\newtheorem{dfn}{Definition}[section]
\newtheorem{thm}[dfn]{Theorem}
\newtheorem{prop}[dfn]{Proposition}
\newtheorem{lem}[dfn]{Lemma}
\newtheorem{rem}[dfn]{Remark}
\newtheorem{ex}[dfn]{Example}
\newcommand{\diag}{{\rm diag}}
\numberwithin{equation}{section}
\begin{document}

\title{On blow-up solutions of differential equations\\ with Poincar\'{e}-type compactifications}

\author{Kaname Matsue\thanks{Institute of Mathematics for Industry, Kyushu University, Fukuoka 819-0395, Japan {\tt kmatsue@imi.kyushu-u.ac.jp}} $^{,}$ \footnote{International Institute for Carbon-Neutral Energy Research (WPI-I$^2$CNER), Kyushu University, Fukuoka 819-0395, Japan}
}
\maketitle

\begin{abstract}
We provide explicit criteria for blow-up solutions of autonomous ordinary differential equations.
Ideas are based on the quasi-homogeneous desingularization (blowing-up) of singularities and compactifications of phase spaces, which suitably desingularize singularities at infinity.
We derive several type of compactifications and show that dynamics at infinity is qualitatively independent of the choice of such compactifications.
We also show that hyperbolic invariant sets, such as equilibria and periodic orbits, at infinity induce blow-up solutions with specific blow-up rates.
In particular, blow-up solutions can be described as trajectories on stable manifolds of equilibria at infinity for associated vector fields.
Finally, we demonstrate blow-up solutions of several differential equations.
\end{abstract}

{\bf Keywords:} Poincar\'{e} compactifications, quasi-homogeneous desingularizations, time-scale desingularizations, stationary and periodic blow-up solutions of ODEs.
\par
\bigskip
{\bf AMS subject classifications : } 34A26, 34C08,  35B44, 35L67, 58K55

\section{Introduction}
The blow-up phenomenon, which describes divergence of solutions in finite time, is one of typical and essential singular behavior in dynamical systems generated by nonlinear differential equations.
There are many mathematical and physical studies of blow-up behaviors, and their concrete behavior such as blow-up profile, blow-up rate and blow-up sets are ones of central issues in studies of blow-up solutions.
\par
The difficulty for studying blow-up solutions is mainly the treatment of {\em infinity} from both mathematical and numerical viewpoints.
Scaling of solutions is often used for detecting the asymptotic profile of blow-up solutions \cite{FM2002}, which is one of the most essential approach of blow-up solutions in the category of {\em bounded} objects.
\par
An alternative way to studying blow-up solutions as those in bounded region is {\em compactification}.
This methodology embeds the original phase spaces, which is often the Euclidean spaces, into compact manifolds possibly with boundaries, and maps dynamics on the original phase space to those on compact manifolds.
Infinity is then mapped into extra points or the boundary of compact manifolds.
Such a treatment enables us to consider divergent solutions including blow-up solutions in terms of bounded solutions on compact manifolds.
Well-known compactifications are Bendixson's one, embedding of $\mathbb{R}^n$ into the unit $n$-sphere $S^n\subset \mathbb{R}^{n+1}$, and Poincar\'{e}'s one, embedding of $\mathbb{R}^n$ into the unit upper hemisphere $\{(x_1, \cdots, x_{n+1})\mid x_{n+1} > 0, \sum_{i=1}^{n+1} x_i^2 = 1\}$ (see e.g., \cite{H}).
In \cite{EG2006}, Elias and Gingold have discussed an admissible class of compactifications including the Poincar\'{e} compactification, which gives an appropriate correspondence of dynamics between on the original phase spaces and on compactified manifolds.
Such compactifications are recently applied to validating blow-up solutions of ordinary differential equations (ODEs for short) with {\em rigorous numerics}, a series of techniques so that all numerical errors are included in all numerical results by, say intervals, and that resulting numerical results contain mathematically rigorous objects \cite{TMSTMO}.
The result opens the door to studying blow-up solutions as {\em finite-time singularities} from the viewpoint of computer-assisted analysis of dynamical systems.
\par
A typical property of admissible compactifications, as well as well-known Bendixson's one, is that all transformations are {\em homogeneous}, namely, $y_i \mapsto y_i/\kappa$ for all $i$ with a positive functional $\kappa = \kappa(y)$.
Such a transformation yields the property that dynamics at infinity is dominated by the highest order term of vector fields, which means that all remaining lower-order terms have no effects at infinity.
This property may drop genuine scaling information of dynamics near infinity.
\par
On the other hand, quasi-homogeneous type compactifications are discussed in e.g., \cite{D2006, DH1999}, which aims at studying periodic orbits and dynamics including infinity related to Hilbert's 16th problem.
The quasi-homogeneous type compactifications are kinds of {\em quasi-homogeneous desingularizations}\footnote
{
In the terminology of algebraic geometry or ordinary singularity theory of dynamical systems, it is often called {\em blow-up}.
However, we shall use the terminology {\em desingularization} here to avoid any confusions to {\em blow-up solutions} of differential equations.
}
near infinity, which effectively desingularize singularities for precise analysis of dynamics around singularities.
However, most of studies using quasi-homogeneous type compactifications have concerned only with desingularized two-dimensional dynamical systems such as {\em Li\'{e}nard} equations, and the aspect of {\em finite-time singularities} or their {\em higher dimensional analogues} are not well-considered.
\par
\bigskip
Our main aim here is to describe blow-up behavior of solutions for vector fields including those which are not necessarily homogeneous near infinity from the viewpoint of dynamical systems.
We treat not only well-known quasi-homogeneous compactifications in e.g., \cite{D2006, DH1999} but also a prototype of the quasi-homogeneous analogue of admissible compactifications in \cite{EG2006}, which we shall call {\em quasi-Poincar\'{e} compactifications}, and unify the aspect of dynamics at infinity obtained in these individual compactifications. 
In other words, we can say that the qualitative information of blow-up behavior is independent of the choice of compactifications.
As primitive results, we state the stationary blow-up and periodic blow-up under generic assumptions, which corresponds to trajectories on stable manifolds of equilibria or periodic orbits at infinity for compactified vector fields, respectively.

\par
\bigskip
The rest of this paper is organized as follows.
In Section \ref{section-QHPoincare}, we introduce a new compactification called quasi-Poincar\'{e} compactification with a brief review of quasi-homogeneous desingularizations.
We also review known quasi-homogeneous type compactifications and discuss the correspondence.
In Section \ref{section-dyn-infty}, we discuss the transformation of dynamics via compactifications.
We obtain desingularized vector fields for individual compactifications, and we prove that these vector fields are topologically equivalent including infinity, which shows that qualitative properties of dynamics at infinity are independent of the choice of certain quasi-homogeneous type compactifications.
In Section \ref{section-blow-up}, we show several generic results for describing blow-up solutions in terms of global trajectories asymptotic to invariant sets at infinity.
We see that convergence of trajectories {\em with exponential rate} and {\em the in-phase property} for hyperbolic equilibria and periodic orbits at infinity induce blow-up solutions with specific asymptotic blow-up behavior.
Several demonstrations of our main results with numerical simulations are shown in Sections \ref{section-Lienard}, \ref{section-sing-shock} and \ref{section-two-fluid}.

\section{Compactifications}
\label{section-QHPoincare}

In this section, we introduce an alternative compactification of admissible ones discussed in e.g., \cite{P, H, EG2006}.
Our compactification is based on the concept of quasi-homogeneous desingularization of singularities in dynamical systems generated by vector-valued functions with an appropriate scaling at infinity.
Firstly we briefly review quasi-homogeneous vector fields.
Secondly, we introduce the new compactification called {\em quasi-Poincar\'{e} compactification} and provide several fundamental properties.
Our compactification is also a higher-dimensional alternative to Poincar\'{e}-Lyapunov discs discussed in e.g., \cite{DH1999, D2006}.

\subsection{Quasi-homogeneous vector fields}
\label{section-QH}

\begin{dfn}[Quasi-homogeneous vector fields, cf. \cite{D1993}]\rm
\label{dfn-QHvector-fields}
Let $f : \mathbb{R}^n \to \mathbb{R}$ be a smooth function.
Let $\alpha_1,\cdots, \alpha_n\geq 0$\footnote{
Quasi-homogeneity is usually defined with $\alpha_1,\cdots, \alpha_n \geq 1$.
But the natural extension to the case $\alpha_i= 0$ still makes sense, and hence we use this generalized one.
} with $(\alpha_1,\cdots, \alpha_n)\not = (0,\cdots, 0)$ be integers and $k \geq 1$.
We say that $f$ is a {\em quasi-homogeneous function of type $(\alpha_1,\cdots, \alpha_n)$ and order $k$} if
\begin{equation*}
f(R^{\alpha_1}x_1,\cdots, R^{\alpha_n}x_n) = R^k f(x_1,\cdots, x_n),\quad \forall x\in \mathbb{R}^n,\quad R\in \mathbb{R}.
\end{equation*}
\par
Next, let $X = \sum_{j=1}^n f_j(x)\frac{\partial }{\partial x_j}$ be a smooth vector field.
We say that $X$, or $f = (f_1,\cdots, f_n)$ is a {\em quasi-homogeneous vector field of type $(\alpha_1,\cdots, \alpha_n)$ and order $k+1$} if each component $f_j$ is a homogeneous function of type $(\alpha_1,\cdots, \alpha_n)$ and order $k + \alpha_j$.
\end{dfn}

For applications to general vector fields, we define the following notion.

\begin{dfn}[Homogeneity index and admissible domain]\rm
Let $\alpha = (\alpha_1,\cdots, \alpha_n)$ be a set of nonnegative integers.
Let the index set $I_\alpha$ as $I_\alpha=\{i\in \{1,\cdots, n\}\mid \alpha_i > 0\}$, which we shall call {\em the set of homogeneity indices associated with $\alpha = (\alpha_1,\cdots, \alpha_n)$}.
Let $U\subset \mathbb{R}^n$.
We say the domain $U\subset \mathbb{R}^n$ {\em admissible with respect to the sequence $\alpha$}
if
\begin{equation*}
U = \{x=(x_1,\cdots, x_n)\in \mathbb{R}^n \mid x_i\in \mathbb{R}\text{ if }i\in I_\alpha,\ (x_{j_1},\cdots, x_{j_{n-l}}) \in \tilde U\},
\end{equation*} 
where $\{j_1, \cdots, j_{n-l}\} = \{1,\cdots, n\}\setminus I_\alpha$ and $\tilde U$ is an $(n-l)$-dimensional open set.
\end{dfn}
Assumptions in Definition \ref{dfn-QHvector-fields} indicate $I_\alpha \not = \emptyset$.
The notion of asymptotic quasi-homogeneity provides a systematic validity of scalings in many practical applications.

\begin{dfn}[Asymptotically quasi-homogeneous vector fields]\rm
Let $f = (f_1,\cdots, f_n):U \to \mathbb{R}^n$ be a smooth function with an admissible domain $U\subset \mathbb{R}^n$ with respect to $\alpha$ such that $f$ is uniformly bounded for each $x_i$ with $i\in I_\alpha$, where $I_\alpha$ is the set of homogeneity indices associated with $\alpha$.
We say that $X = \sum_{j=1}^n f_j(x)\frac{\partial }{\partial x_j}$, or simply $f$ is an {\em asymptotically quasi-homogeneous vector field of type $(\alpha_1,\cdots, \alpha_n)$ and order $k+1$ at infinity} if
\begin{equation*}
\lim_{R\to +\infty} R^{-(k+\alpha_j)}\left\{ f_j(R^{\alpha_1}x_1, \cdots, R^{\alpha_n}x_n) - R^{k+\alpha_j}(f_{\alpha,k})_j(x_1, \cdots, x_n) \right\}
 = 0
 \end{equation*}
holds uniformly for $(x_1,\cdots, x_n)\in U_1$, where $f_{\alpha,k} = ((f_{\alpha,k})_1,\cdots, (f_{\alpha,k})_n)$ is a quasi-homogeneous vector field of type $(\alpha_1,\cdots, \alpha_n)$ and order $k+1$, and
\begin{equation*}
U_1 = \{x=(x_1,\cdots, x_n)\in \mathbb{R}^n \mid (x_{i_1}, \cdots, x_{i_l})\in S^{l-1},\ (x_{j_1},\cdots, x_{j_{n-l}}) \in \tilde U\},
\end{equation*}
where $\{i_1,\cdots, i_l\} = I_\alpha$.
\end{dfn}

\subsection{Quasi-Poincar\'{e} compactifications}
\label{section-definition}
Throughout successive sections, consider the (autonomous) vector field
\begin{equation}
\label{ODE-original}
y' = f(y),
\end{equation}
where $f : U \to \mathbb{R}^n$ be a smooth function with an admissible domain $U\subset \mathbb{R}^n$ with respect to $\alpha$.
Throughout our discussions, we assume that $f$ is an asymptotically quasi-homogeneous vector field of type $\alpha = (\alpha_1,\cdots, \alpha_n)$ and order $k+1 > 1$ at infinity.

\begin{dfn}[Quasi-Poincar\'{e} compactification]\rm
\label{dfn-qP}
Let $a_1,\cdots, a_n \geq 1$, and $\beta_1,\cdots, \beta_n$ be nonnegative numbers such that 
\begin{equation}
\label{LCM}
\begin{cases}
\alpha_i \beta_i \equiv c \in \mathbb{N} & \text{ if $i\in I_\alpha$}, \\
\beta_i = 0 & \text{ otherwise.}
\end{cases}
\end{equation}
Define {\em quasi-Poincar\'{e} functionals} $p(y)$ and $\kappa(y)$ as
\begin{equation*}
p(y) = p_{\alpha, \bf a}(y) := \left( \sum_{i\in I_\alpha}a_i y_i^{2\beta_i} \right)^{1/2c},\quad \kappa(y) = \kappa_{\alpha, {\bf a}}(y) := (1+p_{\alpha, {\bf a}}(y)^{2c})^{1/2c}.
\end{equation*}
where ${\bf a}$ in the subscript denotes the dependence on $\{a_i\}_{i=1}^n$.
Define the {\em quasi-Poincar\'{e} compactification of type $(\alpha_1, \cdots, \alpha_n)$} as
\begin{equation}
\label{coord-qP}
T_{qP} : \mathbb{R}^n \to \mathbb{R}^n,\quad T_{qP}(y) = x,\quad x_i := \frac{y_i}{\kappa_{\alpha, {\bf a}}(y)^{\alpha_i}}.
\end{equation}
\end{dfn}
Obviously, $x_i = y_i$ if $i\not \in I_\alpha$.
We immediately observe that $p_{\alpha,{\bf a}}(x)\to 1$ as $p_{\alpha, {\bf a}}(y)\to \infty$, and vice versa.
Therefore, the infinity in the original coordinates corresponds to a point on
\begin{equation*}
\mathcal{E} = \{x \in U \mid p_{\alpha, {\bf a}}(x) = 1\}.
\end{equation*}
We shall call the set $\mathcal{E}$ {\em the horizon}\footnote{
In the case of $I_\alpha = \{1,\cdots, n\}$, the set $\mathcal{E}$ is often called {\em the equator}, in which case $\mathcal{E} = \partial \mathcal{D}$.
}.
If no confusions arise, we drop the subscripts ${\alpha, {\bf a}}$ in the expression of $p_{\alpha, {\bf a}}$ and $\kappa_{\alpha, {\bf a}}$.
Note that the quasi-Poincar\'{e} functional $\kappa$ in the $x$-coordinate is
\begin{equation*}
\kappa(T_{qP}^{-1}(x)) = \left(1 - \sum_{j\in I_\alpha} a_j x_j^{2\beta_j}\right)^{-1/2c}. 
\end{equation*}

\begin{rem}\rm
The simplest choice of the natural number $c$ is the least common multiple of $\{\alpha_i\}_{i\in I_\alpha}$.
Once we choose such $c$, we can determine the $n$-tuples of natural numbers $\beta_1,\cdots, \beta_n$ uniquely.
The choice of natural numbers in (\ref{LCM}) is essential to desingularize vector fields at infinity, as shown below.
\end{rem}

\begin{dfn}\rm
We say that a solution orbit $y(t)$ of (\ref{ODE-original}) with the maximal existence time $(a,b)$, possibly $a = -\infty$ and $b = +\infty$, {\em tends to infinity in the direction $x_\ast \in \mathcal{E}$ associated with the quasi-Poncar\'{e} functional $p$} (as $t\to a+0$ or $b-0$) if
\begin{equation*}
p(y(t))\to \infty,\quad \left(\frac{y_1}{\kappa(y)^{\alpha_1}}, \cdots, \frac{y_n}{\kappa(y)^{\alpha_n}}\right)\to x_\ast\quad \text{ as }t\to a+0 \text{ or }b-0.
\end{equation*}
\end{dfn}

Now compute the Jacobian matrix $J$ of $T$.
Without the loss of generality, by taking permutations of coordinates if necessary, we may assume that $I_\alpha = \{1,2,\cdots, l\}$.
Direct computations yield
\begin{equation*}
\frac{\partial x_i}{\partial y_j} = 
\begin{cases}
\kappa^{-\alpha_i}\left( \delta_{ij} - \kappa^{-1} \alpha_i y_i \frac{\partial \kappa}{\partial y_j}\right) & \text{$j \in \{1,\cdots,l\}$}\\
\delta_{ij} & \text{$j \in \{l+1,\cdots, n\}$}
\end{cases}
\end{equation*}
with the matrix form
\begin{align*}
&J = \left(\frac{\partial x_i}{\partial y_j}\right)_{i,j = 1,\cdots, n} = A_{\alpha} \left( I_n - \kappa^{-1} y_\alpha (\nabla \kappa)^T\right),\\
&A_\alpha = \diag(\kappa^{-\alpha_1}, \cdots, \kappa^{-\alpha_l}, 1,\cdots, 1),\quad y_\alpha = (\alpha_1 y_1,\cdots, \alpha_l y_l, 0,\cdots, 0)^T.
\end{align*}

We follow arguments in \cite{EG2006}, for any (column) vectors $y,z\in \mathbb{R}^n$, to have 
\begin{align*}
(I_n + \beta yz^T)(I_n + \beta yz^T) &= I + (\beta + \delta)yz^T + \beta \delta yz^T yz^T\\
	&= I + (\beta + \delta + \beta \delta \langle z,y\rangle)yz^T,
\end{align*}
so $I+\delta yz^T = (I+\delta yz^T )^{-1}$ if $\delta = -\beta / (1 + \beta \langle z,y\rangle)$.
\par
In this case, we choose $\beta = -\kappa^{-1}, y = y_\alpha, z = \nabla \kappa$ and have
\begin{equation*}
\left(\frac{\partial y_j}{\partial x_i}\right) = \left(\frac{\partial x_i}{\partial y_j}\right)^{-1} =  \left( I_n - \frac{1}{\kappa - \langle y_\alpha, \nabla \kappa \rangle } y_\alpha (\nabla \kappa)^T\right)A_{\alpha}^{-1}
\end{equation*}
Now we have
\begin{equation*}
\frac{\partial \kappa}{\partial y_j} = \frac{\partial }{\partial y_j} \left(1+\sum_{i=1}^l a_i y_i^{2\beta_i} \right)^{\frac{1}{2c}} = \frac{\beta_j}{c} \left(1+\sum_{i=1}^l a_i y_i^{2\beta_i} \right)^{\frac{1}{2c}-1} a_j y_j^{2\beta_j-1}
= \frac{\beta_j a_j}{c\kappa^{2c-1}} y_j^{2\beta_j-1}
\end{equation*}
if $j\in \{1,\cdots, l\} = I_\alpha$.
Obviously, $\partial \kappa/\partial y_j = 0$ holds if $j \in \{l+1,\cdots, n\} = \{1,\cdots, n\}\setminus I_{\alpha}$.
Hence
\begin{align*}
\kappa^{2c-1}\left(\kappa - \langle y_\alpha, \nabla \kappa \rangle \right) &= \kappa^{2c-1}\left(\kappa - \sum_{j=1}^l \alpha_j y_j \frac{\beta_j a_j}{c\kappa^{2c-1}} y_j^{2\beta_j-1} \right) = \left\{ (1+p(y)^{2c}) - p(y)^{2c}\right\} > 0,
\end{align*}
which indicates that the transformation $T_{qP}$ as well as $T_{qP}^{-1}$ are $C^1$ locally bijective including $y=0$.
On the other hand, the map $T_{qP}$ maps any one-dimensional curve $y = (r^{\alpha_1}v_1, \cdots, r^{\alpha_n}v_n)$, $0\leq r < \infty$, with some fixed direction $v\in \mathbb{R}^n$, into itself. 
For continuous mappings from $\mathbb{R}$ to $\mathbb{R}$, local bijectivity implies global bijectivity.
Consequently, $T_{qP}$ is (globally) bijective.
\par
\bigskip
Summarizing these arguments, we obtain the following proposition.
\begin{prop}
\label{prop-adm}
Let $a_1,\cdots, a_n \geq 1$ be fixed.
Then the associated functional $\kappa = \kappa_{\alpha, {\bf a}}$ defining the quasi-Poincar\'{e} compactificaton $T_{qP}$ satisfies the following properties.
\begin{enumerate}
\item $T_{qP}$ is a bijection from $\mathbb{R}^n$ to $\mathcal{D} = \{x\in \mathbb{R}^n \mid x=T_{qP}(y), y\in U, p(x) < 1\}$.
\item We have
\begin{description}
\item[(A0)] $\kappa(y) > p(y)$ for all $y\in \mathbb{R}^n$,
\item[(A1)] $\kappa(y) \sim p(y)$ as $p(y)\to \infty$\footnote{
\lq\lq $F(\eta) \sim G(\eta)$ as $\eta\to \infty$\rq\rq denotes that $\lim_{\eta\to \infty} F(\eta)/G(\eta) = C\not = 0$.
}.
\item[(A2)] $\nabla \kappa(y) = ((\nabla \kappa(y))_1, \cdots, (\nabla \kappa(y))_n)$ satisfies
\begin{equation*}
(\nabla \kappa(y))_i \sim \frac{a_i}{\alpha_i}\frac{y_i^{2\beta_i-1}}{p(y)^{2c-1}}\quad \text{ as } p(y)\to \infty \text{ if }i\in I_\alpha,\quad (\nabla \kappa(y))_i \equiv 0 \text{ otherwise}.
\end{equation*}
\item[(A3)] Letting $y_\alpha = (\alpha_1 y_1,\cdots, \alpha_n y_n)^T$ for $y\in \mathbb{R}^n$, we have $\langle y_\alpha, \nabla \kappa \rangle = p(y) < \kappa(y)$ holds for any $y\in \mathbb{R}^n$.
\end{description}
\item $T_{qP}(\mathbb{R}^n)$ is extended continuously onto $\overline{\mathcal{D}}$, in particular, onto $\mathcal{E}$.
\end{enumerate}
\end{prop}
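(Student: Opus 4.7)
The plan is to verify properties (A0)--(A3) by direct calculation from $\kappa(y)^{2c}=1+p(y)^{2c}$ and the constraint $\alpha_i\beta_i=c$ on $I_\alpha$. Property (A0) is immediate since $\kappa^{2c}-p^{2c}=1>0$; (A1) follows by dividing to get $(\kappa/p)^{2c}=1+p^{-2c}\to 1$. For (A2), differentiating gives $\partial_{y_j}\kappa=\beta_j a_j y_j^{2\beta_j-1}/(c\kappa^{2c-1})$ for $j\in I_\alpha$, so using $\beta_j/c=1/\alpha_j$ and (A1) yields the asymptotic claimed. For (A3), the identity $\kappa^{2c-1}(\kappa-\langle y_\alpha,\nabla\kappa\rangle)=(1+p^{2c})-p^{2c}=1$ derived in the preceding computation gives $\langle y_\alpha,\nabla\kappa\rangle=\kappa-\kappa^{-(2c-1)}<\kappa$, which is the positivity fact required in the sequel.

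For part 1, the Jacobian computation from the excerpt combined with the rank-one Sherman--Morrison-type inversion shows that $J=\partial x/\partial y$ is invertible precisely when $\kappa-\langle y_\alpha,\nabla\kappa\rangle\neq 0$, which by (A3) holds throughout $\mathbb{R}^n$. Thus $T_{qP}$ is a local $C^1$ diffeomorphism. To upgrade to global bijectivity I will exploit the quasi-radial foliation: after fixing the coordinates $(y_j)_{j\notin I_\alpha}$, each quasi-ray $r\mapsto(r^{\alpha_i}v_i)_{i\in I_\alpha}$ with $p(v)=1$ is mapped by \eqref{coord-qP} into itself, and on it the transformation reduces to a continuous, strictly monotone map of $[0,\infty)$ onto $[0,1)$ whose local bijectivity therefore implies global bijectivity. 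Since these quasi-rays foliate $\mathbb{R}^n$, bijectivity of $T_{qP}$ onto $\mathcal{D}$ follows.

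For part 3, the key computation is
\[
p\bigl(T_{qP}(y)\bigr)^{2c}=\sum_{i\in I_\alpha}a_i\frac{y_i^{2\beta_i}}{\kappa^{2\alpha_i\beta_i}}=\frac{p(y)^{2c}}{\kappa(y)^{2c}}=\frac{p(y)^{2c}}{1+p(y)^{2c}},
\]
which converges to $1$ as $p(y)\to\infty$. Along each quasi-ray this yields $T_{qP}(r^{\alpha_1}v_1,\ldots,r^{\alpha_n}v_n)\to(v_i/p(v)^{\alpha_i})_{i\in I_\alpha}\in\mathcal{E}$ as $r\to\infty$, so $T_{qP}$ extends continuously to the \emph{point at infinity} of each quasi-ray and therefore jointly to $\overline{\mathcal{D}}$. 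The main obstacle I anticipate is not any of the computations but the careful bookkeeping for global injectivity: one must check that every point of $\mathbb{R}^n$ lies on exactly one quasi-ray, handle the trivial components $i\notin I_\alpha$ (where $x_i=y_i$) uniformly, and treat the degenerate directions where some $v_i=0$ for $i\in I_\alpha$ so that the foliation argument really covers $\mathbb{R}^n$ without redundancy.
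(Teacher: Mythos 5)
Your proposal is correct and follows essentially the same route as the paper: direct computation from $\kappa^{2c}=1+p^{2c}$ for (A0)--(A3) (including the key identity $\kappa^{2c-1}(\kappa-\langle y_\alpha,\nabla\kappa\rangle)=1$), the Jacobian plus rank-one inversion for local invertibility, the quasi-ray foliation to upgrade to global bijectivity, and the radial limit computation for the continuous extension to $\overline{\mathcal{D}}$. One small bonus of your calculation: it shows $\langle y_\alpha,\nabla\kappa\rangle=p^{2c}/\kappa^{2c-1}=\kappa-\kappa^{-(2c-1)}$, which establishes the inequality in (A3) actually used later but also reveals that the literal equality $\langle y_\alpha,\nabla\kappa\rangle=p(y)$ in the statement holds only asymptotically as $p(y)\to\infty$.
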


\begin{proof}
1. See discussions above.
\par
2. (A0) and (A1) follow from the definition.
(A2) follows from direct calculations of $\nabla \kappa$ and (A1).
(A3) follows from direct computations.
\par
3. 
For any sequences $\{y_k\}_{k\geq 1}$ which tend to infinity in the direction $x_\ast$, the definition $\lim_{k\to \infty}T_{qP}(y_k) \equiv x_\ast$ makes sense and shows the continuous extension of $T_{qP}(\mathbb{R}^n)$ onto $\mathcal{D}\cup \{x_\ast\}$, since $x_k \equiv T_{qP}(y_k) \to x_\ast$ and $p(y_k)\to \infty$ as $k\to \infty$; namely, $p(x_\ast) = 1$ and $x_\ast\in \overline{\mathcal{D}}$. 
Since each $x_\ast\in \partial \mathcal{D}$ is an accumulation point, there is a sequence $\{x_j = (x_{ij})^T\}_{j\geq 1}$ converging to $x_\ast$.
Letting $y_j \equiv (y_{ij})^T$ with $y_{ij} = \kappa^{\alpha_i}x_{ij}$, $i=1,\cdots, n$, for such a sequence, $\{y_j\}_{j\geq 1}$ tends to infinity in the direction $x_\ast$.
This fact shows that $T_{qP}(\mathbb{R}^n)$ is extended continuously onto $\overline{\mathcal{D}}$ and completes the proof.
\end{proof}

\begin{rem}\rm
Four properties (A0) $\sim $ (A3) in Proposition \ref{prop-adm} will play central roles in the theory of, which will be called, {\em quasi-homogeneous compactifications} and associated dynamics. 
Indeed, in the case of {\em homogeneous} compactifications, namely $\alpha_1 = \cdots = \alpha_n = \beta_1 = \cdots = \beta_n = 1$ and $a_1=\cdots = a_n = 1$, these conditions describe {\em admissibility} of compactifications \cite{EG2006}, which play central roles to dynamics at infinity.
The (homogeneous) Poincar\'{e} compactification (e.g., \cite{H, P}) is the prototype of other admissible compacifications such as parabolic ones (e.g., \cite{EG2006, TMSTMO}), and hence quasi-Poincar\'{e} compactifications with Proposition \ref{prop-adm} will be the prototype of compactifications with quasi-homogeneous desingularizations.
\end{rem}

\par
As the homogeneous version, quasi-Poincar\'{e} compactifications have the following geometric aspect.
First, regard the original phase space $\mathbb{R}^n$ as the subspace $\mathbb{R}^n \times \{1\}$ in $\mathbb{R}^{n+1}$.
For any points $M=(y,1)\in \mathbb{R}^n \times \{1\}$, there is one-to-one correspondence between $M$ and the point on the quasi-hemisphere
\begin{equation*}
\mathcal{H}_\alpha = \{(x,\zeta)\in \mathbb{R}^{n+1} \mid \zeta > 0,\ p(x)^{2c} + \zeta^{2c} = 1\},
\end{equation*}
as the intersection of $\mathcal{H}$ and the curve
\begin{equation}
\label{curve}
C_\alpha: \mathbb{R}^{n+1}\to \mathbb{R}^{n+1},\quad C_\alpha(y,\zeta) = \left(\zeta^{\alpha_1}y_1,\cdots, \zeta^{\alpha_n}y_n, \zeta \right)
\end{equation}
with endpoints $(0,0)\in \mathbb{R}^{n+1}$ and $M$.
The intersection is given by the point $C_\alpha(y, \zeta)$ on the curve $C_\alpha$ satisfying
\begin{equation*}
\sum_{i=1}^n (\zeta^{\alpha_i} a_i y_i)^{2\beta_i} + \zeta^{2c} = \zeta^{2c}\left(\sum_{i=1}^n a_i y_i^{2\beta_i} + 1\right) = 1.
\end{equation*}
The explicit representation of the point is
\begin{equation*}
x = \left(\zeta^{\alpha_1}y_1, \cdots, \zeta^{\alpha_n}y_n\right) \equiv \left(\frac{y_1}{\kappa(y)^{\alpha_1}}, \cdots, \frac{y_n}{\kappa(y)^{\alpha_n}} \right),\quad \zeta = \frac{1}{(1+p(y)^{2c})^{1/2c}} \equiv \frac{1}{\kappa(y)}.
\end{equation*}
The quasi-Poincar\'{e} compactification $T_{qP}$ is thus given by the projection of the above intersection point onto $\mathbb{R}^n$.
This geometric representation of $T_{qP}$ gives its bijectivity stated in Proposition \ref{prop-adm}.

\begin{rem}\rm
In the case of (homogeneous) Poincar\'{e} and other admissible, homogeneous compactifications (\cite{EG2006, H, P}), the curve $C_\alpha$ is given by the line segment with endpoints $(0,0), (y,1)\in \mathbb{R}^{n+1}$.
See Figure \ref{fig-quasi-poincare}.
\end{rem}

\begin{figure}[htbp]\em
\begin{minipage}{0.5\hsize}
\centering
\includegraphics[width=7.0cm]{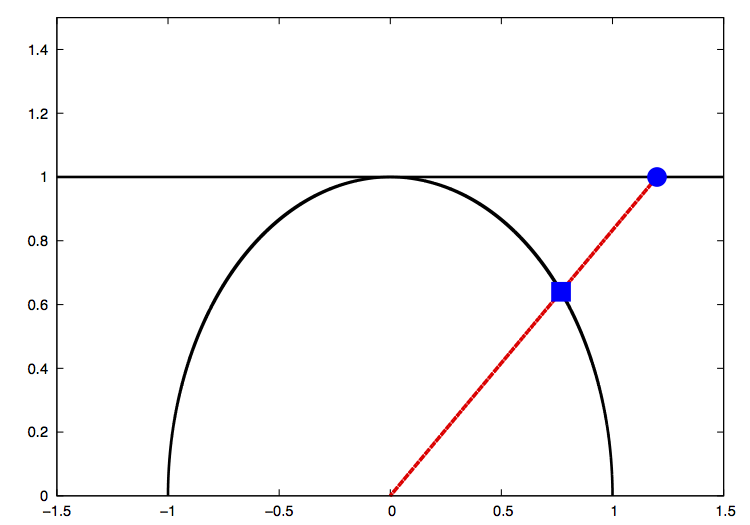}
(a)
\end{minipage}
\begin{minipage}{0.5\hsize}
\centering
\includegraphics[width=7.0cm]{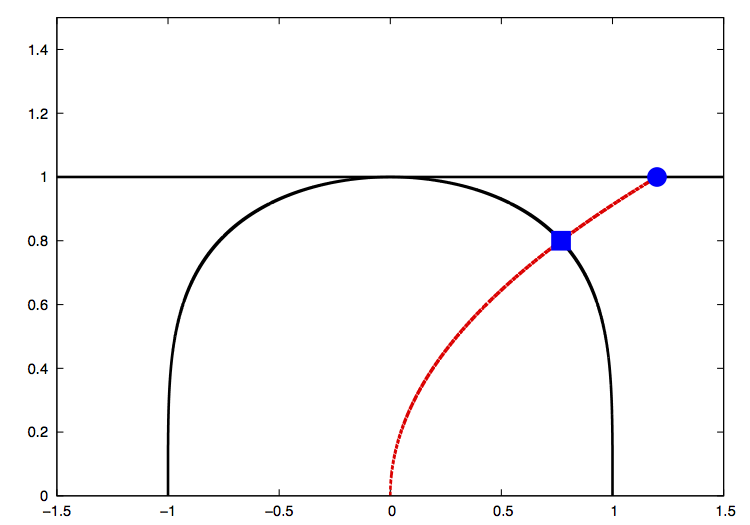}
(b)
\end{minipage}
\caption{Poincar\'{e} and quasi-Poincar\'{e} compactifications with type $(2,1)$}
\label{fig-quasi-poincare}
Both figures describe the slice $\{y_2=0\}$ of surfaces (a) : $\mathcal{H} = \{(y_1,y_2,\zeta)\mid \zeta > 0,\ y_1^2 + y_2^2 + \zeta^2 = 1\}$, and (b) : $\mathcal{H}_\alpha = \{(y_1,y_2,\zeta)\mid \zeta > 0,\ y_1^2 + y_2^4 + \zeta^4 = 1\}$.
\par
(a) : Poincar\'{e} compactification.
(b) : Quasi-Poincar\'{e} compactification with type $(2,1)$.
\par
In both figures, the black curves describe the original phase space $\mathbb{R}^n\times \{1\}\subset \mathbb{R}^{n+1}$ (straight line) and the quasi-hemispheres $H_\alpha$ (curve).
In the case of (a), the type $\alpha$ is chosen to be $(1,1)$.
Blue round points show the points in the original phase space and squared points show the intersection between $H_\alpha$ and the curve $C_\alpha$ defined by (\ref{curve}) colored by red.
The projection of squared points onto the original phase space (black straight line) are the images of (quasi-)Poincar\'{e} compactifications.
\end{figure}

\subsection{Directional and intermediate compactifications}
\label{section-coord}

There are several other coordinates which are used in preceding works (e.g., \cite{DH1999, D2006}) for studying dynamics at infinity, many of which are considered {\em locally} near infinity.
In this section, we discuss such compactifications and compare with quasi-Poincar\'{e} compactifications.

\begin{dfn}[Directional compactification]\rm
\label{dfn-directional}
Let the type $\alpha = (\alpha_1, \cdots, \alpha_n)\in \mathbb{Z}^n_{\geq 0} \setminus \{(0,\cdots, 0)\}$ be fixed.
Define a {\em directional compactification of type $\alpha$} as the transformation $T_{\bf h}:(y_1,\cdots, y_n)\mapsto (s,\theta_1,\cdots, \theta_{n-1})$ given by
\begin{equation}
\label{dfn-dir-cpt}
(y_1,\cdots, y_n) = \left(\frac{h_1(\theta_1,\cdots, \theta_{n-1})}{s^{\alpha_1}}, \cdots, \frac{h_n(\theta_1,\cdots, \theta_{n-1})}{s^{\alpha_n}}\right),
\end{equation}
where ${\bf h} = (h_1,\cdots, h_n)$ be functions defined on an $(n-1)$-dimensional (not necessarily compact) smooth manifold $M$\footnote{
The $M$ is usually assumed to be $\mathbb{R}^{n-1}$, $S^{n-1}$, $T^{n-1}$ or an open subset of them.
} parameterized by $(\theta_1,\cdots, \theta_{n-1})$ such that
\begin{description}
\item[(Dir1)] the set $E = \bigcup_{s\geq 0}\left(\{s\} \times {\bf h}(M) \right)$ forms a fiber bundle over $[0,\infty)$ whose fiber $\{s\} \times {\bf h}(M)$ is diffeomorphic to $M$.
Moreover, there is an open set $U\subset \mathbb{R}^n$ such that $T_{\bf h}$ maps $U$ into $ \bigcup_{s> 0}\left(\{s\} \times {\bf h}(M) \right)$ diffeomorphically.
\item[(Dir2)] $\sum_{i\in I_\alpha} a_i h_i(\theta_1,\cdots, \theta_{n-1})^{2\beta_i} \geq c_h > 0$ for all $\theta_1, \cdots, \theta_{n-1}$;
\item[(Dir3)] there is a smooth function $e$ such that $s = e(y)$ and that $e(y) \sim \kappa(y)^{-1}$ as $s\to 0$, which is locally uniform in $M$;
\item[(Dir4)] the matrix
\begin{equation}
\label{A-theta}
A = A(\theta_1,\cdots, \theta_{n-1}) := \begin{pmatrix}
\alpha_1 h_1 & \frac{\partial h_1}{\partial \theta_1} & \cdots & \frac{\partial h_1}{\partial \theta_{n-1}} \\
\alpha_2 h_2 & \frac{\partial h_2}{\partial \theta_1} & \cdots & \frac{\partial h_n}{\partial \theta_{n-1}} \\
\vdots & \cdots & \cdots & \vdots\\
\alpha_n h_n & \frac{\partial h_n}{\partial \theta_1} & \cdots & \frac{\partial h_n}{\partial \theta_{n-1}}
\end{pmatrix}
\end{equation}
is invertible and smooth on $M$.
\end{description}
We shall call the hypersurface $\mathcal{E}=\{(s,\theta_1,\cdots,\theta_{n-1})\mid s=0 \}$ {\em the horizon}.
\end{dfn}

The following examples show that several well-known quasi-homogeneous type compactifications are included in the above notion.

\begin{ex}\rm
Compactifications of quasi-homogeneous type for studying dynamics at infinity are applied to e.g., polynomial Li\'{e}nard equations of type $(m,n)\in \mathbb{N}^2$ (e.g., \cite{DH1999, D2006}):
\begin{equation}
\label{Lienard}
\begin{cases}
\dot x = y, & \\
\dot y = -(\epsilon x^m + \sum_{k=0}^{m-1}a_k x^k) - y(x^n + \sum_{k=0}^{n-1}b_k x^k), & 
\end{cases}
\end{equation}
where $\epsilon = \pm 1$ if $m\not = 2n+1$, and $\epsilon \in \mathbb{R}\setminus \{0\}$ if $m=2n+1$.
Dynamics of (\ref{Lienard}) near infinity is considered via the transformations such as
\begin{equation}
\label{loc-coord-PL}
(x,y)\mapsto (s,u),\quad x = \pm 1/s,\quad y = u / s^{n+1},
\end{equation}
which defines the vector field in $(s,u)$-coordinates. 
The whole domains of $(s,u)$-coordinates are often called {\em Poincar\'{e}-Lyapunov disks}, or {\em PL-disks} for short.
Infinity corresponds to $\{s=0\}$, and the other component $u$ locally determines the direction of infinity.
The power of $s$ depends on the choice of type $(m,n)$, which corresponds to the choice of $\{\beta_i\}_{i=1}^n$ associated with $\{\alpha_i\}_{i=1}^n$ in quasi-Poincar\'{e} compactifications.
The signature $\pm$ in (\ref{loc-coord-PL}) corresponds to transformations in {\em positive or negative} $x$-directions.
In other words, the set $U$ in Definition \ref{dfn-directional} is chosen as $\{x>0\}$ or $\{x<0\}$ corresponding to the signature in (\ref{loc-coord-PL}).
This fact indicates that we need multiple charts on PL-disks for complete studies of dynamics including infinity.
On the other hand, quasi-Poincar\'{e} compactifications require {\em only one chart} for various treatments; namely, the coordinate $(x_1,\cdots, x_n)$ in (\ref{coord-qP}) determines the {\em global chart} on $\overline{\mathcal{D}}$.
\par
\bigskip
The change of coordinates (\ref{loc-coord-PL}) is also regarded as local coordinate system of quasi-Poincar\'{e} compactifications.
Indeed, consider the curve $C_\alpha\subset \mathbb{R}^{n+1}$ given in (\ref{curve}).
To be simplified, assume that $I_{\alpha} = \{1,\cdots, n\}$.
Then we obtain a transformation given by
\begin{equation}
\label{intersection}
(y_1,\cdots, y_n, \kappa(y)) = \left(s^{-\alpha_1} x_1, \cdots, s^{-\alpha_{i-1}} x_{i-1}, \pm s^{-\alpha_i},  s^{-\alpha_{i+1}} x_{i+1}, \cdots, s^{-\alpha_n} x_n, s^{-1} \right),
\end{equation}
which attains the intersection of $C_\alpha$ and the $n$-dimensional upper-half hyperplane given by $\{x_i = \pm 1, s\geq 0\}$ centered at $p\in \partial \mathcal{H}_\alpha$ with $p = (x,0)\in \mathbb{R}^{n+1}$, where $x_j = \pm \delta_{ij}$ for $j=1,\cdots, n$.
The upper-half hyperplane is exactly the (upper-half) tangent space $T_p \overline{\mathcal{H}_\alpha}$ of $\overline{\mathcal{H}_\alpha}$ at $p$, and hence the transformation (\ref{intersection}) provides the local coordinate $(x_1,\cdots, x_{i-1}, s, x_{i+1}, \cdots, x_n)$ on $T_p \overline{\mathcal{H}_\alpha}$.
The local coordinate can be considered as a higher-dimensional analogue of (\ref{loc-coord-PL}).
\end{ex}

\begin{ex}\rm
Consider the case $I_\alpha \not = \{1,\cdots, n\}$; namely, $\alpha_i = 0$ for some $i$.
In such a case, the $i$-th component of $T_{\bf h}$ should be identity and hence $y_i$ is chosen as one of variables $\theta_1,\cdots,\theta_{n-1}$.
For example, consider directional compactification of type $\alpha = (1,2,0)$ in $\mathbb{R}^3$ as in (\ref{loc-coord-PL}).
Typical one would be $(y_1, y_2, y_3) = (1/s, u/s^2, y_3)$, in which case the paraemterization of $M=\mathbb{R}^2$ is $(\theta_1, \theta_2) = (u,y_3)$ and functions $\{h_i\}$ are given by $h_1 = 1, h_2 = u, h_3 = y_3$.
In particular, the characterization of directional compactifications still makes sense for $\alpha_i = 0$.
\end{ex}

\begin{ex}\rm
In two-dimensional problems, there is an alternative choice of coordinates using {\em $(1,l)$-trigonometric functions}, which is often called {\em quasi-polar coordinates}.
Let ${\rm Cs}\theta$ and ${\rm Sn}\theta$ (see e.g. \cite{DH1999, D2006} and references therein for detailed properties) be analytic functions given by the solutions of the following Cauchy problem:
\begin{equation*}
\frac{d}{d\theta}{\rm Cs}\theta = -{\rm Sn}\theta,\quad \frac{d}{d\theta}{\rm Sn}\theta = {\rm Cs}^{2l-1}\theta,\quad 
\begin{cases}
{\rm Cs}0 = 1 &\\
{\rm Sn}0 = 0 &
\end{cases}.
\end{equation*}
These functions satisfy 
\begin{equation}
\label{trigonometric}
{\rm Cs}^{2l}\theta + l {\rm Sn}^{2}\theta = 1\quad \text{ for all } \theta,
\end{equation}
and both ${\rm Cs}\theta$ and ${\rm Sn}\theta$ are $T$-periodic with
\begin{equation*}
T = T_{1,l} = \frac{2}{\sqrt{2}} \int_0^1 (1-t)^{-1/2}t^{(1-2l)/2l}.
\end{equation*}
The $(1,l)$-quasi-polar coordinate $(r,\theta)$ is given by
\begin{equation}
\label{coord-polar}
y_1 = \frac{{\rm Cs}\theta}{s},\quad y_2 = \frac{{\rm Sn}\theta}{s^l},
\end{equation}
which follows from (\ref{trigonometric}) that $y_1^{2l} + l y_2^2 = s^{-2l}$.
In particular, the parameterization surface $M$ is $S^1$, $\theta_1 = \theta$, $h_1 = {\rm Cs}\theta$ and $h_2 = {\rm Sn}\theta$.
The quasi-polar coordinate $(s,\theta)$ makes sense except the origin $(y_1, y_2) = (0,0)$ in the original coordinate.
In other words, the set $U$ in Definition \ref{dfn-directional} is chosen as $\{(y_1, y_2) \not = (0,0)\}$.
\end{ex}

The following compactification is an auxiliary one, which connects dynamics via quasi-Poincar\'{e} and directional compactifications.

\begin{dfn}[Intermediate compactification]\rm
\label{dfn-intermediate}
Let the type $\alpha = (\alpha_1, \cdots, \alpha_n)$ be fixed so that $\alpha_i \geq 0$ and $\alpha \not = (0,\cdots, 0)$.
Define an {\em intermediate compactification $T_{{\bf h}, int}$ of type $\alpha$ associated with the directional compactification $T_{\bf h}$} as the transformation
\begin{equation}
\label{dfn-inter-cpt}
(y_1,\cdots, y_n) = \left(\frac{h_1(\theta_1,\cdots, \theta_{n-1})}{R^{\alpha_1/2c}}, \cdots, \frac{h_n(\theta_1,\cdots, \theta_{n-1})}{R^{\alpha_n/2c}}\right),
\end{equation}
where ${\bf h} = (h_1,\cdots, h_n)$ is the diffeomorphic functions determining $T_{\bf h}$, such that (Dir1) - (Dir4) in Definition \ref{dfn-directional} replacing $s$ in (Dir1) and \lq\lq $s=e(y)$, $e(y)\sim \kappa(y)^{-1}$ \rq\rq in (Dir3) by $R$ and  \lq\lq $R=e(y)$, $e(y)\sim \kappa(y)^{-2c}$ \rq\rq, respectively, are satisfied.
\end{dfn}

Observe that the difference between $T_{\bf h}$ and $T_{{\bf h}, int}$ is only the magnitude in directional variable; $s$ and $R$.
As seen below, the intermediate compactification induce a trivial equivalence between dynamical systems via $T_{\bf h}$ and $T_{{\bf h}, int}$ as well as a nontrivial equivalence between dynamical systems via $T_{qP}$ and $T_{{\bf h}, int}$.

\subsection{Correspondence between compactifications}
Now we have three types of compactifications.
It is desirable that these compactifications are transformed (at least) homeomorphically so that trajectories of dynamical systems in individual compactifications correspond homeomorphically.
In this section, we discuss relationship between the above compactifications.
\par
Let $T_{qP}$ be the quasi-Poincar\'{e} compactification, $T_{\bf h}$ be the directional compactification and $T_{{\bf h}, int}$ be the intermediate compactification associated with $T_{\bf h}$.
Assume that the type $\alpha$ of these compactifications and a sequence of positive numbers $\{a_i\}_{i=1}^n$ are identical.
Then, by definition we have
\begin{equation*}
y_i = \kappa(y)^{\alpha_i}x_i = \frac{h_i(\theta_1,\cdots, \theta_{n-1})}{s^{\alpha_i}},
\end{equation*}
to obtain
\begin{equation*}
p(y)^{2c} = \sum_{i\in I_\alpha} a_i y_i^{2\beta_i} = \kappa^{2c}\sum_{i\in I_\alpha} a_i x_i^{2\beta_i} = s^{-2c}\sum_{i\in I_\alpha} h_i^{2\beta_i}
\end{equation*}
in the domain of $T_{\bf h}^{-1}$.
For $s > 0$, equivalently, for $x$ with $\sum_{i\in I_\alpha} a_i x_i^{2\beta_i} < 1$ (from Definition \ref{dfn-dir-cpt}),
\begin{align*}
&\left(1-\sum_{i\in I_\alpha} a_i x_i^{2\beta_i} \right)\sum_{i\in I_\alpha} h_i^{2\beta_i} = s^{2c}\sum_{i\in I_\alpha} a_i x_i^{2\beta_i}\\
&\Leftrightarrow \left(1-\sum_{i\in I_\alpha} a_i x_i^{2\beta_i} \right) = \frac{s^{2c}}{s^{2c} + \sum_{i\in I_\alpha} a_i h_i^{2\beta_i}}\\
&\Leftrightarrow \kappa = s^{-1}\left( s^{2c} + \sum_{i\in I_\alpha} a_i h_i^{2\beta_i} \right)^{1/2c}.
\end{align*}
Therefore we obtain
\begin{equation}
\label{x-by-h}
x_i = \frac{y_i}{\kappa^{\alpha_i}} = h_i\left( s^{2c} + \sum_{i\in I_\alpha} a_i h_i^{2\beta_i} \right)^{-\alpha_i/2c}.
\end{equation}
Similarly, for $\sum_{i\in I_\alpha} a_i x_i^{2\beta_i} \in (0,1)$, we have
\begin{equation}
\label{h-by-x}
h_i = s^{\alpha_i}\kappa(y)^{\alpha_i} x_i = (e(y)\kappa(y))^{\alpha_i} x_i.
\end{equation}
Since (\ref{x-by-h}) is given by the composite $T_{qP}\circ T_{\bf h}^{-1}$, $T_{qP}$ is diffeomorphic on $\mathbb{R}^n$ and $T_{\bf h}$ is diffeomorphic on $\{s>0\}\times M$, then $T_{qP}\circ T_{\bf h}^{-1}$ is a diffeomorphism from $\{s>0\}\times M$ onto $T_{qP}\circ T_{\bf h}^{-1}(\{s>0\}\times M)$.
\par
By the assumption $\sum_{i\in I_\alpha} a_i h_i^{2\beta_i} \geq c_h > 0$, then the expression (\ref{x-by-h}) can be continuously extended on the horizon $\{s=0\}$.
Since $x=(x_1,\cdots, x_n)$ in (\ref{x-by-h}) is the projection of the intersection point of the curve connecting the origin $(0,\cdots, 0)\in \mathbb{R}^{n+1}$ and
\begin{equation*}
(s,\theta_1,\cdots, \theta_{n-1})\mapsto \left(\frac{h_1(\theta_1,\cdots, \theta_{n-1})}{s^{\alpha_1}}, \cdots, \frac{h_n(\theta_1,\cdots, \theta_{n-1})}{s^{\alpha_n}}, s \right)
\end{equation*}
and the quasi-hemisphere $\{p(x)^{2c} + s^{2c} = 1\}$ on $\mathbb{R}^n$, then it is determined uniquely, which implies that the mapping $T_{qP}\circ T_{\bf h}^{-1}$ is injective on $\{s=0\}$.
\par
Similarly, by the definition of $e$, the term $e(y)\kappa(y)$ is bounded locally uniformly as $p(y)\to \infty$.
Therefore the expression (\ref{h-by-x}) can be continuously extended on the horizon $\{p(x) = 1\}$.

\begin{prop}
\label{prop-diffeo-change-of-coordinates}
Let $\alpha = (\alpha_1,\cdots, \alpha_n)\in \mathbb{Z}^n_{\geq 0}\setminus \{(0,\cdots, 0)\}$ be given.
Then the change of coordinate
\begin{equation}
\label{change-of-coordinate}
C_{{\bf h},int \to qP}: (R,\theta_1,\cdots, \theta_{n-1}) \mapsto (x_1,\cdots, x_n)
\end{equation}
is locally diffeomorphic including $R=0$, where $(x_1,\cdots, x_n) = T_{qP}(y_1,\cdots, y_n)$ is the coordinate determined by the quasi-Poincar\'{e} compactification $T_{qP}$  of type $\alpha$, and 
$(R,\theta_1,\cdots, \theta_{n-1}) = T_{{\bf h},int}(y_1,\cdots, y_n)$ is the coordinate determined by the intermediate compactification $T_{int}$ of type $\alpha$.
\end{prop}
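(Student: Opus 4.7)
The plan is to write the change of coordinates in closed form, observe that it extends smoothly through the horizon $\{R=0\}$, and verify invertibility of the differential on $\{R=0\}$ by reducing the resulting determinant to $\det A$ from condition (Dir4).

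First I would record the explicit formula. Combining the intermediate parametrization $y_i = h_i(\theta)/R^{\alpha_i/2c}$ with the identity $\kappa^{-2c} = R/(R+F(\theta))$, where
\begin{equation*}
F(\theta) := \sum_{j\in I_\alpha} a_j h_j(\theta)^{2\beta_j},
\end{equation*}
which follows from the derivation leading to (\ref{x-by-h}) after substituting $R=s^{2c}$, one obtains
\begin{equation*}
C_{\mathbf{h},int\to qP}(R,\theta) = \bigl(x_1,\ldots,x_n\bigr),\qquad x_i = h_i(\theta)\bigl(R+F(\theta)\bigr)^{-\alpha_i/2c}.
\end{equation*}
By hypothesis (Dir2) we have $F(\theta)\geq c_h>0$, so the base $R+F(\theta)$ is bounded away from zero on a neighborhood of any $(0,\theta_0)$. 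Hence $C_{\mathbf{h},int\to qP}$ is $C^\infty$ in $(R,\theta)$ at $R=0$.

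Next, for $R>0$, the map factors as $T_{qP}\circ T_{\mathbf{h},int}^{-1}$, and both factors are diffeomorphisms on their respective domains (by Proposition \ref{prop-adm} and (Dir1)), so the composition is a diffeomorphism there. It remains to handle the horizon. By the inverse function theorem it suffices to show $\det DC_{\mathbf{h},int\to qP}(0,\theta_0)\neq 0$ for each $\theta_0\in M$.

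The main computation is therefore the Jacobian at $R=0$. Factoring $F^{-\alpha_i/2c-1}$ from row $i$ reduces the problem to the determinant of the matrix $M$ with entries
\begin{equation*}
M_{i,1} = -\tfrac{\alpha_i h_i}{2c},\qquad M_{i,k+1} = F\,\tfrac{\partial h_i}{\partial\theta_k} + M_{i,1}\tfrac{\partial F}{\partial\theta_k}\quad (k=1,\dots,n-1).
\end{equation*}
Performing the column operations $\text{col}_{k+1}\mapsto \text{col}_{k+1} - \partial_{\theta_k}F\cdot \text{col}_1$ eliminates the second term in each column $k+1$, leaving $F\,\partial h_i/\partial\theta_k$. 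Pulling out $-1/(2c)$ from the first column and $F$ from each of the remaining $n-1$ columns yields
\begin{equation*}
\det M \;=\; -\tfrac{F^{n-1}}{2c}\,\det A(\theta_0),
\end{equation*}
where $A$ is the matrix (\ref{A-theta}) in (Dir4). Since $F>0$ and $A$ is invertible by (Dir4), the full Jacobian determinant $\prod_i F^{-\alpha_i/2c-1}\cdot \det M$ is nonzero, and the inverse function theorem completes the proof. The main obstacle is carrying out this Jacobian reduction cleanly; everything else is a direct consequence of the formula for $x_i$ together with (Dir2)–(Dir4).
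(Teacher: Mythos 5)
Your proposal is correct and follows essentially the same route as the paper: derive the closed form $x_i = h_i(\theta)(R+H)^{-\alpha_i/2c}$ with $H=\sum_{j\in I_\alpha}a_jh_j^{2\beta_j}$, compute the Jacobian, and reduce its invertibility to that of the matrix $A$ in (\ref{A-theta}) via elementary column operations (the paper phrases this as a factorization through the matrices $P_k$ and $\tilde P$, i.e.\ Gaussian elimination, which is the same manipulation), then invoke the inverse function theorem using (Dir2) and (Dir4). The only cosmetic difference is that you verify the determinant condition only at $R=0$ and dispose of $R>0$ by composing diffeomorphisms, while the paper's single Jacobian computation covers all $R\geq 0$.
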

\begin{proof}
First we give the mapping (\ref{change-of-coordinate}) explicitly.
It immediately holds that
\begin{equation*}
y_i = \left(1-\sum_{i\in I_\alpha}a_i x_i^{2\beta_i}\right)^{-\alpha_i/2c} x_i = R^{-\alpha_i/2c} h_i(\theta_1,\cdots, \theta_{n-1}),
\end{equation*}
which yields
\begin{equation*}
\kappa(y)^{2c} = \left(1-\sum_{i\in I_\alpha}a_i x_i^{2\beta_i}\right)^{-1} \sum_{i\in I_\alpha}a_i x_i^{2\beta_i} = R^{-1} \sum_{i\in I_\alpha}a_i h_i(\theta_1,\cdots,\theta_{n-1})^{2\beta_i}.
\end{equation*}
Let $X := \sum_{i\in I_\alpha}a_i x_i^{2\beta_i}$ and $H = \sum_{i\in I_\alpha}a_i h_i(\theta_1,\cdots,\theta_{n-1})^{2\beta_i}$.
The above equality yields
\begin{equation*}
\frac{X}{1-X} = R^{-1} H \quad \Leftrightarrow \quad X = \frac{H}{R+H} \text{ and }1-X = \frac{R}{R+H}.
\end{equation*}
This expression makes sense even for $R=0$, equivalently $X=1$.
Then $C_{{\bf h},int \to qP}$ is represented by
\begin{equation*}
x_i = \frac{h_i(\theta_1,\cdots, \theta_{n-1})}{R+H}.
\end{equation*}
Our claim here is then that the Jacobian matrix of $C_{{\bf h},int \to qP}$ is invertible and continuous.
Assume that $i\in I_{\alpha}$.
Direct computations yield
\begin{align*}
\frac{\partial x_i}{\partial R} &= -\frac{\alpha_i}{2c}(R+H)^{-\frac{\alpha_i}{2c}-1}h_i,\\
\frac{\partial x_i}{\partial \theta_j} &= -\frac{\partial h_i}{\partial \theta_j}(R+H)^{-\frac{\alpha_i}{2c}} -\frac{\alpha_i}{2c}(R+H)^{-\frac{\alpha_i}{2c}-1}h_i \tilde H_j,
\end{align*}
where $\tilde H_j = \sum_{k\in I_\alpha} 2a_k \beta_k h_k^{2\beta_k-1} \frac{\partial h_k}{\partial \theta_j}$.
Note that these expressions still make sense for $i\not \in I_\alpha$; namely, $\alpha_i = 0$.
Thus the Jacobian matrix $JC_{{\bf h},int \to qP}$ at $(R,\theta_1,\cdots, \theta_{n-1})$ is
\begin{align*}
JC_{{\bf h},int \to qP} &= 
\begin{pmatrix}
\frac{\partial x_1}{\partial R} & \frac{\partial x_1}{\partial \theta_1} & \cdots & \frac{\partial x_1}{\partial \theta_{n-1}}\\
\frac{\partial x_2}{\partial R} & \frac{\partial x_2}{\partial \theta_1} & \cdots & \frac{\partial x_2}{\partial \theta_{n-1}}\\
\vdots & \vdots & \vdots & \vdots \\
\frac{\partial x_n}{\partial R} & \frac{\partial x_n}{\partial \theta_1} & \cdots & \frac{\partial x_n}{\partial \theta_{n-1}} 
\end{pmatrix}\\
&=
D \begin{pmatrix}
-\frac{\alpha_1}{2c}(R+H)^{-1}h_1 & -\frac{\partial h_1}{\partial \theta_1} -\frac{\alpha_1}{2c}(R+H)^{-1}h_1 \tilde H_1 & \cdots & -\frac{\partial h_1}{\partial \theta_{n-1}} -\frac{\alpha_1}{2c}(R+H)^{-1}h_1 \tilde H_{n-1}\\
-\frac{\alpha_2}{2c}(R+H)^{-1}h_2 & -\frac{\partial h_2}{\partial \theta_1} -\frac{\alpha_2}{2c}(R+H)^{-1}h_2 \tilde H_1 & \cdots & -\frac{\partial h_2}{\partial \theta_{n-1}} -\frac{\alpha_2}{2c}(R+H)^{-1}h_2 \tilde H_{n-1}\\
\vdots & \vdots & \vdots & \vdots \\
-\frac{\alpha_n}{2c}(R+H)^{-1}h_n & -\frac{\partial h_n}{\partial \theta_1} -\frac{\alpha_n}{2c}(R+H)^{-1}h_n \tilde H_1 & \cdots & -\frac{\partial h_n}{\partial \theta_{n-1}} -\frac{\alpha_n}{2c}(R+H)^{-1}h_n \tilde H_{n-1}\\
\end{pmatrix}\\
&\equiv D\tilde A,
\end{align*}
where 
\begin{equation*}
D = \begin{pmatrix}
(R+H)^{-\alpha_1/2c} & 0 & \cdots & 0\\
0 & (R+H)^{-\alpha_2/2c} & \cdots & 0\\
\vdots & \vdots & \vdots & \vdots\\
0 & 0 & \cdots & (R+H)^{-\alpha_n/2c}
\end{pmatrix}.
\end{equation*}
The diagonal matrix $D$ is nonsingular for $R\geq 0$, since $H \geq c_h > 0$ by (Dir2) in Definition \ref{dfn-directional}.
It is thus sufficient to show that $\tilde A$ is invertible and continuous to our claim. 
The matrix $\tilde A$ is also represented as
\begin{equation*}
A = \tilde A P_{n-1} \cdots P_1 \tilde P, 
\end{equation*}
where $A = A(\theta_1,\cdots, \theta_{n-1})$ is the matrix in (\ref{A-theta}) and 
\begin{equation*}
(P_k)_{ij} = \begin{cases}
1 & \text{if $i=j$}\\
-\tilde H_k & \text{if $i=1$ and $j=k+1$}\\
0 & \text{otherwise}
\end{cases}\quad \text{ and }\quad (\tilde P)_{ij} = -(2c(R+H)\delta_{i1})\delta_{ij}.
\end{equation*}
All matrices $P_1,\cdots, P_{n-1}$ and $\tilde P$ are invertible\footnote
{
This transformation is nothing but the Gaussian elimination for the squared matrix $\tilde A$.
}, and hence the invertibility of $\tilde A$ is equivalent to that of $A$.
By assumption (Dir4) in Definition \ref{dfn-directional}, $A$ is invertible and hence the Jacobian matrix $JC_{{\bf h},int \to qP}$ is invertible for any points on $\{R\geq 0\}\times M$.
As a consequence, thank to Inverse Mapping Theorem, the mapping $C_{{\bf h},int \to qP}$ is locally diffeomorphic.
\end{proof}

\section{Compactifications and dynamics at infinity}
\label{section-dyn-infty}

Now we have the expression of infinity via several type of compactifications.
In this section, we consider the vector field corresponding to (\ref{ODE-original}) in the coordinate of compactified spaces.

\subsection{Desingularized vector fields for quasi-Poincar\'{e} compactifications}
First we calculate the vector field (\ref{ODE-original}) after the quasi-Poincar\'{e} compactification $T_{qP}$.
Integers $\{\beta_i\}_{i=1}^n$ and $c$ in the definition of $T_{qP}$ are assumed to satisfy (\ref{LCM}).
Differentiating $x = T_{qP}(y)$ with respect to $t$, we have 

\begin{align*}
x_i' &= y_i' = f_i(y) \quad \text{ if }i\not \in I_\alpha,\\
x_i' &= \left(\frac{y_i}{\kappa^{\alpha_i}}\right)' = \frac{y_i'}{\kappa^{\alpha_i}} -  \frac{\alpha_i y_i \kappa^{\alpha_i-1}}{\kappa^{2\alpha_i}}\kappa' \\
	&= \frac{y_i'}{\kappa^{\alpha_i}} -  \frac{\alpha_i y_i}{\kappa^{\alpha_i+1}}\left(\sum_{j\in I_\alpha}\frac{\beta_j a_j}{c\kappa^{2c-1}} y_j^{2\beta_j-1}y_j' \right)
	= \frac{f_i(y)}{\kappa^{\alpha_i}} -  \frac{\alpha_i y_i}{\kappa^{\alpha_i+2c}}\left(\sum_{j\in I_\alpha} \frac{a_j}{\alpha_j} y_j^{2\beta_j-1}f_j(y) \right)\quad \text{ if }i\in I_\alpha,
\end{align*}
Namely,
\begin{equation}
\label{ODE-poincare}
x' = A_\alpha \left(f(y) - \kappa^{-1}\langle f, \nabla \kappa \rangle y_\alpha \right)
\end{equation}
with an appropriate permutation of coordinates.
We have the one-to-one correspondence of {\em bounded} equilibria, which helps us with detecting dynamics at infinity.
\begin{prop}
The quasi-Poincar\'{e} compactification $T_{qP}$ maps bounded equilibria of (\ref{ODE-original}) in $\mathbb{R}^n$ into equilibria of (\ref{ODE-poincare}) in $\mathcal{D}$, and vice versa. 
\end{prop}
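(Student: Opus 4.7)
The plan is to read off the claim directly from the transformed equation (\ref{ODE-poincare}), $x' = A_\alpha\bigl(f(y) - \kappa^{-1}\langle f,\nabla\kappa\rangle y_\alpha\bigr)$, by exploiting the nonsingularity of $A_\alpha$ at finite $y$ together with property (A3) from Proposition \ref{prop-adm}.

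First I would handle the easy direction. If $y^\ast\in\mathbb{R}^n$ satisfies $f(y^\ast)=0$, then the bracketed quantity in (\ref{ODE-poincare}) vanishes termwise, so $x^\ast=T_{qP}(y^\ast)\in\mathcal{D}$ is an equilibrium of the compactified vector field; no further work is needed here.

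For the converse I would argue as follows. Fix $x^\ast\in\mathcal{D}$, equivalently a finite $y^\ast=T_{qP}^{-1}(x^\ast)\in\mathbb{R}^n$, and assume $x'=0$ at $x^\ast$. Because $\kappa(y^\ast)$ is finite and strictly positive, the diagonal matrix $A_\alpha=\diag(\kappa^{-\alpha_1},\ldots,\kappa^{-\alpha_l},1,\ldots,1)$ is invertible, so the equilibrium condition reduces to
\begin{equation*}
f(y^\ast) = \kappa(y^\ast)^{-1}\,\langle f(y^\ast),\nabla\kappa(y^\ast)\rangle\, y^\ast_\alpha.
\end{equation*}
Taking the inner product of both sides with $\nabla\kappa(y^\ast)$ and invoking (A3), which gives $\langle y^\ast_\alpha,\nabla\kappa(y^\ast)\rangle=p(y^\ast)<\kappa(y^\ast)$, yields
\begin{equation*}
\langle f(y^\ast),\nabla\kappa(y^\ast)\rangle\Bigl(1-\frac{p(y^\ast)}{\kappa(y^\ast)}\Bigr)=0.
\end{equation*}
Since $y^\ast$ is a finite point, $\kappa(y^\ast)=(1+p(y^\ast)^{2c})^{1/2c}>p(y^\ast)$, so the parenthesized factor is strictly positive, forcing $\langle f(y^\ast),\nabla\kappa(y^\ast)\rangle=0$. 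Plugging this back into the displayed identity gives $f(y^\ast)=0$.

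I do not expect any real obstacle: the only delicate point is ensuring that the scalar factor $1-p(y)/\kappa(y)$ is nonzero, which is precisely where the separation of the horizon $\mathcal{E}=\{p(x)=1\}$ from the finite region $\mathcal{D}=\{p(x)<1\}$ enters and why the statement is restricted to bounded equilibria. One could also note briefly that the argument explains why equilibria at infinity (on $\mathcal{E}$) are not in general pull-backs of equilibria of $f$: there the factor $1-p/\kappa$ degenerates and $\langle f,\nabla\kappa\rangle$ need not vanish, consistent with the role of the time-rescaling $\kappa^{-k}\,dt$ that is implicit in (\ref{ODE-poincare}) near infinity.
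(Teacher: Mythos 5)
Your proposal is correct and follows essentially the same route as the paper: the forward direction is read off termwise, and the converse is obtained by pairing the equilibrium identity with $\nabla\kappa$ and invoking (A3) to see that the scalar factor $1-\kappa^{-1}\langle y_\alpha,\nabla\kappa\rangle = 1-p/\kappa$ is nonzero on $\mathcal{D}$, forcing $\langle f,\nabla\kappa\rangle=0$ and hence $f=0$. Your added remarks on the invertibility of $A_\alpha$ and on why the argument degenerates on the horizon are harmless elaborations of the same proof.
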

\begin{proof}
Suppose that $y_\ast$ is an equilibrium of (\ref{ODE-original}), i.e., $f(y_\ast) = 0$.
Then the right-hand side of (\ref{ODE-poincare}) obviously vanishes at the corresponding $x_\ast$.
\par
Conversely, suppose that the right-hand side of (\ref{ODE-poincare}) vanishes at a point $x\in \mathcal{D}, p(x) < 1$: namely,
\begin{equation*}
f(\kappa x) - \kappa(y)^{-1}\langle \nabla \kappa, f(\kappa x) \rangle y_\alpha = 0.
\end{equation*}
Multiplying $\nabla \kappa$, we have
\begin{equation*}
\langle \nabla \kappa, f(\kappa x) \rangle \left(1- \kappa(y)^{-1}\langle \nabla \kappa, y_\alpha \rangle \right) = 0.
\end{equation*}
Due to (A3) in Proposition \ref{prop-adm}-2, we have $|\kappa(y)^{-1}\langle \nabla \kappa, y_\alpha \rangle| < 1$ and hence $\langle \nabla \kappa, f(\kappa x) \rangle = 0$.
Thus we have $f(y) = f(\kappa x) = 0$ by the assumption.
\end{proof}

Next we discuss the dynamics at infinity. Denoting
\begin{equation}
\label{f-tilde}
\tilde f_j(x_1,\cdots, x_n) := \kappa^{-(k+\alpha_j)} f_j(\kappa^{\alpha_1}x_1, \cdots, \kappa^{\alpha_n}x_n),\quad j=1,\cdots, n,
\end{equation}
we have
\begin{align}
\notag
x_i' &= \kappa^{k} \tilde f_i(x)\quad \text{ if }i\not \in I_\alpha,\\
\label{vectorfield-cw}
x_i' &=  \frac{\kappa^{k+\alpha_i} \tilde f_i(x)}{\kappa^{\alpha_i}} - \frac{\kappa^{\alpha_i} x_i}{\beta_i \kappa^{\alpha_i+2c}}\left(\sum_{j\in I_\alpha}\beta_j a_j (\kappa^{\alpha_j} x_j)^{2\beta_j-1}\kappa^{k+\alpha_j} \tilde f_j(x) \right)\quad \text{ if }i\in I_\alpha.
\end{align}
Since $\kappa \to \infty$ as $p \to 1$, then the vector field has singularities at infinity, while $\tilde f_j(x)$ themselves are continuous on $\overline{\mathcal{D}}$.
Nevertheless, the definition of quasi-Poincar\'{e} compactification yields the following observation.
\begin{lem}
\label{lem-order}
The right-hand side of (\ref{vectorfield-cw}) is $O(\kappa^k)$ as $\kappa\to \infty$ no matter whether or not $i\in I_\alpha$.
In other words, the order with respect to $\kappa$ is independent of components of asymptotically quasi-homogeneous vector fields.
\end{lem}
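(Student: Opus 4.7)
The proof is essentially exponent bookkeeping, with the key input being condition (\ref{LCM}), namely $\alpha_j \beta_j \equiv c$ for $j \in I_\alpha$. Since the claim is a statement about sizes of continuous functions on $\overline{\mathcal{D}}$, I will separate the factors into (i) pure powers of $\kappa$ and (ii) functions that remain bounded on $\overline{\mathcal{D}}$, then verify that all the $\kappa$-powers collapse to $\kappa^k$.

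First I would dispose of the easy case $i\notin I_\alpha$. Here (\ref{vectorfield-cw}) reads $x_i' = \kappa^k \tilde f_i(x)$, and since $\tilde f_i$ is continuous on the compact set $\overline{\mathcal{D}}$, boundedness of $\tilde f_i$ gives $x_i'=O(\kappa^k)$ immediately.

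Next I would treat the case $i\in I_\alpha$ term by term. The first summand $\kappa^{k+\alpha_i}\tilde f_i(x)/\kappa^{\alpha_i}$ simplifies to $\kappa^k \tilde f_i(x)$, which is $O(\kappa^k)$ by the same continuity argument. For the second summand, I extract the $\kappa$-powers:
\begin{equation*}
\frac{\kappa^{\alpha_i}}{\kappa^{\alpha_i + 2c}} \cdot (\kappa^{\alpha_j})^{2\beta_j - 1} \cdot \kappa^{k+\alpha_j}
= \kappa^{-2c} \cdot \kappa^{2\alpha_j\beta_j - \alpha_j + k + \alpha_j}
= \kappa^{-2c + 2\alpha_j\beta_j + k}.
\end{equation*}
Invoking (\ref{LCM}) for $j\in I_\alpha$, i.e., $\alpha_j\beta_j = c$, the exponent becomes exactly $k$. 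The remaining factors $x_i$, $x_j^{2\beta_j - 1}$, and $\tilde f_j(x)$ are all continuous on $\overline{\mathcal{D}}$ (in particular bounded), so each summand of the second term is $O(\kappa^k)$, uniformly in the finitely many $j\in I_\alpha$.

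The only subtle point — which is less an obstacle than a place to be careful — is recognizing that the quasi-homogeneous rescaling of $f_j$ built into the definition (\ref{f-tilde}) of $\tilde f_j$ is what produces the balanced exponent $k+\alpha_j$ inside the sum, and that the identity $\alpha_j\beta_j = c$ is precisely what makes $\kappa^{2\alpha_j\beta_j - 2c}$ collapse to $\kappa^0$; this is exactly why $c$ is chosen via (\ref{LCM}) and explains the remark that it justifies the desingularization of the vector field at infinity. Summing the two $O(\kappa^k)$ contributions yields the lemma.
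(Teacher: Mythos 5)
Your proof is correct and follows essentially the same exponent bookkeeping as the paper: the first term collapses to $\kappa^k\tilde f_i$, and in the second term the identity $\alpha_j\beta_j=c$ from (\ref{LCM}) reduces $\kappa^{2\alpha_j\beta_j+k-2c}$ to $\kappa^k$, with the remaining factors bounded. One tiny caveat: $\overline{\mathcal{D}}$ need not be compact when $I_\alpha\neq\{1,\cdots,n\}$, so the boundedness of $\tilde f_j$ should be justified, as the paper does, by the uniform boundedness built into asymptotic quasi-homogeneity (i.e., $\tilde f_j=O(1)$ as $\kappa\to\infty$) rather than by compactness.
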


\begin{proof}
The case $i\not \in I_\alpha$ is obvious. 
Consider then the case $i \in I_\alpha$.
\par
By definition $\tilde f_i$ is $O(1)$ as $\kappa\to \infty$, and hence the first term in the right-hand side of (\ref{vectorfield-cw}) is $O(\kappa^k)$.
Check the second term.
Direct calculations yield
\begin{align*}
 \frac{\kappa^{\alpha_i} x_i}{\beta_i \kappa^{\alpha_i+2c}}\left(\sum_{j\in I_\alpha} \beta_j a_j (\kappa^{\alpha_j} x_j)^{2\beta_j-1}\kappa^{k+\alpha_j} \tilde f_j(x) \right) &=  \frac{x_i}{\beta_i \kappa^{2c}}\left(\sum_{j\in I_\alpha} \beta_j a_j \kappa^{\alpha_j(2\beta_j -1)+k+\alpha_j} x_j^{2\beta_j-1} \tilde f_j(x) \right)\\
	&=  \frac{x_i}{\beta_i \kappa^{2c}}\left(\sum_{j\in I_\alpha} \beta_j a_j \kappa^{2\alpha_j \beta_j +k} x_j^{2\beta_j-1} \tilde f_j(x) \right)\\
	&=  \frac{x_i \kappa^k}{\beta_i }\left(\sum_{j\in I_\alpha} \beta_j a_j x_j^{2\beta_j-1} \tilde f_j(x) \right),
\end{align*}
where we used the condition $\alpha_j \beta_j \equiv c$ for all $j$ from (\ref{LCM}).
Since $\tilde f_i$ is $O(1)$ as $\kappa\to \infty$, then the second term in the right-hand side of (\ref{vectorfield-cw}) is $O(\kappa^k)$ as $\kappa\to \infty$.
Summarizing our observations, the right-hand side of (\ref{vectorfield-cw}) is $O(\kappa^k)$ as $\kappa\to \infty$.
\end{proof}

\begin{rem}\rm
In the case of $n=1$, regard the order of quasi-homogeneous functions $f$ as $k+1 \equiv k+\alpha_1$ instead of $k$, which is compatible with the general case containing $I_\alpha = \{i\}$ for some $i\in \{1,\cdots, n\}$.
In such a case, Lemma \ref{lem-order} still holds for $n=1$ without any modifications, unlike the arguments in \cite{EG2006}.
\end{rem}
Lemma \ref{lem-order} leads to introduce the following transformation of time variable.

\begin{dfn}[Time-variable desingularization]\rm 
Define the new time variable $\tau$ depending on $y$ by
\begin{equation}
\label{time-desing}
d\tau = \kappa(y(t))^{k} dt,
\end{equation}
equivalently,
\begin{equation*}
t - t_0 = \int_{\tau_0}^\tau \frac{d\tau}{\kappa(y(\tau))^k},
\end{equation*}
where $\tau_0$ and $t_0$ denote the correspondence of initial times, and $y(\tau)$ is the solution trajectory $y(t)$ under the parameter $\tau$.
We shall call (\ref{time-desing}) {\em the desingularization of (\ref{vectorfield-cw}) of order $k+1$}.
\end{dfn}

The vector field (\ref{vectorfield-cw}) is then desingularized in $\tau$-time scale:
\begin{equation}
\label{ODE-desing}
\dot x_i \equiv \frac{dx_i}{d\tau} = \tilde f_i - \left(\sum_{j\in I_\alpha} \beta_j a_j x_j^{2\beta_j - 1}\tilde f_j \right) \frac{x_i}{\beta_i} =  \tilde f_i - \alpha_i \kappa^{\alpha_i - 1} \langle \nabla \kappa, \tilde f \rangle x_i \equiv g_i(x).
\end{equation}

In particular, we have the extension of dynamics at infinity.
\begin{prop}[Extension of dynamics at infinity]
\label{prop-ext}
Let $\tau$ be the new time variable given by (\ref{time-desing}).
Then the dynamics (\ref{ODE-original}) can be extended to the infinity in the sense that the vector field $g$ in (\ref{ODE-desing}) is continuous on $\overline{\mathcal{D}}$, in particular, on $\mathcal{E} = \{p(x) = 1\}$.
\end{prop}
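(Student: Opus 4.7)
My plan is to start from the explicit form (\ref{ODE-desing}):
\begin{equation*}
g_i(x) = \tilde f_i(x) - \frac{x_i}{\beta_i}\sum_{j\in I_\alpha} \beta_j a_j x_j^{2\beta_j-1}\, \tilde f_j(x) \qquad (i \in I_\alpha),
\end{equation*}
with $g_i = \tilde f_i$ otherwise. The prefactors $x_i/\beta_i$ and $x_j^{2\beta_j-1}$ are polynomials, continuous on all of $\mathbb{R}^n$, so continuity of $g$ on $\overline{\mathcal{D}}$ reduces to continuity of each $\tilde f_j$ on $\overline{\mathcal{D}}$. On the interior $\mathcal{D}\setminus\mathcal{E}$, the representation $\kappa = (1-\sum_{i\in I_\alpha} a_i x_i^{2\beta_i})^{-1/2c}$ from Proposition \ref{prop-adm} shows $\kappa$ is smooth in $x$, hence so is $\tilde f_j(x) = \kappa^{-(k+\alpha_j)} f_j(\kappa^{\alpha_1} x_1, \dots, \kappa^{\alpha_n} x_n)$, as a composition with the smooth $f$.

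The essential step is extending $\tilde f_j$ to $\mathcal{E}$ with boundary value $(f_{\alpha,k})_j|_{\mathcal{E}}$. Fix $x_\ast \in \mathcal{E}$ and take $x \in \mathcal{D}$ with $x \to x_\ast$; then $R := \kappa(y(x)) \to \infty$ and $y_i = R^{\alpha_i} x_i$. Using the quasi-homogeneity of $f_{\alpha,k}$ I would rewrite
\begin{equation*}
\tilde f_j(x) - (f_{\alpha,k})_j(x) = R^{-(k+\alpha_j)}\bigl[\, f_j(R^{\alpha_1} x_1, \dots, R^{\alpha_n} x_n) - R^{k+\alpha_j}(f_{\alpha,k})_j(x)\,\bigr].
\end{equation*}
The asymptotic quasi-homogeneity of $f$ makes the right-hand side vanish as $R \to \infty$ uniformly for $x \in U_1$, where $U_1$ uses the \emph{standard} unit sphere in the $I_\alpha$-directions, whereas $x$ lives on the quasi-sphere $\mathcal{E}$. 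Reconciling this mismatch is the main obstacle I anticipate.

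To bridge it, I would introduce a positive rescaling $\lambda(x)$ defined implicitly near $x_\ast$ by $\sum_{i \in I_\alpha}(x_i/\lambda^{\alpha_i})^2 = 1$; the left-hand side is strictly monotone in $\lambda$, so this yields a unique smooth positive solution, and $\lambda(x_\ast) > 0$ since at least one $x_{\ast,i}$ with $i \in I_\alpha$ is nonzero on $\mathcal{E}$. Setting $\hat x_i := x_i/\lambda(x)^{\alpha_i}$ for $i \in I_\alpha$ (and $\hat x_i := x_i$ otherwise) puts $\hat x \in U_1$, and since $R^{\alpha_i} x_i = (R\lambda)^{\alpha_i}\hat x_i$, applying quasi-homogeneity of $(f_{\alpha,k})_j$ converts the identity above into
\begin{equation*}
\tilde f_j(x) - (f_{\alpha,k})_j(x) = \lambda^{k+\alpha_j}\,(R\lambda)^{-(k+\alpha_j)}\bigl[\, f_j((R\lambda)^{\alpha_1}\hat x_1, \dots) - (R\lambda)^{k+\alpha_j}(f_{\alpha,k})_j(\hat x)\,\bigr].
\end{equation*}
Because $\lambda(x) \to \lambda(x_\ast) \in (0,\infty)$, we have $R\lambda \to \infty$ while $\hat x$ ranges in a compact neighborhood of $\hat x_\ast \in U_1$, so the bracketed quantity vanishes uniformly by the definition of asymptotic quasi-homogeneity, and the bounded factor $\lambda^{k+\alpha_j}$ preserves the limit. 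Hence $\tilde f_j(x) \to (f_{\alpha,k})_j(x_\ast)$, and substitution into (\ref{ODE-desing}) delivers the continuous extension of $g$ to $\overline{\mathcal{D}}$, with explicit boundary values obtained by replacing each $\tilde f_j$ by $(f_{\alpha,k})_j$ on $\mathcal{E}$.
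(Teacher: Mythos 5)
Your proposal is correct, and it is worth noting how it relates to the paper's own argument. The paper's proof of Proposition \ref{prop-ext} is a one-line assertion: $g$ is a sum of products of the polynomials $x_i$ and the functions $\tilde f_j$, and the $\tilde f_j$ are ``continuous on $\overline{\mathcal{D}}$'' --- a fact the paper states just after (\ref{vectorfield-cw}) but never verifies. You perform the same reduction to the continuity of the $\tilde f_j$, but then actually supply the missing verification, and in doing so you correctly identify the one nontrivial point: the definition of asymptotic quasi-homogeneity gives uniform convergence only for arguments whose $I_\alpha$-components lie on the Euclidean sphere $S^{l-1}$ (the set $U_1$), whereas points of $\mathcal{E}$ lie on the quasi-sphere $\{p(x)=1\}$. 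Your rescaling $\lambda(x)$, defined implicitly by $\sum_{i\in I_\alpha}(x_i/\lambda^{\alpha_i})^2=1$, is a clean way to bridge this: $\lambda$ is well defined and bounded away from $0$ and $\infty$ near any $x_\ast\in\mathcal{E}$ because some $(x_\ast)_i$ with $i\in I_\alpha$ is nonzero there, the identity $R^{\alpha_i}x_i=(R\lambda)^{\alpha_i}\hat x_i$ together with the quasi-homogeneity of $f_{\alpha,k}$ converts the defining limit into one taken along $U_1$ with $R\lambda\to\infty$, and the conclusion $\tilde f_j(x)\to(f_{\alpha,k})_j(x_\ast)$ follows (using also the continuity of $f_{\alpha,k}$ to pass from $\tilde f_j(x)-(f_{\alpha,k})_j(x)\to 0$ to the stated limit). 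The net effect is that your argument proves what the paper merely asserts, and as a bonus identifies the explicit boundary values of $g$ on $\mathcal{E}$, consistent with Theorem \ref{thm-dyn-infty}-2. No gap.
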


\begin{proof}
The component-wise desingularized vector field (\ref{ODE-desing}) is obviously continuous on $\overline{\mathcal{D}}$ since this consists of product and sum of continuous functions $x_i$'s and $\tilde f_i$'s on $\overline{\mathcal{D}}$.
\end{proof}

\subsection{Desingularized vector fields for directional compactifications}
Next consider the vector field corresponding to (\ref{ODE-desing}) in the coordinate of directional compactification $T_{\bf h}$.
Following (\ref{dfn-dir-cpt}), we have
\begin{equation*}
y_i' = 
\begin{cases}
f_i(y), & \text{ if }i\not \in I_\alpha,\\
\displaystyle{ -\alpha_i s^{-(\alpha_i +1)} h_i s' + s^{-\alpha_i} \sum_{j=1}^{n-1}\frac{\partial h_i}{\partial \theta_j} \theta_j' }, & \text{ if }i \in I_\alpha,
\end{cases}
\end{equation*}
while its vector- and matrix-form is
\begin{equation}
\label{vec-directional}
\begin{pmatrix}
y_1' \\ y_2' \\ \vdots \\ y_n'
\end{pmatrix}
=
\begin{pmatrix}
\alpha_1 s^{-(\alpha_1+1)} h_1 & s^{-\alpha_1} \frac{\partial h_1}{\partial \theta_1} & \cdots & s^{-\alpha_1} \frac{\partial h_1}{\partial \theta_{n-1}}\\
\alpha_2 s^{-(\alpha_2+1)} h_2 & s^{-\alpha_2} \frac{\partial h_2}{\partial \theta_1} & \cdots & s^{-\alpha_2} \frac{\partial h_2}{\partial \theta_{n-1}}\\
\vdots & \vdots & \ddots & \vdots \\
\alpha_n s^{-(\alpha_n+1)} h_n & s^{-\alpha_n} \frac{\partial h_n}{\partial \theta_1} & \cdots & s^{-\alpha_n} \frac{\partial h_n}{\partial \theta_{n-1}}
\end{pmatrix}
\begin{pmatrix}
s' \\ \theta_1' \\ \vdots \\ \theta_{n-1}'
\end{pmatrix}
\equiv D_s\begin{pmatrix}
s' \\ \theta_1' \\ \vdots \\ \theta_{n-1}'
\end{pmatrix}.
\end{equation}
It easily follows that the matrix $D_s$ is written by the following product of matrices:
\begin{equation*}
D_s = \begin{pmatrix}
s^{-\alpha_1} & 0 & \cdots & 0 \\
0 & s^{-\alpha_2} & \cdots & 0\\
\vdots & \vdots & \ddots & \vdots \\
0 & 0 & \cdots & s^{-\alpha_n}
\end{pmatrix}
\begin{pmatrix}
\alpha_1 h_1 & \frac{\partial h_1}{\partial \theta_1} & \cdots & \frac{\partial h_1}{\partial \theta_{n-1}}\\
\alpha_2 h_2 & \frac{\partial h_2}{\partial \theta_1} & \cdots & \frac{\partial h_2}{\partial \theta_{n-1}}\\
\vdots & \vdots & \ddots & \vdots \\
\alpha_n h_n & \frac{\partial h_n}{\partial \theta_1} & \cdots & \frac{\partial h_n}{\partial \theta_{n-1}}
\end{pmatrix}
\begin{pmatrix}
-s^{-1} & 0 & \cdots & 0 \\
0 & 1 & \cdots & 0\\
\vdots & \vdots & \ddots & \vdots \\
0 & 0 & \cdots & 1
\end{pmatrix}.
\end{equation*}
The middle matrix in the right-hand side is exactly the matrix $A(\theta_1,\cdots, \theta_{n-1})$ in Definition \ref{dfn-dir-cpt} and hence, by assumption, the matrix $A$ is invertible on $M$.
Let $B = B(\theta_1,\cdots, \theta_{n-1})$ be the inverse of $A$.
Consequently, the matrix $D_s$ is invertible on $\{s>0\}\times M$ to obtain
\begin{equation*}
D_s^{-1} = \begin{pmatrix}
-s & 0 & \cdots & 0 \\
0 & 1 & \cdots & 0\\
\vdots & \vdots & \ddots & \vdots \\
0 & 0 & \cdots & 1
\end{pmatrix}
B
\begin{pmatrix}
s^{\alpha_1} & 0 & \cdots & 0 \\
0 & s^{\alpha_2} & \cdots & 0\\
\vdots & \vdots & \ddots & \vdots \\
0 & 0 & \cdots & s^{\alpha_n}
\end{pmatrix}.
\end{equation*}
Therefore (\ref{vec-directional}) in $\{s>0\}\times M$ is equivalent to
\begin{equation}
\label{vec-directional-inv}
\begin{pmatrix}
s' \\ \theta_1' \\ \vdots \\ \theta_{n-1}'
\end{pmatrix}
=
\begin{pmatrix}
-s & 0 & \cdots & 0 \\
0 & 1 & \cdots & 0\\
\vdots & \vdots & \ddots & \vdots \\
0 & 0 & \cdots & 1
\end{pmatrix}
B
\begin{pmatrix}
s^{\alpha_1} & 0 & \cdots & 0 \\
0 & s^{\alpha_2} & \cdots & 0\\
\vdots & \vdots & \ddots & \vdots \\
0 & 0 & \cdots & s^{\alpha_n}
\end{pmatrix}
\begin{pmatrix}
y_1' \\ y_2' \\ \vdots \\ y_n'
\end{pmatrix}.
\end{equation}
Similarly to (\ref{f-tilde}), let
\begin{equation}
\label{f-tilde-directional}
\hat f_j(s, \theta_1, \cdots, \theta_{n-1}) := s^{k+\alpha_j} f_j(s^{-\alpha_1}h_1, \cdots, s^{-\alpha_n}h_n)\text{ with }h_i = h_i(\theta_1,\cdots, \theta_{n-1}),\quad j=1,\cdots, n.
\end{equation}
Then (\ref{vec-directional-inv}) is rewritten as
\begin{equation}
\label{vec-directional-inv-2}
\begin{pmatrix}
s' \\ \theta_1' \\ \vdots \\ \theta_{n-1}'
\end{pmatrix}
=
s^{-k}\begin{pmatrix}
-s & 0 & \cdots & 0 \\
0 & 1 & \cdots & 0\\
\vdots & \vdots & \ddots & \vdots \\
0 & 0 & \cdots & 1
\end{pmatrix}
B
\begin{pmatrix}
\hat f_1 \\ \hat f_2 \\ \vdots \\ \hat f_n
\end{pmatrix}.
\end{equation}
The form of $\hat f_i$ in (\ref{f-tilde-directional}) and asymptotic quasi-homogeneity of $f_i$ and $s$-independence of the matrix $B$ immediately yield the following consequence, which is the directional compactifications' analogue of Lemma \ref{lem-order}.

\begin{lem}
\label{lem-order-directional}
The right-hand side of (\ref{vec-directional-inv-2}) is $O(s^{-k})$ as $s\to 0$ no matter whether or not $i\in I_\alpha$.
More precisely, the $s$-component of (\ref{vec-directional-inv-2}) is $O(s^{-k+1})$ as $s\to 0$.
\end{lem}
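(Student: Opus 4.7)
The plan is to reduce the statement to the single boundedness claim that $\hat f_j(s,\theta)=O(1)$ as $s\to 0^+$, locally uniformly in $\theta$; once this is known, the matrix structure in (\ref{vec-directional-inv-2}) delivers both order statements by inspection.

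First I would unpack $\hat f_j(s,\theta)=s^{k+\alpha_j}f_j(s^{-\alpha_1}h_1,\ldots,s^{-\alpha_n}h_n)$. Setting $R=1/s\to\infty$, this equals $R^{-(k+\alpha_j)}f_j(R^{\alpha_1}h_1,\ldots,R^{\alpha_n}h_n)$, but to invoke the asymptotic quasi-homogeneity of $f$ (stated uniformly for arguments on $U_1$, i.e.\ with $I_\alpha$-components on $S^{l-1}$) one must first rescale $(h_1(\theta),\ldots,h_n(\theta))$ onto such a set. Using (Dir2), define $\lambda(\theta):=\bigl(\sum_{i\in I_\alpha}a_ih_i(\theta)^{2\beta_i}\bigr)^{1/2c}\geq c_h^{1/2c}>0$; writing $\tilde h_i:=h_i/\lambda^{\alpha_i}$ for $i\in I_\alpha$ and $\tilde h_i:=h_i$ otherwise gives $\sum_{i\in I_\alpha}a_i\tilde h_i^{2\beta_i}\equiv 1$, so $\tilde h(\theta)$ stays on a compact weighted sphere, and $s^{-\alpha_i}h_i=(\lambda/s)^{\alpha_i}\tilde h_i$. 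With $\tilde R:=\lambda(\theta)/s\to\infty$, the asymptotic quasi-homogeneity together with quasi-homogeneity of the leading part yields
\[
\hat f_j(s,\theta) \;=\; \lambda(\theta)^{k+\alpha_j}(f_{\alpha,k})_j(\tilde h(\theta)) + o(1) \;=\; (f_{\alpha,k})_j(h(\theta)) + o(1)
\]
as $s\to 0^+$, locally uniformly in $\theta$. The leading term is continuous in $\theta$, so $\hat f_j$ is bounded on compact subsets of $\{s\geq 0\}\times M$.

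Second, by (Dir4) the matrix $B(\theta)=A(\theta)^{-1}$ is smooth on $M$, hence locally bounded and independent of $s$; consequently $B(\theta)\hat f(s,\theta)=O(1)$ as $s\to 0^+$. Substituting into (\ref{vec-directional-inv-2}) and reading componentwise, the overall factor $s^{-k}$ is pre-multiplied by $\mathrm{diag}(-s,1,\ldots,1)$, so the first row gains an extra factor of $s$, giving $s'=-s^{-k+1}(B\hat f)_1=O(s^{-k+1})$, whereas $\theta_j'=s^{-k}(B\hat f)_{j+1}=O(s^{-k})$ for each $j=1,\ldots,n-1$, which establishes both assertions.

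The only delicate point is the uniformity issue in the first step: the definition of asymptotic quasi-homogeneity provides uniform convergence on $U_1=S^{l-1}\times\tilde U$, whereas the arguments $(h_1(\theta),\ldots,h_n(\theta))$ do not a priori have their $I_\alpha$-components on the Euclidean sphere. The $\lambda$-rescaling enabled by (Dir2) is precisely what bridges this gap, because the weighted sphere and $S^{l-1}$ are both compact and bounded away from the origin in the $I_\alpha$-directions, so the uniform convergence transfers across the bounded rescaling. Everything else is bookkeeping.
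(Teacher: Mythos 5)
Your proof is correct and takes essentially the same route the paper intends: the paper states this lemma without proof, asserting that it follows ``immediately'' from the form of $\hat f_j$ in (\ref{f-tilde-directional}), the asymptotic quasi-homogeneity of $f$, and the $s$-independence of $B$ --- which is exactly the skeleton you flesh out (boundedness $\hat f_j = (f_{\alpha,k})_j(h(\theta)) + o(1)$, then read off the factors $s^{-k}$ and $\mathrm{diag}(-s,1,\dots,1)$). Your extra care in rescaling $h(\theta)$ via $\lambda(\theta)$ onto the weighted sphere, using (Dir2), so that the uniform convergence in the definition of asymptotic quasi-homogeneity can legitimately be invoked, supplies the one detail the paper glosses over.
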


Lemma \ref{lem-order-directional} leads to introduce the following transformation of time variable.

\begin{dfn}[Time-variable desingularization (directional compactification version)]\rm 
Define the new time variable $\tau_d$ depending on $y$ by
\begin{equation}
\label{time-desing-directional}
d\tau_d = s(t)^{-k} dt
\end{equation}
equivalently,
\begin{equation*}
t - t_0 = \int_{\tau_0}^\tau s(\tau_d)^k d\tau_d,
\end{equation*}
where $\tau_0$ and $t_0$ denote the correspondence of initial times, and $s(\tau_d)$ is the solution trajectory $s(t)$ under the parameter $\tau$.
We shall call (\ref{time-desing-directional}) {\em the desingularization of (\ref{vec-directional-inv-2}) of order $k+1$}.
\end{dfn}

The vector field (\ref{vec-directional-inv-2}) is then desingularized in $\tau$-time scale:
\begin{equation}
\label{ODE-desing-directional}
\begin{pmatrix}
\frac{ds}{d\tau_d} \\ \frac{d\theta_1}{d\tau_d} \\ \vdots \\ \frac{d\theta_{n-1}}{d\tau_d}
\end{pmatrix}
=
\begin{pmatrix}
-s & 0 & \cdots & 0 \\
0 & 1 & \cdots & 0\\
\vdots & \vdots & \ddots & \vdots \\
0 & 0 & \cdots & 1
\end{pmatrix}
B
\begin{pmatrix}
\hat f_1 \\ \hat f_2 \\ \vdots \\ \hat f_n
\end{pmatrix}\equiv g_d(s,\theta_1,\cdots, \theta_{n-1}).
\end{equation}

In particular, we have the extension of dynamics at infinity.
\begin{prop}[Extension of dynamics at infinity (directional compactification version)]
\label{prop-ext-directional}
Let $\tau_d$ be the new time variable given by (\ref{time-desing-directional}).
Then the dynamics (\ref{ODE-original}) can be extended to the infinity in the sense that the vector field $g_d$ in (\ref{ODE-desing-directional}) is continuous on $\{s\geq 0\}\times M$.
\end{prop}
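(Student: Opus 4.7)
The plan is to treat the three factors in the definition \eqref{ODE-desing-directional} of $g_d$ separately. The leftmost diagonal matrix $\diag(-s,1,\ldots,1)$ is trivially continuous in $s$. The matrix $B(\theta_1,\ldots,\theta_{n-1}) = A(\theta_1,\ldots,\theta_{n-1})^{-1}$ is smooth on $M$ by (Dir4), hence in particular continuous. Thus the only nontrivial step is to show that each rescaled nonlinearity $\hat f_j(s,\theta)$ defined by \eqref{f-tilde-directional} extends continuously across the horizon $\{s=0\}$.

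I would prove this by rescaling the argument of $f_j$ so that the defining limit of asymptotic quasi-homogeneity applies. Set $P(\theta) := \bigl(\sum_{i\in I_\alpha} a_i h_i(\theta)^{2\beta_i}\bigr)^{1/2c}$, which by (Dir2) is bounded below by a positive constant (locally uniformly in $\theta$). Let $R := P(\theta)/s$ and, for $i\in I_\alpha$, define $\tilde h_i := h_i/P^{\alpha_i}$, so that $\sum_{i\in I_\alpha} a_i \tilde h_i^{2\beta_i} = 1$; for $i\notin I_\alpha$ set $\tilde h_i := h_i$. Then $s^{-\alpha_i} h_i = R^{\alpha_i}\tilde h_i$ and
\begin{equation*}
\hat f_j(s,\theta) = P(\theta)^{k+\alpha_j}\, R^{-(k+\alpha_j)}\, f_j\bigl(R^{\alpha_1}\tilde h_1,\ldots,R^{\alpha_n}\tilde h_n\bigr).
\end{equation*}
As $s\to 0^+$, $R\to\infty$. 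Asymptotic quasi-homogeneity of $f$ (applied after an auxiliary radial rescaling, if needed, to land on the Euclidean unit sphere $S^{l-1}$ used in the definition of $U_1$) yields $R^{-(k+\alpha_j)} f_j(R^{\alpha_1}\tilde h_1,\ldots,R^{\alpha_n}\tilde h_n) \to (f_{\alpha,k})_j(\tilde h)$, and the genuine quasi-homogeneity of $f_{\alpha,k}$ itself then gives
\begin{equation*}
\lim_{s\to 0^+} \hat f_j(s,\theta) = P(\theta)^{k+\alpha_j}(f_{\alpha,k})_j\bigl(\tilde h(\theta)\bigr) = (f_{\alpha,k})_j\bigl(h_1(\theta),\ldots,h_n(\theta)\bigr),
\end{equation*}
a continuous function of $\theta$.

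The main obstacle is upgrading this limit from pointwise in $\theta$ to joint continuity of $\hat f_j$ at a boundary point $(0,\theta_0)$. I would handle this by fixing a compact neighborhood $K\subset M$ of $\theta_0$ on which $P$ and $\tilde h$ are continuous with $P\geq c_K > 0$; then $R\to\infty$ uniformly in $\theta\in K$ as $s\to 0^+$, and the uniformity built into the asymptotic quasi-homogeneity (uniform convergence on compact subsets of $U_1$) delivers uniform convergence of $\hat f_j(s,\theta)$ in $\theta\in K$. Continuity of the prescribed boundary value in $\theta$ then upgrades this to joint continuity at $(0,\theta_0)$. Combining the continuity of all three factors yields continuity of $g_d$ on $\{s\geq 0\}\times M$, as claimed.
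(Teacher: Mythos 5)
Your proof is correct and follows essentially the route the paper intends: the paper states this proposition without a written proof, asserting that it follows from the form of $\hat f_j$ in (\ref{f-tilde-directional}), the asymptotic quasi-homogeneity of $f$, and the $s$-independence of $B$ — precisely the three ingredients you assemble, with the rescaling $R = P(\theta)/s$, $\tilde h_i = h_i/P^{\alpha_i}$ supplying the details (including the correct handling of the normalization mismatch with $U_1$ and the upgrade to joint continuity) that the paper leaves implicit.
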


The assumption $s\sim \kappa^{-1}$ as $s\to 0$ in Definition \ref{dfn-dir-cpt} ensures that the order of time-scales $\tau$ and $\tau_d$ is identical near infinity.

\par
\bigskip
We can derive the desingularized vector field associated with (\ref{ODE-original}) via the intermediate cmpactification $T_{{\bf h},int}$ in the similar way.
Now we have
\begin{equation}
\label{vec-intermediate}
\begin{pmatrix}
y_1' \\ y_2' \\ \vdots \\ y_n'
\end{pmatrix}
=
\begin{pmatrix}
-\frac{\alpha_1}{2c} R^{-(\frac{\alpha_1}{2c}+1)} h_1 & R^{-\frac{\alpha_1}{2c}} \frac{\partial h_1}{\partial \theta_1} & \cdots & R^{-\frac{\alpha_1}{2c}} \frac{\partial h_1}{\partial \theta_{n-1}}\\
-\frac{\alpha_2}{2c} R^{-(\frac{\alpha_2}{2c}+1)} h_2 & R^{-\frac{\alpha_2}{2c}} \frac{\partial h_2}{\partial \theta_1} & \cdots & R^{-\frac{\alpha_2}{2c}}\frac{\partial h_2}{\partial \theta_{n-1}}\\
\vdots & \vdots & \ddots & \vdots \\
-\frac{\alpha_n}{2c} R^{-(\frac{\alpha_n}{2c}+1)} h_n & R^{-\frac{\alpha_n}{2c}} \frac{\partial h_n}{\partial \theta_1} & \cdots & R^{-\frac{\alpha_n}{2c}} \frac{\partial h_n}{\partial \theta_{n-1}}
\end{pmatrix}
\begin{pmatrix}
R' \\ \theta_1' \\ \vdots \\ \theta_{n-1}'
\end{pmatrix}
\equiv D_R\begin{pmatrix}
R' \\ \theta_1' \\ \vdots \\ \theta_{n-1}'
\end{pmatrix}.
\end{equation}
It easily follows that the matrix $D_R$ is written by the following product of matrices:
\begin{equation*}
D_R = \begin{pmatrix}
R^{-\frac{\alpha_1}{2c}} & 0 & \cdots & 0 \\
0 & R^{-\frac{\alpha_2}{2c}} & \cdots & 0\\
\vdots & \vdots & \ddots & \vdots \\
0 & 0 & \cdots & R^{-\frac{\alpha_n}{2c}}
\end{pmatrix}
A
\begin{pmatrix}
-R^{-1} & 0 & \cdots & 0 \\
0 & 1 & \cdots & 0\\
\vdots & \vdots & \ddots & \vdots \\
0 & 0 & \cdots & 1
\end{pmatrix},
\end{equation*}
where $A$ is the matrix $A(\theta_1,\cdots, \theta_{n-1})$ given in (\ref{A-theta}).
Introducing 
\begin{equation}
\label{f-tilde-intermediate}
\bar f_j(R, \theta_1, \cdots, \theta_{n-1}) := R^{(k+\alpha_j)/2c} f_j(R^{-\alpha_1/2c}h_1, \cdots, R^{-\alpha_n/2c}h_n)\text{ with }h_i = h_i(\theta_1,\cdots, \theta_{n-1}),\quad j=1,\cdots, n,
\end{equation}
the vector field (\ref{vec-intermediate}) is rewritten as
\begin{equation}
\label{vec-intermediate-inv}
\begin{pmatrix}
R' \\ \theta_1' \\ \vdots \\ \theta_{n-1}'
\end{pmatrix}
=
R^{-k/2c}\begin{pmatrix}
-R & 0 & \cdots & 0 \\
0 & 1 & \cdots & 0\\
\vdots & \vdots & \ddots & \vdots \\
0 & 0 & \cdots & 1
\end{pmatrix}
B
\begin{pmatrix}
\bar f_1 \\ \bar f_2 \\ \vdots \\ \bar f_n
\end{pmatrix}.
\end{equation}
The form of $\bar f_i$ in (\ref{f-tilde-intermediate}) and asymptotic quasi-homogeneity of $f_i$ and $R$-independence of the matrix $B$ immediately yield the following consequence, which is the intermediate compactifications' analogue of Lemma \ref{lem-order}.

\begin{lem}
\label{lem-order-intermediate}
The right-hand side of (\ref{vec-intermediate-inv}) is $O(R^{-k/2c})$ as $R\to 0$ no matter whether or not $i\in I_\alpha$.
More precisely, the $R$-component of (\ref{vec-intermediate-inv}) is $O(R^{(-k/2c)+1})$ as $R\to 0$.
\end{lem}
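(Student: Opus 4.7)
The plan is to mimic the proof of Lemma \ref{lem-order-directional}, with the substitution $s \leftrightarrow R^{1/2c}$ serving as the translation dictionary between the directional and intermediate compactifications. The factor $R^{-k/2c}$ has already been pulled out explicitly in (\ref{vec-intermediate-inv}), so the task reduces to showing that the remaining matrix–vector product is $O(1)$ as $R\to 0$, with an extra factor of $R$ appearing specifically in the first component because of the $-R$ entry at the top-left of the diagonal matrix.

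First I would control the $\bar f_j$. Setting $\tilde R := R^{-1/2c}$, we have $\tilde R \to +\infty$ as $R\to 0^+$, and the definition (\ref{f-tilde-intermediate}) becomes
\begin{equation*}
\bar f_j(R,\theta_1,\ldots,\theta_{n-1}) = \tilde R^{-(k+\alpha_j)} f_j\bigl(\tilde R^{\alpha_1} h_1,\ldots,\tilde R^{\alpha_n} h_n\bigr).
\end{equation*}
The asymptotic quasi-homogeneity of $f$ of type $\alpha$ and order $k+1$ then gives $\bar f_j(R,\theta) \to (f_{\alpha,k})_j(h_1(\theta),\ldots,h_n(\theta))$ as $R\to 0^+$, uniformly on compact subsets of $M$. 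In particular each $\bar f_j$ is bounded as $R\to 0^+$.

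Next I would handle the matrix factors. The matrix $B = A(\theta)^{-1}$ is independent of $R$ and, by (Dir4) in Definition \ref{dfn-directional}, smooth and bounded locally on $M$. The prefactor diagonal matrix $\diag(-R,1,\ldots,1)$ is clearly continuous at $R=0$ with value $\diag(0,1,\ldots,1)$. Combining these observations, the full right-hand side of (\ref{vec-intermediate-inv}) is $R^{-k/2c}$ times a factor that is uniformly bounded as $R\to 0$, yielding the $O(R^{-k/2c})$ bound for every component regardless of whether $i\in I_\alpha$.

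For the sharper statement on the $R$-component, observe that the first row of $\diag(-R,1,\ldots,1)\,B\,(\bar f_1,\ldots,\bar f_n)^T$ is $-R$ times the first row of $B\bar f$, which is $O(R)$ by the bounds above. Multiplying by the overall prefactor $R^{-k/2c}$ produces the refined estimate $O(R^{(-k/2c)+1})$. I do not anticipate any genuine obstacle here; the only subtlety is bookkeeping the two scaling parameters ($s$ in the directional coordinates versus $R^{1/2c}$ in the intermediate coordinates), which is exactly why the exponent shifts from $s^{-k}$ to $R^{-k/2c}$ and from $s^{-k+1}$ to $R^{(-k/2c)+1}$.
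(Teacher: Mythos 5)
Your argument is correct and is exactly the reasoning the paper intends: the paper states this lemma as an immediate consequence of the form of $\bar f_j$ in (\ref{f-tilde-intermediate}), the asymptotic quasi-homogeneity of $f$, and the $R$-independence of $B$, which is precisely the decomposition you spell out (with the substitution $\tilde R = R^{-1/2c}$ making the quasi-homogeneity limit explicit). The refined $O(R^{(-k/2c)+1})$ estimate for the $R$-component via the $-R$ entry of the diagonal prefactor also matches the paper's intent.
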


Lemma \ref{lem-order-intermediate} leads to introduce the following transformation of time variable.

\begin{dfn}[Time-variable desingularization (intermediate compactification version)]\rm 
Define the new time variable $\tau_{int}$ by
\begin{equation}
\label{time-desing-intermediate}
d\tau_{int} = R(t)^{-k/2c} dt
\end{equation}
equivalently,
\begin{equation*}
t - t_0 = \int_{\tau_0}^\tau R(\tau_{int})^{k/2c} d\tau_{int},
\end{equation*}
where $\tau_0$ and $t_0$ denote the correspondence of initial times, and $R(\tau_{int})$ is the solution trajectory $R(t)$ under the parameter $\tau$.
We shall call (\ref{time-desing-intermediate}) {\em the desingularization of (\ref{vec-intermediate-inv}) of order $k+1$}.
\end{dfn}

The vector field (\ref{vec-intermediate-inv}) is then desingularized in $\tau_{int}$-time scale:
\begin{equation}
\label{ODE-desing-intermediate}
\begin{pmatrix}
\frac{dR}{d\tau_{int}} \\ \frac{d\theta_1}{d\tau_{int}} \\ \vdots \\ \frac{d\theta_{n-1}}{d\tau_{int}}
\end{pmatrix}
=
\begin{pmatrix}
-R & 0 & \cdots & 0 \\
0 & 1 & \cdots & 0\\
\vdots & \vdots & \ddots & \vdots \\
0 & 0 & \cdots & 1
\end{pmatrix}
B
\begin{pmatrix}
\bar f_1 \\ \bar f_2 \\ \vdots \\ \bar f_n
\end{pmatrix}\equiv g_{int}(R,\theta_1,\cdots, \theta_{n-1}).
\end{equation}

In particular, we have the extension of dynamics at infinity.
\begin{prop}[Extension of dynamics at infinity (intermediate compactification version)]
\label{prop-ext-intermediate}
Let $\tau_{int}$ be the new time variable given by (\ref{time-desing-intermediate}).
Then the dynamics (\ref{ODE-original}) can be extended to the infinity in the sense that the vector field $g_{int}$ in (\ref{ODE-desing-intermediate}) is continuous on $\{R\geq 0\}\times M$.
\end{prop}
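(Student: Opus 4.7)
The plan is to mirror the proofs of Propositions \ref{prop-ext} and \ref{prop-ext-directional}: inspect each factor in the explicit expression (\ref{ODE-desing-intermediate}) for $g_{int}$, verify it is continuous on $\{R\geq 0\} \times M$, and then observe that matrix products of continuous functions are continuous.

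The only delicate factors are the functions $\bar f_j$ defined in (\ref{f-tilde-intermediate}), which are \emph{a priori} only defined for $R > 0$. First I would introduce the auxiliary variable $\tilde R = R^{-1/(2c)}$, so that $R \to 0^+$ corresponds to $\tilde R \to +\infty$, and rewrite
\begin{equation*}
\bar f_j(R, \theta_1,\ldots,\theta_{n-1}) = \tilde R^{-(k+\alpha_j)} f_j\bigl(\tilde R^{\alpha_1} h_1(\theta), \ldots, \tilde R^{\alpha_n} h_n(\theta)\bigr).
\end{equation*}
The asymptotic quasi-homogeneity of $f$ of type $\alpha$ and order $k+1$ then yields
\begin{equation*}
\lim_{R\to 0^+} \bar f_j(R, \theta_1,\ldots,\theta_{n-1}) = (f_{\alpha,k})_j\bigl(h_1(\theta),\ldots, h_n(\theta)\bigr),
\end{equation*}
locally uniformly in $\theta \in M$ (using (Dir1)--(Dir2), which keep the directional vector $(h_1,\ldots,h_n)$ away from degeneracies where the limit could fail to be uniform). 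Consequently each $\bar f_j$ admits a continuous extension to $\{R=0\}\times M$ given by the quasi-homogeneous principal part of $f$ evaluated on the horizon.

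The remaining two factors cause no trouble: the matrix $B(\theta_1,\ldots,\theta_{n-1}) = A(\theta_1,\ldots,\theta_{n-1})^{-1}$ is continuous on $M$ by (Dir4) in Definition \ref{dfn-directional} and carries no $R$-dependence, while $\mathrm{diag}(-R, 1, \ldots, 1)$ is trivially continuous (with the $-R$ entry vanishing at $R=0$, which is stronger than needed since $\bar f_1$ already extends continuously on its own). Assembling the three factors as in (\ref{ODE-desing-intermediate}), $g_{int}$ is a matrix product of entries continuous on $\{R\geq 0\}\times M$, hence continuous there.

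I do not expect any genuine obstacle. The only point at which singular behavior could a priori enter is the rescaled evaluation of $f$ as $R\to 0$, and this is precisely controlled by asymptotic quasi-homogeneity. The exponent $(k+\alpha_j)/(2c)$ in the definition of $\bar f_j$ was chosen for exactly this purpose, so the proof is essentially a bookkeeping check that the scalings match, analogous to the one in Proposition \ref{prop-ext-directional} with $s$ replaced by $R^{1/(2c)}$.
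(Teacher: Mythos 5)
Your argument is correct and follows exactly the route the paper intends: the paper states this proposition without proof, relying on the same observation made for Proposition \ref{prop-ext} (the vector field is a product of factors that are individually continuous up to the horizon), with the continuity of $\bar f_j$ at $R=0$ being exactly the content of the remark preceding Lemma \ref{lem-order-intermediate} via asymptotic quasi-homogeneity. Your substitution $\tilde R = R^{-1/2c}$ and the identification of the limit as the quasi-homogeneous principal part $(f_{\alpha,k})_j(h(\theta))$ supply the bookkeeping the paper leaves implicit; nothing is missing.
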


\subsection{Equivalence of desingularized vector fields at infinity}
Now we have three kinds of desingularized vector fields: (\ref{ODE-desing}), (\ref{ODE-desing-directional}) and (\ref{ODE-desing-intermediate}).
Here we consider the equivalence of these vector fields, which will show that qualitative properties of dynamics at infinity are independent of the choice of compactifications.
\par
Now 
\begin{equation*}
\frac{dx}{dt} = A_\alpha (I_n - \kappa^{-1}y_\alpha (\nabla \kappa)^T)f(y) = \kappa^k g(x),
\end{equation*}
\begin{equation*}
\frac{d(s,\theta)}{dt} = \begin{pmatrix}
-s & 0 & \cdots & 0 \\
0 & 1 & \cdots & 0\\
\vdots & \vdots & \ddots & \vdots \\
0 & 0 & \cdots & 1
\end{pmatrix}
B
\begin{pmatrix}
s^{\alpha_1} & 0 & \cdots & 0 \\
0 & s^{\alpha_2} & \cdots & 0\\
\vdots & \vdots & \ddots & \vdots \\
0 & 0 & \cdots & s^{\alpha_n}
\end{pmatrix}
f(y) = s^{-k}g_d(s,\theta)
\end{equation*}
and
\begin{equation*}
\frac{d(R,\theta)}{dt} = \begin{pmatrix}
-R & 0 & \cdots & 0 \\
0 & 1 & \cdots & 0\\
\vdots & \vdots & \ddots & \vdots \\
0 & 0 & \cdots & 1
\end{pmatrix}
B
\begin{pmatrix}
R^{\alpha_1/2c} & 0 & \cdots & 0 \\
0 & R^{\alpha_2/2c} & \cdots & 0\\
\vdots & \vdots & \ddots & \vdots \\
0 & 0 & \cdots & R^{\alpha_n}
\end{pmatrix}
f(y) = R^{-k/2c}g_{int}(R,\theta).
\end{equation*}
The direct calculations indicate that (\ref{ODE-desing}) and (\ref{ODE-desing-directional}) are {\em not} smoothly equivalent since the change of coordinate $C_{{\bf h}\to qP} : (s,\theta_1,\cdots,\theta_{n-1}) \mapsto (x_1,\cdots, x_n)$ is homeomorphic but not diffeomorphic at $\{s=0\}$.
\par 
In what follows, we apply the intermediate compactification $T_{{\bf h},int}$ associated with $T_{\bf h}$ to showing the equivalence of vector fields at infinity.
Since the transformation $C_{{\bf h},int \to qP}$ is locally diffeomorphic by Proposition \ref{prop-diffeo-change-of-coordinates}, then we have
\begin{equation*}
\frac{dx}{dt} = (JC_{{\bf h},int \to qP}) \frac{d(R,\theta)}{dt}.
\end{equation*}
The corresponding desingularized vector fields are 
\begin{equation*}
\frac{dx}{d\tau} = \kappa^{-k}(y(t))\frac{dx}{dt}\quad \text{ and } \quad\frac{d(R,\theta)}{d\tau_{int}} = R^{k/2c}\frac{d(R,\theta)}{dt},
\end{equation*}
respectively, and hence we have
\begin{equation*}
\frac{dx}{d\tau} = (\kappa(y(t))R^{1/2c})^{-k}(JC_{{\bf h},int \to qP}) \frac{d(R,\theta)}{d\tau_{int}}.
\end{equation*}
By the property $R\sim \kappa^{-2c}$ as $p(y)\to \infty$, as stated in (Dir3) in Definition \ref{dfn-directional}, the factor $(\kappa(y(t))R^{1/2c})^{-k}$ is always positive.
We then have the following statement.
\begin{prop}
Let $x = T_{qP}(y)$ be the coordinate in quasi-Poincar\'{e} compactifications and $(R,\theta) = T_{{\bf h},int}(y)$ be the coordinate in the intermediate compactifications associated with directional ones $T_{\bf h}$ of the same type $\alpha$.
Then the desingularized vector fields $\frac{dx}{d\tau} = g(x)$ and $\frac{d(R,\theta)}{d\tau_{int}} = g_{int}(R,\theta)$ are topological equivalent on $\{R\geq 0\}\times M$.
\end{prop}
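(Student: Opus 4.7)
The plan is to deduce topological equivalence by combining the local diffeomorphism of Proposition~\ref{prop-diffeo-change-of-coordinates} with the positivity of the scalar conversion factor between the two time-rescaled vector fields; orbits will then correspond via $C_{{\bf h},int\to qP}$ and the time reparametrization will preserve orientation.

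First, I would make the homeomorphism explicit. Proposition~\ref{prop-diffeo-change-of-coordinates} already gives that $C_{{\bf h},int\to qP}$ is a local diffeomorphism including $R=0$. I would argue that on $\{R>0\}\times M$ the map is a bijection onto $\mathcal{D}$ because it is the composite $T_{qP}\circ T_{{\bf h},int}^{-1}$ of two bijections; on the horizon $\{R=0\}$, the formula $x_i = h_i/(R+H)^{\alpha_i/2c}$ (with $H\geq c_h>0$) restricts to a continuous map onto $\mathcal{E}$, and the parametrization of directions $\theta$ on $M$ determines $x_\ast$ uniquely via the curve construction in~(\ref{curve}). Hence $C_{{\bf h},int\to qP}$ is a homeomorphism from $\{R\geq 0\}\times M$ onto its image in $\overline{\mathcal{D}}$, smooth on the interior and continuous up to the horizon.

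Next I would handle the time rescaling. From the identities displayed just before the statement, the two undesingularized vector fields in $t$-time are linked by the Jacobian:
\begin{equation*}
\frac{dx}{dt} = (JC_{{\bf h},int\to qP})\,\frac{d(R,\theta)}{dt}.
\end{equation*}
Desingularizing on each side by $\tau$ and $\tau_{int}$ respectively and combining gives
\begin{equation*}
g(x) = (\kappa(y)R^{1/2c})^{-k}\,(JC_{{\bf h},int\to qP})\, g_{int}(R,\theta).
\end{equation*}
Property (Dir3) ensures $R\sim \kappa(y)^{-2c}$ as $p(y)\to\infty$, so $\kappa\,R^{1/2c}$ extends continuously to a positive function up to the horizon $\{R=0\}$; for $R>0$ it is positive by construction. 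Therefore the scalar $\lambda(R,\theta):=(\kappa(y)R^{1/2c})^{-k}$ is continuous and strictly positive on all of $\{R\geq 0\}\times M$.

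Finally, I would assemble topological equivalence. Given any integral curve $(R(\tau_{int}),\theta(\tau_{int}))$ of $g_{int}$, its image under $C_{{\bf h},int\to qP}$ is a reparametrized integral curve of $g$ via the positive time change $d\tau = \lambda(R,\theta)\,d\tau_{int}$; solving the ODE $d\tau/d\tau_{int} = \lambda$ along the orbit gives an orientation-preserving reparametrization because $\lambda>0$. Consequently $C_{{\bf h},int\to qP}$ sends oriented orbits of $g_{int}$ to oriented orbits of $g$, which is precisely topological equivalence on $\{R\geq 0\}\times M$. The main technical subtlety is verifying that $\lambda$ does not blow up or degenerate at the horizon; this is handled entirely by the asymptotic $R\sim \kappa^{-2c}$ built into (Dir3), so once that is invoked the remaining steps are formal consequences of Proposition~\ref{prop-diffeo-change-of-coordinates} and the chain rule.
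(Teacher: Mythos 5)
Your proposal is correct and follows essentially the same route as the paper: the paper's own argument is precisely the computation preceding the proposition, relating $\frac{dx}{d\tau}$ and $\frac{d(R,\theta)}{d\tau_{int}}$ through $JC_{{\bf h},int\to qP}$ and the scalar factor $(\kappa(y)R^{1/2c})^{-k}$, whose positivity up to the horizon is guaranteed by (Dir3). Your additional remarks making the homeomorphism onto $\overline{\mathcal{D}}$ and the orientation-preserving orbit correspondence explicit are sound elaborations of what the paper leaves implicit.
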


Next compare the directional compactification $T_{\bf h}$ and the associated intermediate compactification $T_{{\bf h},int}$.
The change of coordinate $C_{{\bf h}\to {\bf h},int}$ is obviously given by
\begin{equation*}
(R,\theta_1,\cdots, \theta_{n-1}) = C_{{\bf h}\to {\bf h},int}(s,\theta_1,\cdots, \theta_{n-1}) = (s^{2c},\theta_1,\cdots, \theta_{n-1})
\end{equation*}
and
\begin{equation*}
JC_{{\bf h}\to {\bf h},int}(s,\theta_1,\cdots, \theta_{n-1}) = {\rm diag}(2cs^{2c-1},1,\cdots, 1).
\end{equation*}
and hence 
\begin{align*}
\frac{dR}{dt} = \frac{d(s^{2c})}{dt} = 2cs^{2c-1} (\tilde g_d(s,\theta)) s = 2c(\tilde g_d(R,\theta)) R.
\end{align*}
This relationship as well as the monotonicity of function $R = s^{2c}$ in $\{s\geq 0\}$ shows that all trajectories in the $(s,\theta)$-coordinate are mapped one-to-one onto the corresponding ones in the $(R,\theta)$-coordinates.
Since $C_{{\bf h}\to {\bf h},int}$ is homeomorphic in $\{s\geq 0\}\times M$, then the vector field $\frac{d(R,\theta)}{d\tau_{int}} = g_{int}(R,\theta)$ and $\frac{d(s,\theta)}{d\tau_d} = g_d(s,\theta)$ are topologically equivalent.
\par
Summarizing the above arguments, we have the following result, which implies that the dynamics at infinity is independent of the choice of (quasi-Poincar\'{e} and directional) compactifications.
\begin{thm}
\label{thm-equivalence}
Let $x = T_{qP}(y)$ be the coordinate in quasi-Poincar\'{e} compactifications and $(s,\theta) = T_{\bf h}(y)$ be the coordinate in directional compactifications of the same type $\alpha$.
Then the desingularized vector fields $\frac{dx}{d\tau} = g(x)$ and $\frac{d(s,\theta)}{d\tau_{int}} = g_{int}(s,\theta)$ are topological equivalent on $\{s\geq 0\}\times M$.
\end{thm}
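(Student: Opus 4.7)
The plan is to establish Theorem \ref{thm-equivalence} by invoking transitivity of topological equivalence, using the intermediate compactification $T_{{\bf h},int}$ as a bridge between the quasi-Poincar\'{e} and the directional pictures. All the hard analysis has already been carried out in the preceding paragraphs: what remains is to combine two equivalences that have been essentially proven and to verify that the boundary behavior at the horizon is compatible.

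First I will record the equivalence between $\frac{dx}{d\tau} = g(x)$ and $\frac{d(R,\theta)}{d\tau_{int}} = g_{int}(R,\theta)$ on $\{R\geq 0\}\times M$. The change of coordinates $C_{{\bf h},int\to qP}$ is a local diffeomorphism including $R=0$ by Proposition \ref{prop-diffeo-change-of-coordinates}. The relation
\begin{equation*}
\frac{dx}{d\tau} = (\kappa(y(t))R^{1/2c})^{-k}(JC_{{\bf h},int\to qP})\frac{d(R,\theta)}{d\tau_{int}}
\end{equation*}
derived above shows that the two desingularized vector fields differ only by the invertible Jacobian of a local diffeomorphism and by the scalar factor $(\kappa R^{1/2c})^{-k}$. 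By property (Dir3) in Definition \ref{dfn-dir-cpt}, we have $R\sim \kappa^{-2c}$ as $p(y)\to \infty$, equivalently as $R\to 0^+$, so $\kappa R^{1/2c}$ extends to a strictly positive, locally bounded and bounded-away-from-zero function on $\{R\geq 0\}\times M$. A strictly positive continuous scalar factor reparametrizes the time variable without changing orbits, hence preserves topological equivalence up to the horizon.

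Next I will handle the equivalence between $\frac{d(R,\theta)}{d\tau_{int}} = g_{int}(R,\theta)$ and $\frac{d(s,\theta)}{d\tau_d} = g_d(s,\theta)$. The change of coordinates $C_{{\bf h}\to{\bf h},int}(s,\theta) = (s^{2c},\theta)$ is a homeomorphism of $\{s\geq 0\}\times M$ onto $\{R\geq 0\}\times M$ that is smooth and a diffeomorphism for $s>0$, and the horizons $\{s=0\}$ and $\{R=0\}$ are mapped identity on the $\theta$-fibers. The relation $\frac{dR}{dt} = 2cs^{2c-1}\frac{ds}{dt}$ together with the fact that $s\mapsto s^{2c}$ is strictly monotone on $[0,\infty)$ shows that oriented trajectories are carried to oriented trajectories, again up to a strictly positive time reparametrization. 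This yields topological equivalence on the closure.

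Composing the two equivalences, $T_{qP} \leftrightarrow T_{{\bf h},int} \leftrightarrow T_{\bf h}$, and using transitivity, gives the desired topological equivalence of the desingularized vector fields on $\{s\geq 0\}\times M$. The one subtle point, and the place where one must be slightly careful, is the first step: the vector field identity only holds directly on $\{R>0\}\times M$, and one must invoke continuous extension of both sides (guaranteed by Proposition \ref{prop-ext} and Proposition \ref{prop-ext-intermediate}) together with the local diffeomorphism property of $C_{{\bf h},int\to qP}$ at $R=0$ and the uniform positivity of $\kappa R^{1/2c}$ near the horizon to conclude that the conjugation and the time rescaling extend continuously through $\{R=0\}$. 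Once this is in place, the rest is routine bookkeeping.
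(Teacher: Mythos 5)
Your proposal is correct and follows essentially the same route as the paper: both use the intermediate compactification as a bridge, establish the equivalence of $g$ and $g_{int}$ via the local diffeomorphism $C_{{\bf h},int\to qP}$ of Proposition \ref{prop-diffeo-change-of-coordinates} together with the positivity of the scalar factor $(\kappa R^{1/2c})^{-k}$ guaranteed by (Dir3), then relate $g_{int}$ and $g_d$ through the monotone homeomorphism $(s,\theta)\mapsto(s^{2c},\theta)$, and conclude by transitivity. Your added attention to the continuous extension of both sides through the horizon is a slightly more explicit treatment of a point the paper handles implicitly, but it is not a different argument.
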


\subsection{Dynamics at infinity}
\label{section-dynamics-at-infinity}
We have shown that desingularized vector fields associated with (\ref{ODE-original}) can be defined including infinity via quasi-Poincar\'{e}, directional and intermediate compactifications and that the qualitative properties of dynamics for these vector fields are independent of the choice of compactifications in the sense of topological equivalence.
In this section, we discuss dynamics at infinity and correspondence to divergent solutions of (\ref{ODE-original}).
Here we state a series of notions and results only for quasi-Poincar\'{e} compactifications, as comparison with \cite{EG2006}.
Obvious modifications for other compactifications yield the corresponding results.
\par
\bigskip
For quasi-Poincar\'{e} compactifications, Proposition \ref{prop-ext} show that dynamics and invariant sets at infinity make sense under time-variable desingularizations.
For example, \lq\lq {\em equilibria at infinity}" defined below are well-defined.

\begin{dfn}[Equilibria at infinity]\rm
We say that the vector field (\ref{ODE-original}) has an {\em equilibrium at infinity} in the direction $x_\ast$ if $x_\ast$ is an equilibrium of (\ref{ODE-desing}) on the horizon $\mathcal{E}$.
\end{dfn}

By using equilibria at infinity, blow-up and grow-up solutions (i.e., divergent solutions with $t_{\max}=\infty$) are described in terms of asymptotic behavior for desingularized vector fields.
\begin{thm}[Divergent solutions and asymptotic behavior]
\label{thm-diverge}
Let $y(t)$ be a solution of (\ref{ODE-original}) with the interval of maximal existence time $(a,b)$, possibly $a=-\infty$ and $b=+\infty$.
Assume that $y$ tends to infinity in the direction $x_\ast$ as $t\to b-0$ or $t\to a+0$.
Then $x_\ast$ is an equilibrium of (\ref{ODE-desing}) on $\mathcal{E}$.
\end{thm}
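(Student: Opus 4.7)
The plan is to transfer the problem to the desingularized system on $\overline{\mathcal{D}}$ and apply a standard $\omega$-limit argument, once it is shown that the rescaled time $\tau$ diverges along the orbit. Set $x(t) := T_{qP}(y(t))$; by the definition of tending to infinity in direction $x_\ast$, one has $x(t)\to x_\ast\in\mathcal{E}$ as $t\to b-0$. Introduce $\tau$ via the time-variable desingularization (\ref{time-desing}). Proposition \ref{prop-ext} guarantees that the trajectory satisfies the continuous ODE $\dot x = g(x)$ on $\overline{\mathcal{D}}$, so the claim reduces to verifying $g(x_\ast)=0$.

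The main obstacle is ruling out the possibility that $\tau$ remains bounded as $t\to b-0$, since only with $\tau\to\infty$ is an $\omega$-limit argument directly available. For this I would differentiate the defining function of $\mathcal{E}$ along the $\tau$-flow. Using (\ref{ODE-desing}) together with the algebraic identity $\sum_{i\in I_\alpha}\beta_i a_i x_i^{2\beta_i-1}\cdot(x_i/\beta_i)=p(x)^{2c}$, a direct computation gives
\[
\frac{d}{d\tau}\bigl(1-p(x)^{2c}\bigr)=-2F(x)\bigl(1-p(x)^{2c}\bigr),\qquad F(x):=\sum_{i\in I_\alpha}\beta_i a_i x_i^{2\beta_i-1}\tilde f_i(x),
\]
so that $u(\tau):=1-p(x(\tau))^{2c}$ satisfies $u(\tau)=u(\tau_0)\exp\bigl(-2\int_{\tau_0}^{\tau}F(x(s))\,ds\bigr)$. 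Since $x(t)\to x_\ast$, the tail of the trajectory sits in a compact neighborhood of $x_\ast$ inside $\overline{\mathcal{D}}$, on which the continuous function $F$ is bounded. If $\tau$ had a finite limit $\tau_\infty$ as $t\to b-0$, the exponential factor would stay bounded away from zero, contradicting $u(\tau)\to 1-p(x_\ast)^{2c}=0$. Hence $\tau\to\infty$ as $t\to b-0$. As a byproduct, the computation also records that $\mathcal{E}$ is invariant for the $\tau$-flow.

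With $\tau\to\infty$ and $x(\tau)\to x_\ast$, continuity of $g$ on $\overline{\mathcal{D}}$ yields $\dot x(\tau)=g(x(\tau))\to g(x_\ast)$. If $g(x_\ast)\neq 0$, set $e:=g(x_\ast)/|g(x_\ast)|$; then $e\cdot\dot x(\tau)\geq |g(x_\ast)|/2$ for all sufficiently large $\tau$, so $e\cdot x(\tau)$ grows without bound, contradicting $x(\tau)\to x_\ast$. Therefore $g(x_\ast)=0$, i.e., $x_\ast$ is an equilibrium of (\ref{ODE-desing}) on $\mathcal{E}$. The case $t\to a+0$ is handled identically by running the $\tau$-flow backwards in time.
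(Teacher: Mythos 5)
Your proof is correct, and its overall skeleton (first show $\tau\to\infty$, then conclude that the limit point of a trajectory of a continuous vector field as $\tau\to\infty$ must be an equilibrium) matches the paper's. The interesting difference is in how you rule out $\tau$ staying bounded. The paper argues by maximality: if $\tau\to\tau_0<\infty$, then $x_\ast$ is a regular initial condition for (\ref{ODE-desing}), the solution extends locally uniquely past $\tau_0$, and this is claimed to contradict the maximality of $b$; that argument implicitly needs local uniqueness for $\dot x=g(x)$ at $x_\ast\in\mathcal{E}$ (so more than continuity of $g$), and the passage back from the extended $\tau$-solution to a $t$-solution beyond $b$ is delicate precisely because $\kappa=\infty$ on the horizon. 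Your route instead exploits the exact evolution law $\frac{d}{d\tau}(1-p(x)^{2c})=-2F(x)(1-p(x)^{2c})$ (this is Theorem \ref{thm-dyn-infty}-3 of the paper, up to the factor $2$, which your computation gets right and which the paper's displayed statement drops), together with boundedness of $F$ near $x_\ast$, to show that $1-p(x(\tau))^{2c}$ cannot reach $0$ in finite $\tau$-time. This buys you a self-contained argument that uses only continuity of $\tilde f$ on $\overline{\mathcal{D}}$ and avoids any appeal to uniqueness or to re-inverting the time change at the horizon; it also gives, as you note, the invariance of $\mathcal{E}$ for free. Your explicit final step ($e\cdot\dot x\geq|g(x_\ast)|/2$ forcing $e\cdot x(\tau)\to\infty$) fills in a detail the paper leaves implicit. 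The one cosmetic caveat is that $\overline{\mathcal{D}}$ need not be compact when $I_\alpha\neq\{1,\dots,n\}$, so you should bound $F$ on a closed ball about $x_\ast$ intersected with $\overline{\mathcal{D}}$ rather than on all of $\overline{\mathcal{D}}$ --- which is in fact exactly what you do.
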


\begin{proof}
The property $b = \sup \{t\mid y(t) \text{ is a solution of (\ref{ODE-original})}\}$ corresponds to the property that
\begin{equation*}
\sup \{\tau\mid x(\tau) = T(y(t)) \text{ is a solution of (\ref{ODE-desing}) in the time variable $\tau$}\} = \infty.
\end{equation*}
Indeed, if not, then $\tau \to \tau_0 < \infty$ and $\lim_{\tau \to \tau_0-0}x(\tau) = x_\ast$ as $t\to b-0$.
The condition $x(\tau) = x_\ast$ is the regular initial condition of (\ref{ODE-desing}).
The vector field (\ref{ODE-desing}) with the new initial point $x(\tau) = x_\ast$ thus has a locally unique solution $x(\tau)$ in a neighborhood of $\tau_0$, which contradicts the maximality of $b$.
Therefore we know that $\tau \to +\infty$ as $t\to b-0$.
Since $\lim_{\tau \to \infty}x(\tau) = x_\ast$, then $x_\ast$ is an equilibrium of (\ref{ODE-desing}) on $\mathcal{E}$.
The similar arguments show that $t\to a+0$ corresponds to $\tau \to -\infty$ and that the same consequence holds true.
\end{proof}

This theorem gives a description of divergent solutions from the viewpoint of dynamical systems; namely, assuming the $C^1$-smoothness of desingularized vector fields (\ref{ODE-desing}) on $\overline{\mathcal{D}}$, divergent solutions in the direction $x_\ast$ correspond to trajectories of (\ref{ODE-desing}) on the stable manifold $W^s(x_\ast)$ of the equilibrium $x_\ast$.
This correspondence opens the door to applications of various results in dynamical systems to divergent solutions.
Before moving to the next section, we gather several properties of dynamics at infinity, which will be useful to concrete studies.

\begin{thm}[Dynamics at infinity]
\label{thm-dyn-infty}
\begin{enumerate}
\item The horizon $\mathcal{E}$ is an invariant manifold of (\ref{ODE-desing}).
\item Dynamics of (\ref{ODE-desing}) on $\mathcal{E}$ are dominated by the following vector field:
\begin{align*}
\dot x_i &= (\tilde f_{\alpha,k})_i - \left(\sum_{j\in I_\alpha} \beta_j a_j x^{2\beta_j - 1}(\tilde f_{\alpha,k})_j \right) \frac{x_i}{\beta_i},\quad i=1,\cdots, n.
\end{align*}
\item Time evolution of $1-p(x)^{2c}$ in $\tau$-time scale is dominated by
\begin{equation*}
\frac{d}{d\tau} (1-p(x)^{2c}) =  -\left(\sum_{j\in I_\alpha} \beta_j a_j x_j^{2\beta_j-1}\tilde f_j\right) (1-p(x)^{2c}).
\end{equation*}
\item Assume that the vector field $f$ in (\ref{ODE-original}) is quasi-homogeneous of type $(\alpha_1,\cdots, \alpha_n)$ and order $k+1$. Then the desingularized vector field $g$ given in (\ref{ODE-desing}) satisfies
\begin{equation}
\label{symmetry}
g_i((-1)^{\alpha_1} x_1,\cdots, (-1)^{\alpha_n} x_n) = (-1)^{k+\alpha_i} g_i(x_1,\cdots, x_n).
\end{equation}
In particular, for any asymptotically quasi-homogeneous vector field $f$ in (\ref{ODE-original}) of type $(\alpha_1,\cdots, \alpha_n)$ and order $k+1$, the desingularized vector field $g$ satisfies (\ref{symmetry}) on $\mathcal{E}$.
\end{enumerate}
\end{thm}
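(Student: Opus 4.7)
The plan is to handle Parts 1 and 3 together, then Part 2, then Part 4, since Parts 1 and 3 are intertwined and Part 2 is essentially an evaluation at the boundary. Throughout, I write $S := \sum_{j \in I_\alpha} \beta_j a_j x_j^{2\beta_j - 1} \tilde f_j$ for the scalar that keeps recurring in the desingularized field (\ref{ODE-desing}).

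For Parts 1 and 3, I would simply differentiate $p(x)^{2c} = \sum_{i \in I_\alpha} a_i x_i^{2\beta_i}$ along the flow of (\ref{ODE-desing}). Since only the indices in $I_\alpha$ contribute, I substitute the formula $\dot x_i = \tilde f_i - S\, x_i/\beta_i$ for $i \in I_\alpha$. The first piece produces $2S$ and the second produces $-2S \sum_{i \in I_\alpha} a_i x_i^{2\beta_i} = -2S\, p(x)^{2c}$; collecting these yields $\tfrac{d}{d\tau}p(x)^{2c} = 2S(1-p(x)^{2c})$, hence Part 3. Because the right-hand side vanishes on $\{p(x)=1\}$, the horizon $\mathcal{E}$ is invariant, which is Part 1. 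The only mild subtlety is that differentiating $x_i^{2\beta_i}$ requires $\beta_i > 0$, but that is exactly the characterization of $i \in I_\alpha$.

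For Part 2, I would observe that a point of $\mathcal{E}$ corresponds to $\kappa(y) \to \infty$ in the original coordinate. By the definition (\ref{f-tilde}) of $\tilde f_j$ and the assumed asymptotic quasi-homogeneity of $f$ of type $\alpha$ and order $k+1$, we have $\tilde f_j(x) \to (f_{\alpha,k})_j(x)$ uniformly on compact subsets of $\mathcal{E}$. Since $f_{\alpha,k}$ is itself quasi-homogeneous, the corresponding $(\tilde f_{\alpha,k})_j$ coincides with $(f_{\alpha,k})_j$. Substituting this limit into (\ref{ODE-desing}) gives the claimed vector field on $\mathcal{E}$.

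Part 4 is the one that will require the most careful bookkeeping of signs, and I expect it to be the main obstacle. The idea is to apply the quasi-homogeneity of $f$ with the scalar $R=-1$ to get $f_j((-1)^{\alpha_1}x_1,\dots,(-1)^{\alpha_n}x_n) = (-1)^{k+\alpha_j} f_j(x)$. I would first verify that $\kappa$ is invariant under the substitution $x_i \mapsto (-1)^{\alpha_i} x_i$: this reduces to checking $(-1)^{2\alpha_i \beta_i} = 1$ for $i \in I_\alpha$, which follows from $\alpha_i \beta_i = c \in \mathbb{N}$ via (\ref{LCM}). Consequently $\tilde f_j((-1)^{\alpha_1}x_1,\dots,(-1)^{\alpha_n}x_n) = (-1)^{k+\alpha_j}\tilde f_j(x)$. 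Plugging into (\ref{ODE-desing}), the tricky part is the second term: the factor $x_j^{2\beta_j-1}$ contributes $(-1)^{\alpha_j(2\beta_j - 1)} = (-1)^{2c}\cdot(-1)^{-\alpha_j} = (-1)^{-\alpha_j}$, which when combined with $(-1)^{k+\alpha_j}$ from $\tilde f_j$ and $(-1)^{\alpha_i}$ from the outer $x_i$ collapses to the overall factor $(-1)^{k+\alpha_i}$, independently of $j$, so the whole sum extracts cleanly. The case $i\not\in I_\alpha$ is immediate since $g_i = \tilde f_i$ there and $\alpha_i = 0$. Finally, for an asymptotically quasi-homogeneous $f$ restricted to $\mathcal{E}$, I would invoke Part 2 to replace $\tilde f_j$ by $(f_{\alpha,k})_j$ and apply the quasi-homogeneous case just proved.
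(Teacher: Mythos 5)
Your proposal is correct and follows essentially the same route as the paper: direct differentiation of $p(x)^{2c}$ along the flow of (\ref{ODE-desing}) for Parts 1 and 3, the limit $\tilde f \to \tilde f_{\alpha,k}$ coming from asymptotic quasi-homogeneity and (A1) for Part 2, and the same sign bookkeeping for Part 4 via $\kappa(\iota_\alpha(x))=\kappa(x)$ and $\alpha_j\beta_j = c$. The only remark worth making is that your (correct) computation gives $\frac{d}{d\tau}\left(1-p(x)^{2c}\right) = -2S\left(1-p(x)^{2c}\right)$, which agrees with the paper's own intermediate calculation of $\frac{1}{2}\frac{d}{d\tau}p(x)^{2c}$ but differs by a factor of $2$ from the constant displayed in the statement of Part 3; this does not affect the invariance of $\mathcal{E}$ or any later use of the estimate.
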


\begin{proof}
1. We prove that $\frac{d}{d\tau}p(x)^{2c} = 0$ on $\mathcal{E} = \{x\in \mathbb{R}^n\mid p(x) = 1\}$.
Direct calculations yield
\begin{align*}
\frac{1}{2}\frac{d}{d\tau}p(x)^{2c} &= \frac{1}{2}\frac{d}{d\tau}\left(\sum_{j\in I_\alpha} a_j x_j^{2\beta_j}\right) = \sum_{j\in I_\alpha} \beta_j a_jx_j^{2\beta_j-1}\frac{dx_j}{d\tau} \\
	&= \sum_{j\in I_\alpha} \beta_j a_j x_j^{2\beta_j-1}\left\{\tilde f_j - \left(\sum_{j\in I_\alpha} \beta_i a_i x_i^{2\beta_i-1}\tilde f_i\right)\frac{x_j}{\beta_j}\right\} \\
	&= \sum_{j\in I_\alpha} \beta_j a_j x_j^{2\beta_j-1}\tilde f_j  - \sum_{j\in I_\alpha} a_j x_j^{2\beta_j} \left(\sum_{j\in I_\alpha} \beta_i a_i x_i^{2\beta_i-1}\tilde f_i\right) \\
	&= \sum_{j\in I_\alpha} \beta_j a_j x_j^{2\beta_j-1}\tilde f_j  - \left(\sum_{j\in I_\alpha} \beta_i a_i x_i^{2\beta_i-1}\tilde f_i\right) = 0\quad \text{ since }p(x) = 1.
\end{align*}
\par
\bigskip
2. It immediately follows from
$\lim_{p(x)\to 1}\tilde f(x) = \tilde f_{\alpha, k}$
by the asymptotic quasi-homogeneity of $f$ and (A1) in Proposition \ref{prop-adm}.
\par
\bigskip
3. It immediately follows from calculations in the proof of statement 1.
\par
\bigskip
4. First observe that $\kappa((-1)^{\alpha_1} x_1,\cdots, (-1)^{\alpha_n} x_n) = \kappa(x_1,\cdots, x_n)$.
Second, for all $i$, the function $\tilde f_i$ is quasi-homogeneous of type $(\alpha_1,\cdots, \alpha_n)$ and order $k+\alpha_i$ by assumption.
Then, for each $i$, we have
\begin{align*}
g_i((-1)^{\alpha_1} x_1,\cdots, (-1)^{\alpha_n} x_n) &= \tilde f_i((-1)^{\alpha_1} x_1,\cdots, (-1)^{\alpha_n} x_n)\\
	& - \left(\sum_{j\in I_\alpha} \beta_j a_j ((-1)^{\alpha_j} x_j)^{2\beta_j - 1}\tilde f_j((-1)^{\alpha_1} x_1,\cdots, (-1)^{\alpha_n} x_n) \right) \frac{(-1)^{\alpha_i} x_i}{\beta_i}\\
	&= (-1)^{k+\alpha_i}\tilde f_i - (-1)^{\alpha_i} \left(\sum_{j\in I_\alpha} (-1)^{\alpha_j} \beta_j a_j x_j^{2\beta_j - 1}(-1)^{k+\alpha_j} \tilde f_j \right) \frac{x_i}{\beta_i}\\
	&= (-1)^{k+\alpha_i}\tilde f_i - (-1)^{k+\alpha_i} \left(\sum_{j\in I_\alpha} \beta_j a_j x_j^{2\beta_j - 1} \tilde f_j \right) \frac{x_i}{\beta_i}\\
	&= (-1)^{k+\alpha_i}g_i(x_1,\cdots, x_n)
\end{align*}
and complete the proof.
\end{proof}

\begin{rem}\rm
Theorem \ref{thm-dyn-infty}-4 shows that the vector field at infinity is equivariant with respect to the symmetry $\iota_\alpha (x)$ defined as
\begin{equation*}
(x_1,\cdots, x_n) \mapsto \iota_\alpha(x) \equiv ((-1)^{\alpha_1} x_1, \cdots, (-1)^{\alpha_n} x_n).
\end{equation*}
on $\mathcal{E}$. 
In particular, if $x\in \mathcal{E}$ is an equilibrium of (\ref{ODE-desing}), then so is $\iota_\alpha(x)$.
In the homogeneous case, the symmetry is just $\iota_\alpha(x) = -x$, as stated in Proposition 2.6 of \cite{EG2006}. 
\end{rem}

\section{Blow-up solutions and their asymptotic behavior}
\label{section-blow-up}

Theorem \ref{thm-diverge} indicates that trajectories for (\ref{ODE-desing}) tending to equilibria at infinity as $\tau\to \infty$ are divergent solutions of original system (\ref{ODE-original}).
On the other hand, Theorem \ref{thm-diverge} itself does not distinguish blow-up solutions from grow-up solutions.
Under additional assumptions to equilibria at infinity, we can characterize blow-up solutions from the viewpoint of dynamical systems.
In this section, we give criteria of blow-ups which are sufficient to apply in the following successive sections.
\par
As Section \ref{section-dynamics-at-infinity}, we only show results for quasi-Poincar\'{e} compactifications.
Note that all the following arguments are independent of the choice of coordinates and, thanks to topological equivalence; Theorem \ref{thm-equivalence}, all statements are also valid for directional and intermediate compactifications with suitable modifications.

\subsection{Stationary blow-up}
\label{section-stationary}
Blow-up criterion with homogeneous compactification is discussed in \cite{EG2006}.
Roughly speaking, preceding results stated there claim that {\em linearly stable equilibria at infinity induce blow-up solutions}.
In general, however, equilibria at infinity may admit unstable directions; namely, the Jacobian matrix $Jg$ of (\ref{ODE-desing}) at those points may admit eigenvalues with positive real parts.
Global trajectories asymptotic to such equilibria at infinity will be referred to as {\em unstable grow-up or blow-up solutions}.
The following theorem is one of our main results, which gives criteria of blow-ups not only for stable but also unstable blow-up solutions.

\begin{thm}[Stationary blow-up]
\label{thm-stationary-blowup}
Assume that (\ref{ODE-original}) has an equilibrium at infinity in the direction $x_\ast$.
Suppose that the desingularized vector field $g$ in (\ref{ODE-desing}) is $C^1$ on an open set $V\subset \overline{\mathcal{D}}$ with $\mathcal{E}\subset V$, and that $x_\ast$ is hyperbolic with $n_s > 0$ (resp. $n_u = n-n_s$) eigenvalues of $Jg(x_\ast)$ with negative (resp. positive) real parts.
Then, if the solution $y(t)$ of (\ref{ODE-original}) whose image $x = T_{qP}(y)$ is on $W^s(x_\ast)$ for $g$, $t_{\max} < \infty$ holds; namely, $y(t)$ is a blow-up solution.
Moreover,
\begin{equation*}
p(y(t)) \sim c(t_{\max} - t)^{-1/k}\quad \text{ as }\quad t\to t_{\max},
\end{equation*}
where $k+1$ is the order of asymptotically quasi-homogeneous vector field $f$.
Finally, if the $i$-th component $(x_\ast)_i$ of $x_\ast$ with $i\in I_\alpha$ is not zero, then we also have
\begin{equation*}
y_i(t) \sim c(t_{\max} - t)^{-\alpha_i /k}\quad \text{ as }\quad t\to t_{\max}.
\end{equation*}
\end{thm}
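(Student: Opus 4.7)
The plan is to transfer the blow-up question to the asymptotics of the \emph{horizon-defining} quantity $u(\tau):=1-p(x(\tau))^{2c}$ along the $\tau$-trajectory $x(\tau)=T_{qP}(y(t))$, extract exponential decay of $u$ from the hyperbolicity hypothesis, and pull this decay back to the original time via (\ref{time-desing}). The crucial identity is
\begin{equation*}
\kappa(T_{qP}^{-1}(x))^{2c}=\frac{1}{1-p(x)^{2c}}=\frac{1}{u},
\end{equation*}
which rewrites time rescaling as $dt=u^{k/2c}\,d\tau$. Since the orbit lies on $W^s(x_\ast)$ with image in $\mathcal{D}$, we have $x(\tau)\to x_\ast$ as $\tau\to +\infty$, $u(\tau)>0$, and $u(\tau)\to 0$.

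Next I would invoke Theorem \ref{thm-dyn-infty}(3), which gives $\dot u=-c(\tau)u$ with
\begin{equation*}
c(\tau)=\sum_{j\in I_\alpha}\beta_j a_j x_j(\tau)^{2\beta_j-1}\tilde f_j(x(\tau))\;\longrightarrow\; c_\ast:=\sum_{j\in I_\alpha}\beta_j a_j (x_\ast)_j^{2\beta_j-1}(\tilde f_{\alpha,k})_j(x_\ast)
\end{equation*}
by asymptotic quasi-homogeneity of $f$ and continuity of $\tilde f_j$ on $\overline{\mathcal{D}}$. The main obstacle is to justify $c_\ast>0$. Here is the argument: since $u$ is a defining function for the invariant hypersurface $\mathcal{E}$, linearizing $\dot u=-c(\tau)u$ at $x_\ast$ shows that $-c_\ast$ is the eigenvalue of $Jg(x_\ast)$ in the direction transverse to $T_{x_\ast}\mathcal{E}$. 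Hyperbolicity excludes $c_\ast=0$. If $c_\ast<0$, the transverse direction is unstable, so the full stable subspace $E^s\subset T_{x_\ast}\mathcal{E}$; by the stable manifold theorem and invariance of $\mathcal{E}$, $W^s(x_\ast)$ would then lie inside $\mathcal{E}$ near $x_\ast$, forcing $u\equiv 0$ on $W^s(x_\ast)$ and contradicting $u(\tau)>0$. Hence $c_\ast>0$.

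Third, comparison for the scalar ODE $\dot u=-c(\tau)u$ with $c(\tau)\to c_\ast>0$ yields $u(\tau)=A\,e^{-c_\ast\tau}(1+o(1))$ for some $A>0$. Substituting into $dt=u^{k/2c}\,d\tau$,
\begin{equation*}
t_{\max}-t(\tau_0)=\int_{\tau_0}^{\infty}u(\tau)^{k/2c}\,d\tau\;\sim\;\frac{2c}{kc_\ast}A^{k/2c}e^{-kc_\ast\tau_0/2c}<\infty,
\end{equation*}
which proves $t_{\max}<\infty$. Combining with $\kappa^k=u^{-k/2c}\sim A^{-k/2c}e^{kc_\ast\tau/2c}$ gives $\kappa(y(t))^k\,(t_{\max}-t)\to 2c/(kc_\ast)$, that is, $\kappa(y(t))\sim C(t_{\max}-t)^{-1/k}$ for a positive constant $C$. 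The first asymptotic $p(y(t))\sim c(t_{\max}-t)^{-1/k}$ then follows from property (A1) in Proposition \ref{prop-adm}.

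Finally, since $y_i(t)=\kappa(y(t))^{\alpha_i}x_i(\tau)$ with $x_i(\tau)\to(x_\ast)_i\neq 0$ for $i\in I_\alpha$, we immediately obtain
\begin{equation*}
y_i(t)\sim(x_\ast)_i\,\kappa(y(t))^{\alpha_i}\sim C_i(t_{\max}-t)^{-\alpha_i/k},
\end{equation*}
which completes the proof. The entire argument is essentially a one-dimensional asymptotic analysis of $u(\tau)$ once hyperbolicity is converted into positivity of the transverse contraction rate $c_\ast$.
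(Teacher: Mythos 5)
Your route is genuinely different from the paper's. The paper linearizes the flow near $x_\ast$ via Hartman--Grobman, extracts a convergence rate for $x(\tau)\to x_\ast$, and only then expands $1-p(x(\tau))^{2c}$ binomially around $x_\ast$; you instead work directly with the exact transport identity $\dot u=-c(\tau)u$ for the defining function $u=1-p^{2c}$ from Theorem \ref{thm-dyn-infty}(3), and identify the limiting coefficient $c_\ast$ with minus the eigenvalue of $Jg(x_\ast)$ transverse to the invariant horizon. This is a clean reduction that sidesteps the delicate issue of transferring asymptotic rates through a merely topological conjugacy. Your sign argument for $c_\ast$ is sound: differentiating the identity $\nabla(1-p^{2c})\cdot g=-c(x)(1-p^{2c})$ at $x_\ast$ gives $Jg(x_\ast)^T\nabla(1-p^{2c})(x_\ast)=-c_\ast\nabla(1-p^{2c})(x_\ast)$, so $-c_\ast\in\Spec(Jg(x_\ast))$ and hyperbolicity excludes $c_\ast=0$; for $c_\ast<0$ you could argue more cheaply than via stable manifolds, since then $\dot u=-c(\tau)u>0$ eventually, contradicting $u>0$ and $u\to0$.

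There is, however, one step whose justification fails as written: ``comparison for the scalar ODE $\dot u=-c(\tau)u$ with $c(\tau)\to c_\ast>0$ yields $u(\tau)=Ae^{-c_\ast\tau}(1+o(1))$.'' From $u(\tau)=u(\tau_0)\exp(-\int_{\tau_0}^\tau c)$, this asymptotic requires $\int^\infty(c(\tau)-c_\ast)\,d\tau$ to converge; mere convergence $c(\tau)\to c_\ast$ only gives $u(\tau)=e^{-c_\ast\tau+o(\tau)}$ (take $c(\tau)=c_\ast+1/\tau$ to get $u\sim A\tau^{-1}e^{-c_\ast\tau}$). Repairing this literally would need the exponential convergence of $x(\tau)\to x_\ast$ on the stable manifold together with a Lipschitz or H\"{o}lder bound on $x\mapsto c(x)$ near $x_\ast$, which does not follow from the continuity of $\tilde f$ alone. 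Fortunately the theorem's conclusions do not need the refined asymptotics of $u$: writing $t_{\max}-t(\tau_0)=\int_{\tau_0}^\infty u(\tau)^{k/2c}\,d\tau$ and substituting $v=\int_{\tau_0}^\tau c(s)\,ds$ gives
\begin{equation*}
\frac{t_{\max}-t(\tau_0)}{u(\tau_0)^{k/2c}}=\int_0^\infty e^{-kv/2c}\,\frac{dv}{c(\tau(v))}\;\longrightarrow\;\frac{2c}{kc_\ast}\qquad(\tau_0\to\infty),
\end{equation*}
using only $c(\tau)\to c_\ast>0$ and dominated convergence. This already yields $t_{\max}<\infty$, the limit $\kappa(y(t))^k(t_{\max}-t)\to 2c/(kc_\ast)$, and hence both stated blow-up rates via (A1) and $y_i=\kappa^{\alpha_i}x_i$. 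So replace the intermediate exponential asymptotics of $u$ by this limit and your argument closes.
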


\begin{proof}
First note that $g$ as well as the generated flow is assumed to be $C^1$ on $V$, which indicates that the vector fields has an extension into a neighborhood of $V$ in $\mathbb{R}^n$.
We can choose an open neighborhood $\tilde V$ of $x_\ast$ in $\mathbb{R}^n$ and a $C^1$-change of coordinate $h : (z^u, z^s)\mapsto x$ in $\tilde V$ such that $h(0) = x_\ast\in \mathbb{R}^n$ and that $(\ref{ODE-desing})$ in $V$ is mapped into
\begin{align}
\label{eq-s-linearized}
&\dot z^u = \Lambda^u z^u,\quad \dot z^s = \Lambda^s z^s,\\
\notag
&\Lambda^u = {\rm diag}(J(\lambda^u_1; m^u_1), \cdots J(\lambda^u_{k_u}; m^u_{k_u})),\quad \Lambda^s = {\rm diag}(J(\lambda^s_1; m^s_1), \cdots J(\lambda^s_{k_s}; m^s_{k_s})),
\end{align}
by Hartman-Grobman's Theorem, where $\{\lambda_i^u\}_{i=1}^{k_u}$ and $\{\lambda_i^s\}_{i=1}^{k_s}$ are distinct eigenvalues of $Jg(x_\ast)$ with positive and negative real parts, respectively.
$J(\lambda; k)$ denotes the $k$-dimensional Jordan block matrix of $\lambda$, and $\{m^u_k\}_{k=1}^{k_u}$ and $\{m^s_k\}_{k=1}^{k_s}$ denote the dimension of Jordan block matrices associated with $\{\lambda^u_k\}$ and $\{\lambda^s_k\}$, respectively.
Obviously $\sum_{k=1}^{k_u} m^u_k = n_u$ and $\sum_{k=1}^{k_s} m^s_k = n_s$ are required.
\par
Note that, if $\{\lambda_i^u\}_{i=1}^{n_u}$ and/or $\{\lambda_i^u\}_{i=1}^{n_u}$ contain complex conjugate eigenvalues, say $\lambda_i^u$ and $\lambda_{i+1}^u = \overline{\lambda_i^u}$, then the corresponding diagonal part ${\rm diag}(\lambda_i^u, \lambda_{i+1}^u)$ of $\Lambda^u$ is replaced by
\begin{equation*}
\begin{pmatrix}
{\rm Re}\lambda_i^u & -{\rm Im}\lambda_i^u\\
{\rm Im}\lambda_i^u & {\rm Re}\lambda_i^u
\end{pmatrix}.
\end{equation*}
A similar replacement is operated to $\Lambda^s$.
All arguments below do not change under these replacements.
\par
Our focus here is the stable manifold $W^s(x_\ast)$ of $x_\ast$, which is transformed via the conjugacy $h$ into $\{z^u=0\}\cap h(V)\subset h(\tilde V)$.
Solutions on the stable manifold $W^s(x_\ast) = W^s(h(0))$ are thus written by
\begin{equation*}
z^u = 0,\quad z^s(\tau) = e^{\Lambda^s \tau} z^s_0,
\end{equation*}
in the $z$-coordinate, where $(0, z^s_0)$ is an initial position of solution, which may be assumed to be in $h(\tilde V)$.
Consequently, we have
\begin{equation*}
z(\tau) = c_1 \tau^{m^s_1-1} e^{{\rm Re}\lambda^s_1\tau}({\bf 1}+o({\bf 1}))\quad \text{ as }\tau \to \infty
\end{equation*}
with some constant $c_1$, where ${\bf 1} = (1,1,\cdots, 1)^T\in \mathbb{R}^n$\footnote
{
In this estimate, the leading stable eigenvalue $\lambda^s_1$ and its multiplicity is essential.
Indeed, if $\lambda^s_1$ is simple and $\lambda^s_2$ is double (with geometric multiplier $1$), for example, then the asymptotic behavior of corresponding eigendirections has the order $O(e^{-\lambda^s_1 \tau})$ and $O(\tau e^{-\lambda^s_2 \tau})$, respectively.
However, we immediately have $\tau e^{-(\lambda^s_2 - \lambda^s_1) \tau} \to 0$ as $\tau \to \infty$, since $\lambda^s_2 > \lambda^s_1$, which indicates that $\tau e^{-\lambda^s_2 \tau} = o(e^{-\lambda^s_1 \tau})$.
The asymptotic behavior of trajectories is thus dominated by decays associated with $\lambda^s_1$.
}. 
Hence, thanks to the conjugacy $h = id + v$ with bounded continuous function $v$,
\begin{equation}
\label{asymptotic-x}
x(\tau) - x_\ast = h(z(\tau)) = c_2 \tau^{m^s_1-1} e^{-m\tau}({\bf 1}+o({\bf 1}))\quad \text{ as }\tau \to \infty
\end{equation}
with $-m \leq {\rm Re}\lambda^s_1 < 0$ and some constant $c_2$.
\par
\bigskip
Turn to the quantity $1-p^{2c}(x(\tau))$ for .
Near $x_\ast$ with $p(x_\ast) = 1$,
\begin{align}
\notag
1-p^{2c}(x(\tau)) &= 1-\sum_{i\in I_\alpha} a_i (x_i- (x_\ast)_i + (x_\ast)_i)^{2\beta_i} = 1-\sum_{i\in I_\alpha} \sum_{k_i = 1}^{2\beta_i} a_i {}_{2\beta_i}C_{k_i} (x_\ast)_i^{2\beta_i - k_i}(x_i- (x_\ast)_i)^{k_i}\\
\label{asym-radius}
	&= c_3 \tau^{m^s_1-1} e^{-\ell \tau}(1+o(1))
\end{align}
for some $-\ell \leq -m < 0$, where the terms $1$ and $\sum_{i\in I_\alpha} a_i(x_\ast)_i^{2\beta_i}$ are cancelled out.
Recall that we have for a certain initial point $t_0$,
\begin{align*}
t-t_0 &= \int_0^\tau \frac{d\eta}{\kappa(y(\eta))^{k}}\\
	&= \int_0^\infty \frac{d\eta}{\kappa(y(\eta))^{k}} - \int_\tau^\infty \frac{d\eta}{\kappa(y(\eta))^{k}}
\end{align*}
and the integrals converge due to (\ref{asym-radius})\footnote
{
This estimate holds only for trajectories corresponding to those on the stable manifold of $x_\ast$.
}.
In particular, $t_{\max} < \infty$ holds and the solution is a blow-up solution.
Thus
\begin{equation*}
t_{\max} - t = \int_\tau^\infty \frac{d\eta}{\kappa(y(\eta))^{k}} = c_4 \tau^{k(m^s_1-1)/2c} e^{- (k\ell / 2c) \tau}(1+o(1))\quad \text{ as } \tau \to \infty.
\end{equation*}
Since $dt/d\tau > 0$ on trajectories on $W^s(x_\ast)$ for (\ref{ODE-desing}), this relation is then solvable for $\tau$ and, together with (\ref{asym-radius}), it yields that
\begin{equation*}
1-p^{2c}(x(\tau)) \sim (t_{\max}-t)^{2c/k}
\end{equation*}
and
\begin{equation*}
p(y(t)) = p(x(\tau))\kappa(x(\tau)) = \frac{p(x(\tau))}{(1-p(x(\tau))^{2c})^{1/2c}} \sim c(t_{\max} -t)^{-1/k}
\end{equation*}
as $t\to t_{\max}$, where $c_3, c_4$ and $c$ are certain constants.
\par
It immediately follows from the above asymptotics that, for $y_i$ tending to $x_\ast$ with $(x_\ast)_i \not = 0$,
\begin{equation*}
y_i(t) = \kappa(y(t))^{\alpha_i} x_i(\tau) \sim p(y(t))^{\alpha_i} x_i(\tau) \sim c(t_{\max} -t)^{-\alpha_i/k}\quad \text{ as }t\to t_{\max} .
\end{equation*}
\end{proof}

Theorem \ref{thm-stationary-blowup} generalizes the result in \cite{EG2006} in the sense that the blow-up criteria are valid even for quasi-Poincar\'{e} compactifications, and that the blow-up behavior is characterized by not only stable equilibria on $\mathcal{E}$, but also hyperbolic ones.
We then have the slogan: {\em hyperbolic equilibria at infinity induce blow-up solutions} under the $C^1$-smoothness of desingularized vector fields on $\overline{\mathcal{D}}$.
\par
Remark that the {\em non-resonance condition} of eigenvalues, which is assumed in \cite{EG2006}, is not actually necessary.
\par
Note that the blow-up rate of each component reflects the type $\alpha$ of asymptotically quasi-homogeneous vector field $f$, unlike homogeneous compactifications.
In Sections \ref{section-sing-shock} and \ref{section-two-fluid}, we observe various blow-up solutions in concrete systems, some of which involve equilibria at infinity of saddle type.
Such \lq\lq unstable" blow-up solutions are expected to be the trigger of other singular nature in systems like {\em singular shock profiles} (e.g., \cite{KK1990, SSS1993, S2004}).
\par

\begin{rem}[Lack of smoothness]\rm
\label{rem-extension}
In Theorem \ref{thm-stationary-blowup}, we assumed the $C^1$ smoothness of the desingularized vector field $g$ on $\overline{\mathcal{D}}$.
In fact, in the case of quasi-Poincar\'{e} compactifications, the vector field $g$ may lose its smoothness on $\mathcal{E}$, even if $f$ is arbitrarily smooth.
It is because of the presence of radicals in (quasi-)Poincar\'{e} compactifications.
For example, if $f$ is a polynomial vector field $f(y) = (f_1(y), \cdots, f_n(y))$ with 
\begin{equation}
\label{poly-vf}
f_j(y) = \sum_{i=0}^{k_j} \sum_{i_1,\cdots, i_n\geq 0, i_1+ \cdots +i_n=i} c_{ji_1\cdots i_n}y_i^{i_1}\cdots y_n^{i_n},\quad c_{ji_1\cdots i_n}\not = 0\text{ for some }i_1,\cdots, i_n \text{ with }\sum_{l=1}^n i_l = k_j,
\end{equation}
the loss of smoothness of $g$ may occur. 
Indeed, the form of $\tilde f$ indicate that the desingularized vector field $g_i$ may contain $\kappa^{-\gamma}$ for some $\gamma \in \mathbb{N}$.
A direct calculation yields
\begin{equation*}
\frac{\partial \kappa^{-\gamma}}{\partial x_j} = \frac{\partial }{\partial x_j}\left(1-\sum_{i=1}^n a_i x_i^{2\beta_i}\right)^{\gamma/2c} = -\frac{2\beta_j a_j \gamma}{2c}\left(1-\sum_{i=1}^n a_i x_i^{2\beta_i}\right)^{(\gamma-2c)/2c} x_j^{2\beta_j-1}\quad \text{ with } j\in I_\alpha,
\end{equation*}
which is singular on $\mathcal{E}$ if $\gamma - 2c < 0$.
In the case of polynomial vector fields, we have a rough sufficient condition for $C^1$-extension of $g$ on $\overline{\mathcal{D}}$, which is stated in Lemma \ref{lem-extension}.
\end{rem}

\begin{lem}
\label{lem-extension}
Let $f=(f_1,\cdots, f_n)$ be an asymptotically quasi-homogeneous polynomial vector field of type $\alpha$ and order $k+1$ given in (\ref{poly-vf}).
Suppose that
\begin{equation}
\label{ass-extension}
c_{ji_1\cdots i_n} = 0\text{ for all }i_1,\cdots, i_n \text{ with }\sum_{l=1}^n \alpha_{i_l} \in \{k+\alpha_j -\gamma \mid \gamma = 1,\cdots, 2c-1\}
\end{equation}
holds for all $j =1, \cdots, n$. Then the desingularized vector field $g$ in (\ref{ODE-desing}) is $C^1$ on $\overline{\mathcal{D}}$.
\end{lem}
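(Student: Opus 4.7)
The plan is to reduce $C^1$-regularity of $g$ on $\overline{\mathcal D}$ to that of each $\tilde f_j$. By (\ref{ODE-desing}), $g_i$ is a polynomial in $x_1,\ldots,x_n$ and $\tilde f_1,\ldots,\tilde f_n$, so it suffices to show each $\tilde f_j$ extends $C^1$ to $\overline{\mathcal D}$. Substituting $y_l=\kappa^{\alpha_l}x_l$ into (\ref{poly-vf}) and combining with (\ref{f-tilde}), one gets
\begin{equation*}
\tilde f_j(x)=\sum_{i_1,\ldots,i_n} c_{j i_1\cdots i_n}\,\kappa^{-\gamma}\,x_1^{i_1}\cdots x_n^{i_n},\qquad \gamma := (k+\alpha_j)-\sum_{l=1}^n \alpha_l i_l \in\mathbb Z,
\end{equation*}
where $\gamma$ depends implicitly on $j$ and on the multi-index. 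Asymptotic quasi-homogeneity of $f$ of order $k+1$ forces $\gamma\ge 0$ whenever $c_{j i_1\cdots i_n}\neq 0$, since a monomial with $\gamma<0$ would contribute a diverging term $c R^{-\gamma}x_1^{i_1}\cdots x_n^{i_n}$ in the limit defining asymptotic quasi-homogeneity. The monomials with $\gamma=0$ assemble into the components of $f_{\alpha,k}$ and are polynomial in $x$, while all remaining nonzero terms satisfy $\gamma\ge 1$.

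Next I would rewrite $\kappa^{-\gamma}$ in the $x$-coordinate. Combining $\kappa(y)^{2c}=1+p(y)^{2c}$ with $p(y)^{2c}=\kappa(y)^{2c}p(x)^{2c}$ yields $\kappa(y)^{-2c}=1-\sum_{l\in I_\alpha}a_l x_l^{2\beta_l}$, hence
\begin{equation*}
\kappa(y)^{-\gamma} = \Bigl(1-\sum_{l\in I_\alpha}a_l x_l^{2\beta_l}\Bigr)^{\gamma/2c}.
\end{equation*}
This is smooth on $\mathcal D$ and extends continuously to $\overline{\mathcal D}$ for every $\gamma\ge 0$. Its partial derivative with respect to $x_l$ for $l\in I_\alpha$ involves the factor $(1-p(x)^{2c})^{(\gamma-2c)/2c}x_l^{2\beta_l-1}$, which extends continuously to the horizon $\mathcal E$ precisely when $\gamma\ge 2c$. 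Therefore monomials with $\gamma=0$ (polynomial in $x$) or $\gamma\ge 2c$ are $C^1$ up to $\overline{\mathcal D}$, whereas those with $\gamma\in\{1,\ldots,2c-1\}$ obstruct $C^1$-extension at $\mathcal E$.

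The hypothesis (\ref{ass-extension}) is precisely the statement that $c_{j i_1\cdots i_n}=0$ whenever $\gamma\in\{1,\ldots,2c-1\}$, so the obstructing monomials are absent from $\tilde f_j$. Consequently each surviving term of $\tilde f_j$ is $C^1$ on $\overline{\mathcal D}$, hence so is $\tilde f_j$, and therefore so is $g$. The only non-mechanical ingredient is the derivative computation that pinpoints the failure window $\{1,\ldots,2c-1\}$ for the exponent of $\kappa^{-1}$; once this is established the exclusion set in (\ref{ass-extension}) matches it verbatim and no further work is needed.
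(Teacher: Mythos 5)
Your proposal is correct and follows essentially the same route as the paper: reduce to $C^1$-regularity of each $\tilde f_j$, express each monomial's $\kappa$-factor as $\bigl(1-\sum_{l\in I_\alpha}a_l x_l^{2\beta_l}\bigr)^{\gamma/2c}$, and observe via the derivative computation that the only obstruction window is $\gamma\in\{1,\dots,2c-1\}$, which is exactly what (\ref{ass-extension}) excludes. Your added remarks that asymptotic quasi-homogeneity forces $\gamma\ge 0$ and that the $\gamma=0$ terms are genuinely polynomial make explicit points the paper leaves implicit, but the argument is the same.
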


\begin{proof}
Recall that $g_i = \tilde f_i - \left(\sum_{j=1}^n \beta_j a_j x_j^{2\beta_j - 1}\tilde f_j \right) \frac{x_i}{\beta_i}$.
The vector field $g_i$ is $C^1$ on $\overline{\mathcal{D}}$ if, at least, all $\tilde f_j$ are $C^1$ on $\overline{\mathcal{D}}$.
Observe that
\begin{align*}
\tilde f_j(x_1,\cdots, x_n) &= \kappa^{-(k+\alpha_j)} f_j(\alpha_1 x_1,\cdots, \alpha_n x_n)\\
	&= \kappa^{-(k+\alpha_j)} \sum_{i=0}^{k_j} \sum_{i_1,\cdots, i_n\geq 0, i_1+ \cdots +i_n=i} c_{ji_1\cdots i_n} \kappa^{\alpha_{i_1}+\cdots + \alpha_{i_n}} x_i^{i_1}\cdots x_n^{i_n}.
\end{align*}
From the asymptotical quasi-homogeneity of $f$, $\sum_{l=1}^n \alpha_{i_l} \leq k+\alpha_j$.
Therefore the $\kappa$-term in each summand has the form
\begin{equation*}
\kappa^{\sum_{l=1}^n \alpha_{i_l} - (k+\alpha_j)} = \left(1-\sum_{i=1}^n a_i x_i^{2\beta_i}\right)^{(\sum_{l=1}^n \alpha_{i_l} - (k+\alpha_j))/2c}
\end{equation*}
Arguments in Remark \ref{rem-extension} show that the above term is $C^1$ on $\overline{\mathcal{D}}$ if either $\sum_{l=1}^n \alpha_{i_l} = k+\alpha_j$ or $(k+\alpha_j) - \sum_{l=1}^n \alpha_{i_l} \geq 2c$ holds for all $j$.
Consequently, $\tilde f_j$ is $C^1$ on $\overline{\mathcal{D}}$ if (\ref{ass-extension}) holds for all $j$, and so is $g$, which completes the proof.
\end{proof}

As for directional compactifications, the desingularized vector field $g_d$ is expected to be smooth including $\mathcal{E} = \{s=0\}$ if functions $\{h_i\}$ are chosen to be smooth functions, since these compactifications do not include any radicals of $s$.

\subsection{Periodic blow-up}
\label{section-periodic}

Theorem \ref{thm-stationary-blowup} shows that trajectories on stable manifolds of hyperbolic equilibria at infinity correspond to blow-up solutions, which are solutions we usually refer to as blow-ups.
From the viewpoint of dynamical systems, one expects that stable manifolds of {\em hyperbolic invariant sets at infinity} also characterize blow-up solutions.
The following theorem shows that this expectation is true for periodic orbits. 

\begin{thm}[Periodic blow-up]
\label{thm-periodic-blowup}
Assume that the desingularized vector field $g$ in (\ref{ODE-desing}) associated with (\ref{ODE-original}) is $C^1$ on $\overline{\mathcal{D}}$.
Suppose that $g$ admits a periodic orbit $\gamma_\ast = \{x_{\gamma_\ast}(\tau)\} \subset \mathcal{E}$, with period $T_\ast > 0$, characterized by a fixed point of the Poincar\'{e} map $P:\Delta \cap \overline{\mathcal{D}}\to \Delta \cap \overline{\mathcal{D}}$.
Let $x_\ast\in \Delta\cap \gamma_\ast$; namely, $P(x_\ast)=x_\ast$.
We further assume that all eigenvalues of Jacobian matrix $JP(x_\ast)$ have moduli away from $1$
(namely, $\gamma_\ast$ is hyperbolic), at least one of which has the modulus less than $1$.
\par
Then the solution $y(t)$ of (\ref{ODE-original}) whose image $x = T_{qP}(y)$ is on $W^s(\gamma_\ast)$ for $g$ satisfies $t_{\max} < \infty$; namely, $y(t)$ is a blow-up solution.
Moreover,
\begin{equation*}
p(y(t)) \sim c(t_{\max} - t)^{-1/k}\quad \text{ as }\quad t\to t_{\max},
\end{equation*}
where $k+1$ is the order of asymptotically quasi-homogeneous vector field $f$.
Finally, if the $i$-th component $(x_\ast)_i$ of $x_\ast$ with $i\in I_\alpha$ is not zero, then we also have
\begin{equation*}
y_i(t) \sim c(t_{\max} - t)^{-\alpha_i /k}x_i(-c'\ln (t_{\max} - t))\quad \text{ as }\quad t\to t_{\max}
\end{equation*}
for some constants $c\in \mathbb{R}$ and $c' > 0$.
\end{thm}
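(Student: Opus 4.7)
The plan is to mirror the proof of Theorem \ref{thm-stationary-blowup}, replacing the Hartman--Grobman linearization at the equilibrium with Floquet / stable-manifold theory at the hyperbolic periodic orbit $\gamma_\ast$. Since $g$ is $C^1$ on $\overline{\mathcal{D}}$ and $\gamma_\ast$ is hyperbolic with at least one contracting Floquet multiplier, the $C^1$ stable manifold theorem for periodic orbits produces a locally invariant $C^1$ manifold $W^s_{\rm loc}(\gamma_\ast)$ tangent along $\gamma_\ast$ to the stable Floquet subbundle, and every trajectory $x(\tau)$ whose image lies in $W^s(\gamma_\ast)$ enters $W^s_{\rm loc}(\gamma_\ast)$ in finite $\tau$ and satisfies
\[
\dist(x(\tau),\gamma_\ast)\le C\,e^{-\mu\tau}\qquad (\tau\ge\tau_0),
\]
where $\mu=-T_\ast^{-1}\ln|\lambda_1|>0$ and $\lambda_1$ is the largest-modulus stable eigenvalue of $JP(x_\ast)$.

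Next, I would set $\psi(x):=1-p(x)^{2c}$. By Theorem \ref{thm-dyn-infty}-1 the horizon $\mathcal{E}=\{\psi=0\}$ is invariant, so $\psi$ vanishes identically on $\gamma_\ast$. One checks that $\nabla\psi$ is nonzero on $\mathcal{E}$ (its $i$-th component is $-2\beta_i a_i x_i^{2\beta_i-1}$ for $i\in I_\alpha$, and $p(x)=1$ forces at least one such $x_i\ne 0$), so a first-order Taylor expansion of $\psi$ around the compact set $\gamma_\ast$ together with positivity of $\psi$ on $\mathcal{D}$ gives
\[
0<\psi(x(\tau))\le C_1\,\dist(x(\tau),\gamma_\ast)\le C_2\,e^{-\mu\tau}.
\]
Recalling that $\kappa=\psi^{-1/2c}$ in $x$-coordinates, the time reparametrization $dt=\kappa^{-k}\,d\tau=\psi^{k/2c}\,d\tau$ produces a convergent integral
\[
t_{\max}-t=\int_{\tau}^{\infty}\psi(x(\eta))^{k/2c}\,d\eta\sim c_3\,e^{-(k\mu/2c)\tau}\qquad (\tau\to\infty),
\]
which proves $t_{\max}<\infty$ (so $y$ blows up) and yields the implicit relation $\tau\sim -c'\ln(t_{\max}-t)$ with $c'=2c/(k\mu)$. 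Plugging back gives $\psi(x(\tau))\sim (t_{\max}-t)^{2c/k}$, whence
\[
p(y(t))=\kappa(y(t))\,p(x(\tau))=\frac{p(x(\tau))}{\psi(x(\tau))^{1/2c}}\sim c\,(t_{\max}-t)^{-1/k}.
\]

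For the componentwise asymptotics, $y_i(t)=\kappa^{\alpha_i}x_i(\tau)\sim p(y(t))^{\alpha_i}\,x_i(\tau)$; unlike the equilibrium case, $x(\tau)$ does not converge---it winds around $\gamma_\ast$---so the coordinate $x_i(\tau)$ cannot be replaced by a constant. Substituting $\tau\sim -c'\ln(t_{\max}-t)$ yields the claimed
\[
y_i(t)\sim c\,(t_{\max}-t)^{-\alpha_i/k}\,x_i\bigl(-c'\ln(t_{\max}-t)\bigr),
\]
where the hypothesis $(x_\ast)_i\ne 0$ ensures the right-hand side is a non-degenerate asymptotic. The step I expect to be the main obstacle is justifying that the decay rate $\mu$ of $\psi$ along $W^s(\gamma_\ast)$ really is the transverse-to-$\mathcal{E}$ Floquet exponent rather than a possibly faster tangential one: because $\mathcal{E}$ is invariant, the monodromy of the variational equation along $\gamma_\ast$ splits into $\mathcal{E}$-tangential and $\mathcal{E}$-transverse blocks, and the transverse block must be contracting in order for $W^s(\gamma_\ast)\cap\mathcal{D}$ to be nonempty---which it is, by hypothesis. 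This splitting, together with uniform bounds on the second derivatives of $\psi$ on a compact neighborhood of $\gamma_\ast$, supplies the remainder estimate needed to upgrade $\dist(x(\tau),\gamma_\ast)\lesssim e^{-\mu\tau}$ to the two-sided estimate $\psi(x(\tau))\sim c_2\,e^{-\mu\tau}$ used above.
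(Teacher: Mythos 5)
Your proposal is correct and follows essentially the same route as the paper's proof: exponential convergence to the hyperbolic periodic orbit (you via the stable manifold theorem, the paper via Hartman--Grobman conjugacy plus Floquet theory on the normal bundle), transfer of that decay to $1-p(x)^{2c}$, convergence of the reparametrization integral $\int \kappa^{-k}\,d\tau$, and inversion of $t_{\max}-t\sim e^{-(k\ell/2c)\tau}$ to obtain the stated rates. The two-sided estimate on $1-p^{2c}(x(\tau))$ that you flag as the main obstacle is indeed the delicate point; the paper asserts it via a $(1+o(1))$ expansion around $\gamma_\ast$ without further comment, and your suggested resolution through the invariance of $\mathcal{E}$ and the triangular splitting of the monodromy (equivalently, the scalar linear equation for $1-p^{2c}$ in Theorem \ref{thm-dyn-infty}-3) is the right way to make that step rigorous.
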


\begin{proof}
First note that, by Hartman-Grobman's theorem for hyperbolic periodic orbits (e.g., \cite{I1970, PS1970, Rob}), the (general $C^1$) flow $\varphi^\tau$ near a hyperbolic periodic orbit $\gamma$ is topologically conjugate\footnote{
For general periodic orbits $\gamma$, the Hartman-Grobman's theorem gives the topological equivalence between flows, namely, transformations between two flows may permit {\em re-parameterizations of time variable}.
The re-parameterizations may change the blow-up time and rate of solutions.
} to the following linear bundle (skew-product) flow on the normal bundle $\mathcal{N}_\gamma$ in a neighborhood of $\gamma \times \{0\}$:
\begin{equation*}
\Psi^\tau(q,v) = (\varphi^\tau (q), J\varphi^\tau(q)v),\quad q\in \gamma_\ast,\ v\in N_q^{u}(\gamma_\ast)\oplus N_q^{s}(\gamma_\ast),
\end{equation*}
where $\mathbb{R}^n = N_q^{u}(\gamma_\ast)\oplus T_q(\gamma_\ast)\oplus N_q^{s}(\gamma_\ast)$ is the $J\varphi^\tau(q(\tau))$-invariant splitting continuously depending on $q\in \gamma_\ast$.
Consider the flow $\varphi^\tau$ of $g$ in (\ref{ODE-desing}) around $\gamma_\ast$ and associated $\Psi^\tau$.
The stable manifold of $\gamma_\ast \times \{0\}$ for $\Psi^\tau$ is characterized by 
\begin{equation*}
\{(q, v) \mid q\in \gamma_\ast, v = 0\oplus v_s \in N_q^{u}(\gamma_\ast)\oplus N_q^{s}(\gamma_\ast)\}.
\end{equation*}
Let $\pi_{\gamma_\ast} : \mathcal{N}_{\gamma_\ast} \to \gamma_\ast$ be the natural projection $(q,v)\mapsto q$.
Then, for any solution $z(\tau)\in T_{\gamma_\ast} W^s(\gamma_\ast)$ of $\Psi^\tau$, there is a point $q_z\in \gamma_\ast$ such that $\pi_{\gamma_\ast}(z(0)) = q_z$.
Let $\pi_{\mathcal{N}} = id-\pi_{\gamma_\ast}$. 
Note that the flow $\Psi^\tau$ is the solution of the system
\begin{equation*}
\begin{cases}
\displaystyle{\frac{dq}{d\tau} = f(q)}, & \\
\displaystyle{\frac{dv}{d\tau} = (\pi_\mathcal{N}\circ J\varphi^\tau(q(\tau)))v}, & 
\end{cases}
\quad q(\tau)\in \gamma_\ast,\quad v(0) = v_0 = 0\oplus v_{0,s} \in N_{q(0)}^{u}(\gamma_\ast)\oplus N_{q(0)}^{s}(\gamma_\ast).
\end{equation*}
Note that the splitting $\bigcup_{q\in \gamma_\ast} (N_q^{u}(\gamma_\ast)\oplus N_q^{s}(\gamma_\ast))$ is $J\varphi^\tau(q(\tau))$-invariant for all $\tau$
and that the coefficient matrix $J\varphi^\tau(q(\tau))$ is $T_\ast$-periodic.
Thus the Floquet theory indicates that there is a $T_\ast$-periodic nonsingular matrix $S(\tau)$ and a matrix $R$ such that
\begin{equation*}
v(\tau) = S(\tau) e^{\tau R}v_0,\quad {\rm Spec}(e^{T_\ast R}) = \{\lambda_1,\cdots, \lambda_{n-1}\}.
\end{equation*}
Under the change of coordinates, we may assume to express
\begin{equation}
\label{sol-normal-diag}
u(\tau) = Q(\tau) e^{\tau \Lambda}u_0,\quad e^{T_\ast \Lambda} = {\rm diag}(J(\lambda_1;m_1),\cdots, J(\lambda_k;m_k)),\footnote{
This expression is realized by using a nonsingular matrix $C_R$ obtaining the Jordan normal form of $R$ and setting $u(\tau) = C_Rv(\tau)$ as well as $u_0 = C_Rv_0$, $Q(\tau) = C_RS(\tau)$.
By our assumption of $v_0$, only eigenvalues $\lambda_i$ satisfying $|\lambda_i| \leq \mu_\ast < 1$ involves the time evolution of (\ref{sol-normal-diag}).
}
\end{equation}
with $\sum_{j=1}^k m_j = n-1$.
Thus, letting $\mu_\ast$ a positive number satisfying $\mu_\ast = \max_{i=1,\cdots,n \text{ with }|\lambda_i| < 1}|\lambda_i|$, we have
\begin{equation*}
|u(\tau)| = C_1\tau^{m_\ast - 1} e^{\tau l} (1 + o(1)),
\end{equation*}
where $l\leq \ln \mu_\ast < 0$ and $m_\ast$ denotes the maximal dimension of Jordan block matrix of $\lambda_i$'s attaining $\mu_\ast = |\lambda_i|$.
Let $z_{\gamma_\ast}(\tau)$ be the solution with $\pi_{\gamma_\ast}z_{\gamma_\ast}(\tau) = q_z(\tau)\in \gamma_\ast$ and $\pi_{\mathcal{N}}z_{\gamma_\ast}(\tau) \equiv 0$ for $\tau \geq 0$.
Then we have
\begin{equation*}
|z(\tau) - z_{\gamma_\ast}(\tau)| = C_2\tau^{m_\ast - 1} e^{\tau l} (1 + o(1)).
\end{equation*}

Consequently, via the conjugacy $h = id + v$ with bounded continuous function $v$, we have
\begin{equation*}
|x(\tau) - x_{\gamma_\ast}(\tau)| = | h(z(\tau)- z_{\gamma_\ast}(\tau))| = C_3 \tau^{m_\ast - 1} e^{-m\tau}(1+o(1))\quad \text{ as }\tau \to \infty
\end{equation*}
with $-m \leq l$ and some constant $C_3$.
\par
Turn to the quantity $1-p^{2c}(x(\tau))$.
Note that $1-p^{2c}(x) = |1-p^{2c}(x)|$ for any $x\in \overline{\mathcal{D}}$.
Near the periodic orbit $\gamma_\ast = \{x_\gamma(\tau)\}$, we have
\begin{align}
\notag
|1-p^{2c}(x(\tau))| &= \left| 1- \sum_{i\in I_\alpha} a_i (x_i(\tau)- x_{\gamma,i}(\tau) + x_{\gamma,i}(\tau))^{2\beta_i} \right|\\
\notag
	&= \left| 1-\sum_{i\in I_\alpha} \sum_{k_i = 1}^{2\beta_i} a_i {}_{2\beta_i}C_{k_i} x_{\gamma,i}(\tau)^{2\beta_i - k_i}(x_i(\tau)- x_{\gamma,i}(\tau))^{k_i} \right| \\
\label{asym-radius-per}
	&= C_4 \tau^{m_\ast - 1} e^{-\ell \tau}(1+o(1))
\end{align}
for some $-\ell \leq -m < 0$ and a constant $C_4$, where the terms $1$ and $\sum_{i\in I_\alpha} a_i x_{\gamma,i}(\tau)^{2\beta_i}$ are cancelled out for all $\tau$.
Recall that we have for a certain initial point $t_0$,
\begin{align*}
t-t_0 &= \int_0^\tau \frac{d\eta}{\kappa(y(\eta))^{k}}\\
	&= \int_0^\infty \frac{d\eta}{\kappa(y(\eta))^{k}} - \int_\tau^\infty \frac{d\eta}{\kappa(y(\eta))^{k}}
\end{align*}
and the integrals converge due to (\ref{asym-radius-per}).
In particular, $t_{\max} < \infty$ holds and the solution is a blow-up solution.
Thus
\begin{equation*}
t_{\max} - t = \int_\tau^\infty \frac{d\eta}{\kappa(y(\eta))^{k}} = C_5 \tau^{k(m_\ast - 1)/2c} e^{- (k\ell / 2c) \tau}(1+o(1))\quad \text{ as } \tau \to \infty.
\end{equation*}
Since $dt/d\tau > 0$, this relation is then solvable for $\tau$ and, together with (\ref{asym-radius}), it yields that
\begin{equation*}
1-p^{2c}(x(\tau)) \sim C_6(t_{\max}-t)^{2c/k}
\end{equation*}
and
\begin{equation}
\label{rate-infty}
p(y(t)) = p(x(\tau))\kappa(x(\tau)) = \frac{p(x(\tau))}{(1-p(x(\tau))^{2c})^{1/2c}} \sim c(t_{\max} -t)^{-1/k}
\end{equation}
as $t\to t_{\max}$, where $C_5, C_6$ and $c$ are certain constants.
\par
It immediately follows from the above asymptotics that, for $y_i$ tending to $x_\ast$ with $(x_\ast)_i \not = 0$,
\begin{equation*}
y_i(t) = \kappa(y(t))^{\alpha_i} x_i(\tau) \sim p(y(t))^{\alpha_i} x_i(\tau)\quad \text{ as }t\to t_{\max} .
\end{equation*}
Now the time-scale desingularization $d\tau = \kappa(y(t))^k dt$ indicates
\begin{equation*}
\tau = \tau_0 + \int_{t_0}^t \kappa(y(\tilde t))^k d\tilde t \sim \tau_0 + c\int_{t_0}^t (t_{\max} -\tilde t)^{-1} d\tilde t\quad \text{ as }t,t_0 \to t_{\max}
\end{equation*}
by (A1) in Proposition \ref{prop-adm} and (\ref{rate-infty}).
The last integral is $-c\ln ((t_{\max}-t)/(t_{\max}-t_0)) = -c'\ln (t_{\max}-t)$.
Thus we obtain
\begin{equation*}
y_i(t) \sim p(y(t))^{\alpha_i} x_i(\tau)\sim c(t_{\max} -t)^{-\alpha_i/k} x_i(-c'\ln (t_{\max}-t))\quad \text{ as }t\to t_{\max}
\end{equation*}
and complete the proof.
\end{proof}

\begin{rem}
By the In-Phase Property of hyperbolic periodic orbits (e.g., \cite{Rob}), it further follows that the blow-up solution $y(t) = T_{qP}^{-1}(x(t))$ possesses the following property: there is a point $z_0\in \gamma$ wth the solution $z(\tau)$ of (\ref{ODE-desing}) such that $d(x(\tau), z(\tau)) \to 0$ as $\tau \to \infty$, in which sense periodic blow-up solution $y(t)$ behaves in-phase.
\end{rem}

In addition to the convergence of norms, the {\em in-phase property} of invariant sets at infinity is required for precise description of blow-up behavior when invariant sets at infinity themselves have nontrivial behavior.
We leave general cases such as non-hyperbolic periodic trajectories or general invariant sets at infinity to the future works, since they are beyond our current aims.

\section{Demonstration 1}
\label{section-Lienard}
In the rest of successive sections, we demonstrate several blow-up solutions as applications of our arguments.
First we go back to the polynomial Li\'{e}nard equation (\ref{Lienard}).
According to e.g. \cite{DH1999}, periodic orbits at infinity can be seen in polynomial Li\'{e}nard equation 
\begin{equation}
\label{Lienard-again}
\begin{cases}
\dot x = y, & \\
\dot y = -(\epsilon x^m + \sum_{k=0}^{m-1}a_k x^k) - y(x^n + \sum_{k=0}^{n-1}b_k x^k) & 
\end{cases}
\end{equation}
with appropriate degree\footnote{
Some of periodic orbits at infinity are shown to be hyperbolic.
However, there are no arguments from the viewpoint of blow-up solutions.
}, 
where $\epsilon = \pm 1$ if $m\not = 2n+1$, and $\epsilon \in \mathbb{R}\setminus \{0\}$ if $m=2n+1$.
As an example, consider (\ref{Lienard-again}) with the type $(2n+1, n)$.
The aim of this section is to briefly review the preceding results in a simple case as a nontrivial example generating periodic blow-up solutions. 
\par
First we immediately know the following property.
\begin{lem}
The system (\ref{Lienard-again}) with $m=2n+1$ is asymptotically quasi-homogeneous with type $(1,n+1)$ and order $n+1$.
\end{lem}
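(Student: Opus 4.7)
The plan is to verify the definition of asymptotic quasi-homogeneity directly by identifying the principal (quasi-homogeneous) part $f_{\alpha,k}=((f_{\alpha,k})_1,(f_{\alpha,k})_2)$ and showing the remainder vanishes at the required rate. I take $\alpha=(1,n+1)$ and $k=n$, so $k+\alpha_1=n+1$ and $k+\alpha_2=2n+1$.

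First I would write the candidate principal part as
\begin{equation*}
(f_{\alpha,k})_1(x,y)=y,\qquad (f_{\alpha,k})_2(x,y)=-\epsilon x^{2n+1}-y\,x^{n},
\end{equation*}
and check that it is quasi-homogeneous of type $(1,n+1)$ and order $n+1$: substituting $(x,y)\mapsto(R x,R^{n+1}y)$ in $(f_{\alpha,k})_1$ gives $R^{n+1}y=R^{k+\alpha_1}(f_{\alpha,k})_1$, while in $(f_{\alpha,k})_2$ one has $-\epsilon R^{2n+1}x^{2n+1}-R^{n+1}y\cdot R^{n}x^{n}=R^{2n+1}(f_{\alpha,k})_2=R^{k+\alpha_2}(f_{\alpha,k})_2$. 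So each component is homogeneous of the required individual order.

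Next I would evaluate $f_j(R^{\alpha_1}x,R^{\alpha_2}y)-R^{k+\alpha_j}(f_{\alpha,k})_j(x,y)$ for $j=1,2$. For $j=1$ the difference is identically zero, so the limit is trivial. For $j=2$ the difference equals
\begin{equation*}
-\sum_{k=0}^{2n}a_k R^{k}x^{k}\;-\;\sum_{k=0}^{n-1}b_k R^{n+1+k}\,y\,x^{k},
\end{equation*}
and the largest exponents of $R$ appearing are $2n$ (from the $a_{2n}$ term) and $n+1+(n-1)=2n$ (from the $b_{n-1}$ term), both strictly less than $2n+1=k+\alpha_2$. Dividing by $R^{k+\alpha_2}=R^{2n+1}$ produces a finite sum of terms of the form $R^{-\ell}\cdot(\text{polynomial in }x,y)$ with $\ell\ge 1$, and since the polynomial factors are bounded uniformly on the compact set $U_1=S^{1}$, the whole expression tends to $0$ uniformly on $U_1$ as $R\to\infty$.

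There is no real obstacle here; the only bookkeeping point is to be careful that the Liénard nonlinearity $-y(x^n+\sum b_k x^k)$ contributes terms of mixed $R$-exponent, and to verify that every such exponent is strictly below $2n+1$ so that the subtraction of $R^{2n+1}(f_{\alpha,k})_2$ really leaves only negative powers of $R$. Once this is observed, uniform convergence on $U_1$ is immediate and the asymptotic quasi-homogeneity of type $(1,n+1)$ and order $n+1$ follows.
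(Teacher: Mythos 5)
Your verification is correct and is exactly the computation the paper has in mind: the paper states this lemma without proof (``we immediately know''), and the intended justification is precisely the direct check that the principal part $(y,\,-\epsilon x^{2n+1}-yx^n)$ is quasi-homogeneous of type $(1,n+1)$ with component orders $n+1$ and $2n+1$, while the lower-order Li\'{e}nard terms contribute only powers $R^{\ell}$ with $\ell\le 2n<2n+1$, hence vanish uniformly on $U_1=S^1$ after division by $R^{k+\alpha_j}$. The only cosmetic issue is the clash between $k$ as the order parameter ($k=n$) and $k$ as the summation index inherited from the Li\'{e}nard polynomial, which is worth renaming for clarity but does not affect the argument.
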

The vector field (\ref{Lienard-again}) thus associates the vector field (\ref{ODE-desing}) via the quasi-Poincar\'{e} compactification of type $(1,n+1)$ and the time-variable desingularization $d\tau = \kappa^n dt$.
Note that the correspondence matches the transformation discussed in Section 2.2 in \cite{DH1999}.
\par
Arguments in \cite{DH1999} show that, if $m=2n+1$ and $n$ is even, the system (\ref{Lienard-again}) possesses limit cycles at infinity which are repelling.
By Theorem \ref{thm-periodic-blowup}, the limit cycle would induce blow-up solutions in backward-time flow.
\par
To confirm this observation, consider the following system
\begin{equation}
\label{Lienard-simple}
\begin{cases}
y_1' = y_2, & \\
y_2' = -y_1^{2n+1} - y_1^n y_2. & 
\end{cases}
\end{equation}
Here we use the $(1,n+1)$-polar coordinate $(r,\theta)$ given by
\begin{equation*}
y_1 = \frac{{\rm Cs}\theta}{r},\quad y_2 = \frac{{\rm Sn}\theta}{r^{n+1}}\quad\text{ with }\quad {\rm Cs}^{2n+2}\theta + (n+1){\rm Sn}^2 \theta = 1.
\end{equation*}
We only note that
\begin{equation}
\label{cs-sn-even}
\int_0^T {\rm Cs}^k \theta {\rm Sn}^2 \theta d\theta > 0\quad \text{ if $k$ is even}.
\end{equation}
In the coordinate $(r, \theta)$, we have
\begin{align*}
r' &= - r^{2n+3}(y_1^{2n+1}y_1' + y_2 y_2') = - r^{2n+3}\{y_1^{2n+1}y_2 + y_2 (-y_1^{2n+1} - y_1^n y_2)\}\\
	&= r^{2n+3} y_1^n y_2^2 = r^{-(n-1)}{\rm Cs}^n \theta {\rm Sn}^2 \theta,\\
\theta' &= -(n+1)r {\rm Sn}\theta y_1' + r^{n+1} {\rm Cs}\theta y_2' = -(n+1)r {\rm Sn}\theta y_2 + r^{n+1} {\rm Cs}\theta (-y_1^{2n+1} - y_1^n y_2)\\
	&= -(n+1)r^{-n} {\rm Sn}^2\theta + r^{n+1} {\rm Cs}\theta (- {\rm Cs}^{2n+1}\theta r^{-(2n+1)} - {\rm Cs}^{n}\theta {\rm Sn}\theta r^{-(2n+1)})\\
	&= -r^{-n} (1 + {\rm Cs}^{n+1}\theta  {\rm Sn}\theta).
\end{align*}
Using the time-variable desinglarization $d\tau = r^{-n} dt$, we have
\begin{equation}
\label{Lienard-simple-desing}
\frac{dr}{d\tau} = r{\rm Cs}^n \theta {\rm Sn}^2 \theta,\quad \frac{d\theta}{d\tau} = -(1 + {\rm Cs}^{n+1}\theta  {\rm Sn}\theta).
\end{equation}
Observe that $r = 0$ satisfies $dr/d\tau = 0$ for any $\theta$. 
Moreover, $|{\rm Cs}^{n+1}\theta {\rm Sn}\theta| \leq ({\rm Cs}^{2(n+1)}\theta + {\rm Sn}^2\theta)^{1/2} \leq ({\rm Cs}^{2(n+1)}\theta + (n+1){\rm Sn}^2\theta)^{1/2} = 1$ and, since ${\rm Cs}0 = 1$, then $d\theta / d\tau$ never vanishes.
As a consequence, we have that the horizon $\{r = 0\}$ is an invariant periodic orbit.
Dynamics near $\{r=0\}$ is thus reduced to the following regular system:
\begin{equation*}
\frac{dr}{d\theta} = -\frac{{\rm Cs}^n \theta {\rm Sn}^2 \theta}{1 + {\rm Cs}^{n+1}\theta  {\rm Sn}\theta}r.
\end{equation*}
Then the Poincar\'{e} map $P$ on a section $\{0\leq r \leq \epsilon, \theta = 0\}$ with small $\epsilon > 0$ is
\begin{equation*}
P(r_0) = e^{\alpha(T)}r_0,\quad \alpha(T) = -\int_0^T \frac{{\rm Cs}^n \theta {\rm Sn}^2 \theta}{1 + {\rm Cs}^{n+1}\theta  {\rm Sn}\theta} d\theta.
\end{equation*}
By the fact $0 < C_1 \leq 1 + {\rm Cs}^{n+1}\theta  {\rm Sn}\theta \leq C_2 < \infty$ for $\theta\in [0,T]$ and (\ref{cs-sn-even}), we know $\alpha(T) < 0$.
Therefore the limit cycle at infinity $\{r=0\}$ is hyperbolic and repelling\footnote
{
Although $e^{\alpha(T)} < 1$, the variable $\theta$ varies in negative direction and hence the stability is totally reverse against the apparent calculation result.
}.
Theorem \ref{thm-periodic-blowup} and the above calculations yield the following result.


\begin{thm}
Consider (\ref{Lienard-simple}) with the backward-time direction.
Assume that $n$ is even.
Then any divergent solution $y(t) = (y_1(t), y_2(t))$ is periodic blow-up solution with the blow-up rate
\begin{equation*}
\begin{cases}
y_1(t) \sim c_1(t_{\max} - t)^{-1/n}x_1(-c'\ln (t_{\max}-t)), & \\
y_2(t) \sim c_2(t_{\max} - t)^{-(n+1)/n}x_2(-c'\ln (t_{\max}-t)) & \\
\end{cases}
\quad \text{ as }\quad t\to t_{\max}.
\end{equation*}
The periodic blow-up behavior with $n=2$ is described in Figure \ref{fig-Lienard}.
\end{thm}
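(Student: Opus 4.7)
The strategy is to apply Theorem \ref{thm-periodic-blowup} to the backward-time version of (\ref{Lienard-simple}) through the $(1,n+1)$-quasi-polar coordinates already introduced above, for which the horizon $\{r=0\}$ is the candidate hyperbolic periodic orbit. Once the hypotheses of that theorem---hyperbolicity of $\gamma_\ast=\{r=0\}$ with at least one contracting Floquet multiplier---are verified, and once every divergent solution of the original backward-time system is shown to lie on the stable manifold of $\gamma_\ast$, the asymptotic formulas follow by plugging $k=n$, $\alpha_1=1$, $\alpha_2=n+1$ into the conclusion of Theorem \ref{thm-periodic-blowup}.

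First I would collect the ingredients: (\ref{Lienard-simple}) is asymptotically quasi-homogeneous of type $(1,n+1)$ and order $n+1$, and the desingularized system in quasi-polar coordinates is (\ref{Lienard-simple-desing}). The computation that $d\theta/d\tau=-(1+{\rm Cs}^{n+1}\theta\,{\rm Sn}\theta)$ never vanishes identifies $\{r=0\}$ as an invariant periodic orbit carrying no equilibria, and the reduced Poincar\'{e} map along $\{\theta=0\}$ gives $P(r_0)=e^{\alpha(T)}r_0$. For even $n$, (\ref{cs-sn-even}) yields $\alpha(T)<0$; the footnote's observation that $\theta$ traverses the circle in the negative direction then shows that $\gamma_\ast$ is \emph{repelling} in forward time, hence attracting under the time reversal considered in the theorem. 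This supplies the hyperbolic periodic orbit with contracting multiplier required by Theorem \ref{thm-periodic-blowup}.

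Next I would argue that every divergent solution $y(t)$ of the backward-time system corresponds to a trajectory $x(\tau)=T_{qP}(y(t))$ lying in $W^s(\gamma_\ast)$. Divergence means $p(y(t))\to\infty$, equivalently $x(\tau)$ accumulates on the horizon $\mathcal{E}$. Since in this two-dimensional setting $\mathcal{E}$ is itself the orbit $\gamma_\ast$, and since $\gamma_\ast$ is hyperbolic attracting in the reversed time, its stable manifold is an open tubular neighborhood of $\gamma_\ast$ in $\overline{\mathcal{D}}$; any trajectory whose forward $\tau$-trajectory approaches $\mathcal{E}$ must therefore eventually enter, and hence belong to, $W^s(\gamma_\ast)$. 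Theorem \ref{thm-periodic-blowup} then gives $t_{\max}<\infty$, the overall rate $p(y(t))\sim c(t_{\max}-t)^{-1/n}$, and the componentwise rates $y_i(t)\sim c_i(t_{\max}-t)^{-\alpha_i/n}x_i(-c'\ln(t_{\max}-t))$, which is exactly the claim. The main obstacle is not computational but concerns this last step: one must make sure that the desingularized vector field is genuinely $C^1$ on $\overline{\mathcal{D}}$ so that Hartman-Grobman for periodic orbits and hence the open-tube description of $W^s(\gamma_\ast)$ apply. Because the quasi-Poincar\'{e} chart introduces radicals, it is cleanest to verify smoothness in the directional/quasi-polar chart---where no radicals appear---and transfer the conclusion back via Theorem \ref{thm-equivalence}.
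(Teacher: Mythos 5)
Your proposal matches the paper's own argument: the paper likewise passes to the $(1,n+1)$-quasi-polar (directional) chart, identifies the horizon $\{r=0\}$ as an invariant periodic orbit on which $d\theta/d\tau$ never vanishes, computes the Poincar\'{e} map $P(r_0)=e^{\alpha(T)}r_0$ with $\alpha(T)<0$ for even $n$ via (\ref{cs-sn-even}), concludes the cycle is hyperbolic and repelling (hence attracting in backward time), and then invokes Theorem \ref{thm-periodic-blowup} with $k=n$, $(\alpha_1,\alpha_2)=(1,n+1)$. You in fact spell out two points the paper leaves implicit (why every divergent solution lies on $W^s(\gamma_\ast)$, and that smoothness should be checked in the radical-free directional chart), though note that the final transfer should be done by running the proof of Theorem \ref{thm-periodic-blowup} directly in that chart rather than literally through the topological equivalence of Theorem \ref{thm-equivalence}, since topological equivalence alone does not preserve asymptotic rates.
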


This theorem can be generalized to (\ref{Lienard-again}) with $m=2n+1$ and even $n$ by the same arguments in \cite{DH1999}, but we omit the detail.


\begin{figure}[htbp]\em
\begin{minipage}{0.33\hsize}
\centering
\includegraphics[width=5cm]{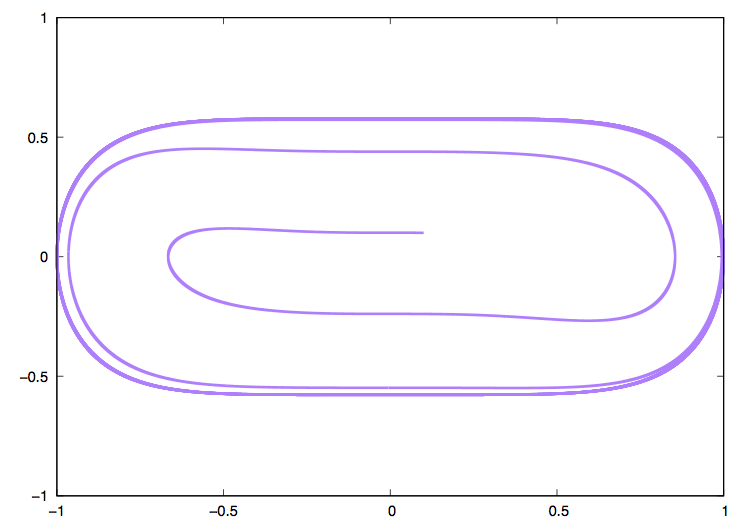}
(a)
\end{minipage}
\begin{minipage}{0.33\hsize}
\centering
\includegraphics[width=5cm]{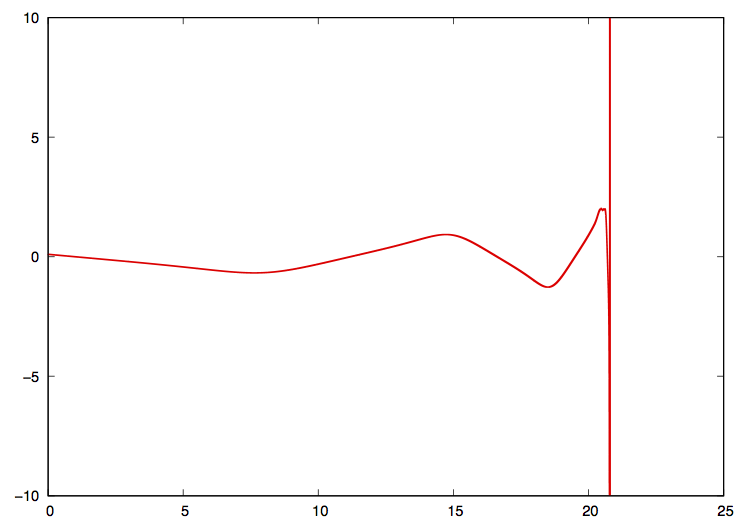}
(b)
\end{minipage}
\begin{minipage}{0.33\hsize}
\centering
\includegraphics[width=5cm]{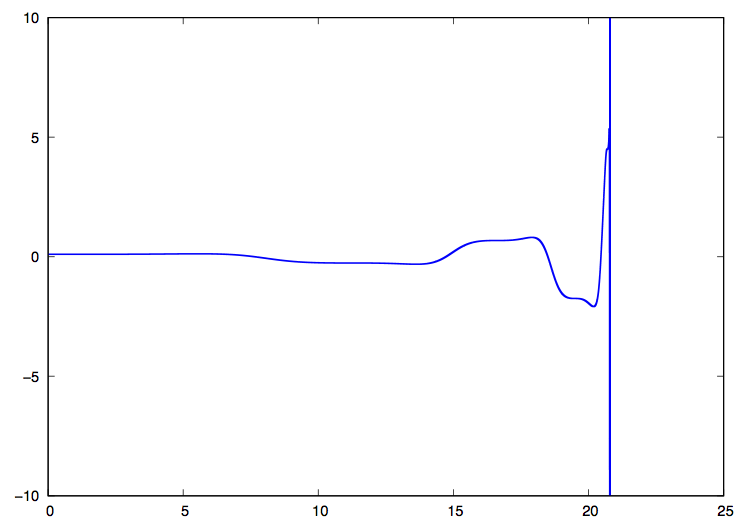}
(c)
\end{minipage}
\caption{Periodic blow-up : a solution of (\ref{Lienard-simple-desing}) with $n=2$ in backward-time direction}
\label{fig-Lienard}
(a) : a solution of (\ref{Lienard-simple-desing}) with $n=2$ in backward-time direction.
Coordinate is the orthogonal $(x_1,x_2)$-coordinate with $(a_1,a_2) = (1,3)$ instead of polar $(r,\theta)$-coordinate for simple numerical calculations.
The corresponding vector field (in backward-time direction) is
\begin{equation*}
\dot x_1 = -x_2 - x_1^{n+1} x_2^2,\quad
\dot x_2 = x_1^{2n+1} + x_1^n x_2 - (n+1) x_1^n x_2^3. 
\end{equation*}
The initial data is set as $(x_1, x_2) = (0.1,0.1)$. 
(b) : the $(t,y_1)$-plot of solution (a).
(c) : the $(t,y_2)$-plot of solution (a).
The solution blows up at $t_{\max}\sim 20.785$.
\end{figure}

\section{Demonstration 2}
\label{section-sing-shock}
Next we consider the following system of ODEs:
\begin{equation}
\label{KK-ex}
\begin{cases}
u' = u^2 - v, & \\
v' = \frac{1}{3}u^3. &
\end{cases}
\end{equation}
The system (\ref{KK-ex}) is well-known as the quasi-homogeneous part of traveling wave (viscous shock) equation derived from the {\em Keyfitz-Kranser model}  \cite{KK1990, KK1995}, which is the following initial value problem of the system of conversation laws:
\begin{equation}
\label{KK-PDE}
\begin{cases}
\displaystyle
{
\frac{\partial u}{\partial t} + \frac{\partial }{\partial x}(u^2 - v) = 0,
} & \\
\displaystyle
{
\frac{\partial v}{\partial t} + \frac{\partial }{\partial x}\left(\frac{1}{3}u^3 - u\right)=0,
} &
\end{cases}
\quad 
(u(x,0), v(x,0)) = 
\begin{cases}
(u_L, v_L) & x<0, \\
(u_R, v_R) & x>0.
\end{cases}
\end{equation}

The aim of this section is to discuss the application of quasi-Poincar\'{e} compactifications to (\ref{KK-ex}) for observing blow-up solutions and related singularity aspect of solutions with the help of numerical simulations.
We see that quasi-Poincar\'{e} compactifications give us comprehensive observations of blow-up solutions including unstable stationary blow-up solutions.

\subsection{Compactification and desingularization}
Let $f(u,v) = (f_1(u,v), f_2(u,v))$ be $f_1(u,v) = u^2 - v$ and $f_2(u,v) = \frac{1}{3}u^3$. 
Then we immediately have the following observation.
\begin{lem}
\label{lem-KK-type}
The vector field $f$ is (asymptotically) quasi-homogeneous of type $(1,2)$ and order $2$.
\end{lem}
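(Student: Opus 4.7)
The plan is to verify the defining identity of (asymptotic) quasi-homogeneity of type $(1,2)$ and order $k+1=2$ directly from Definition 2.3 (so $k=1$, $\alpha_1=1$, $\alpha_2=2$). Since $f$ is polynomial and in fact strictly quasi-homogeneous, the asymptotic condition will hold trivially, with the limit being exactly $0$ for every $R$, not merely as $R\to\infty$; in particular we may take $(f_{\alpha,k})_j \equiv f_j$.

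The key computations I would carry out are the two component checks. For $j=1$, the relevant exponent is $k+\alpha_1 = 2$, and
\begin{equation*}
f_1(R^{\alpha_1}u, R^{\alpha_2}v) = (Ru)^2 - R^2 v = R^2(u^2 - v) = R^{k+\alpha_1} f_1(u,v),
\end{equation*}
so $R^{-(k+\alpha_1)}\{f_1(R u, R^2 v) - R^{k+\alpha_1} f_1(u,v)\} \equiv 0$. For $j=2$, the exponent is $k+\alpha_2 = 3$, and
\begin{equation*}
f_2(R^{\alpha_1}u, R^{\alpha_2}v) = \tfrac{1}{3}(Ru)^3 = \tfrac{R^3}{3} u^3 = R^{k+\alpha_2} f_2(u,v),
\end{equation*}
so again $R^{-(k+\alpha_2)}\{f_2(R u, R^2 v) - R^{k+\alpha_2} f_2(u,v)\} \equiv 0$. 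Each component $f_j$ is thus a quasi-homogeneous function of type $(1,2)$ and order $k+\alpha_j$, which is exactly the definition of $f$ being a quasi-homogeneous (hence asymptotically quasi-homogeneous) vector field of type $(1,2)$ and order $k+1 = 2$.

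There is essentially no obstacle: the statement is a direct definitional check for a polynomial system, and no limit or error-term analysis is actually needed because the equality is exact for all $R$. The only thing to be careful about is bookkeeping, namely matching the indices $(\alpha_1,\alpha_2,k)=(1,2,1)$ so that the orders $k+\alpha_j\in\{2,3\}$ correspond to the homogeneous degrees of $f_1$ and $f_2$ under the quasi-homogeneous scaling $(u,v)\mapsto (Ru,R^2 v)$.
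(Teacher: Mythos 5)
Your verification is correct and is exactly the direct definitional check the paper has in mind (the paper states the lemma as an immediate observation and omits the computation). The index bookkeeping $(\alpha_1,\alpha_2,k)=(1,2,1)$ and the exact identities $f_j(Ru,R^2v)=R^{k+\alpha_j}f_j(u,v)$ for $j=1,2$ are right, and the remark that exact quasi-homogeneity makes the asymptotic condition hold trivially is the correct reading of the definitions.
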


We apply the quasi-Poincar\'{e} compactification of type $(1,2)$ with $a_1 = 1, a_2 = 2$ in Definition \ref{dfn-qP} as follows:
\begin{equation}
\label{coord-KK-ortho}
x_1 = \frac{u}{\kappa},\quad x_2 = \frac{v}{\kappa^2},\quad \kappa = \kappa(u,v) = (1+u^4 + 2v^2)^{1/4},
\end{equation}
as well as its quasi-polar coordinate representations
\begin{equation}
\label{coord-KK-polar}
u = \frac{{\rm Cs}\theta}{r},\quad v = \frac{{\rm Sn}\theta}{r^2},\quad r = \kappa^{-1}.
\end{equation}
Note that this quasi-Poincar\'{e} compactification is exactly same as the scaling of solutions near infinity in \cite{SSS1993}.
\par
The desingularized vector field of order $k +1 = 2$ for (\ref{KK-ex}) with orthogonal coordinate (\ref{coord-KK-ortho}) is
\begin{equation}
\label{KK-ex-desing}
\begin{cases}
\displaystyle
{
\dot x_1 = (x_1^2 - x_2) - x_1\left\{ x_1^3 (x_1^2 - x_2) + \frac{1}{3}x_1^3x_2 \right\}, 
}& \\
\displaystyle
{
\dot x_2 = \frac{1}{3}x_1^3 - 2x_2\left\{ x_1^3 (x_1^2 - x_2) + \frac{1}{3}x_1^3x_2 \right\},
}
& \\
\end{cases} \dot{} = \frac{d}{d\tau}.
\end{equation}
Similarly, the desingularized vector field of order $k +1 = 2$ for (\ref{KK-ex}) with quasi-polar coordinate (\ref{coord-KK-polar}) is
\begin{equation}
\label{KK-ex-desing-polar}
\begin{cases}
\displaystyle
{
\dot r = -r\left( {\rm Cs}^5 \theta - \frac{2}{3}{\rm Cs}^3\theta {\rm Sn}\theta \right), 
}& \\
\displaystyle
{
\dot \theta = -2{\rm Cs}^2\theta {\rm Sn}\theta + 2{\rm Sn}^2 \theta + \frac{1}{3} {\rm Cs}^4 \theta,
}
& \\
\end{cases} \dot{} = \frac{d}{d\tau_d}.
\end{equation}

We consider all vector fields depending on situations.
More precisely, we consider (\ref{KK-ex-desing}) for calculations of equilibria at infinity and numerical simulations of global trajectories, while we consider (\ref{KK-ex-desing-polar}) for calculating eigenvalues of Jacobian matrix at these points.

\subsection{Dynamics at infinity}
Here we consider equilibria of (\ref{KK-ex-desing}) on the horizon $\mathcal{E} = \left\{p(x)= (1+x_1^4 + 2x_2^2)^{1/4}=1\right\}$.
They should satisfy
\begin{equation}
\label{KK-ex-equilibrium}
\begin{cases}
\displaystyle
{
(x_1^2 - x_2) - \frac{x_1}{2}\left\{ 2x_1^3 (x_1^2 - x_2) + \frac{2}{3}x_1^3x_2 \right\} = 0, 
}& \\
\displaystyle
{
\frac{1}{3}x_1^3 - x_2\left\{ 2x_1^3 (x_1^2 - x_2) + \frac{2}{3}x_1^3x_2 \right\} = 0, 
}
& \\
x_1^4 + 2x_2^2 = 1\quad (\Leftrightarrow\ p(x) = 1). &
\end{cases}
\end{equation}
We immediately know that, if $(x_1, x_2)$ is an equilibrium at infinity, then $x_1, x_2\not = 0, \pm 1$.
\par
We may thus divide the second equation by $x_1^3$ to obtain $1 = 6x_2x_1^2 - 4x_2^2$.
We thus have
\begin{equation}
\label{KK-cp}
x_1^2  = \frac{1 + 4x_2^2}{6x_2}.
\end{equation}
Since $x_1^2 > 0$, $x_2$ has to be positive.
Moreover, since $x_1^2 < 1$, then $x_2$ also has to satisfy $1 + 4x_2^2 < 6x_2$.
Thus we have $(3-\sqrt{5})/4 < x_2 < 1/\sqrt{2}$.
\par
The first equation of (\ref{KK-ex-equilibrium}) is 
\begin{align*}
(x_1^2 - x_2) - &\frac{x_1}{2}\left\{ 2x_1^3 (x_1^2 - x_2) + \frac{2}{3}x_1^3x_2 \right\} = 0 \Leftrightarrow (x_1^2 - x_2) - x_1^4 (x_1^2 - x_2) - \frac{1}{3}x_1^4x_2 =0\\
	&\Leftrightarrow 2x_2^2(x_1^2 - x_2) - \frac{1}{3}(1-2x_2^2) x_2 =0\quad (\text{by }x_1^4 + 2x_2^2 = 1).
\end{align*}
We may divide the right-most side by $x_2$ to obtain $6x_1^2 x_2 - 1 - 4x_2^2 = 0$, which is exactly same as (\ref{KK-cp}).
We substitute (\ref{KK-cp}) into $p(x) = 1$ to obtain
\begin{align*}
\left(\frac{1 + 4x_2^2}{6x_2}\right)^2 + 2x_2^2 = 1 &\Leftrightarrow (1+4x_2^2)^2 = 36x_2^2(1-2x_2^2)\\
	&\Leftrightarrow 1 + 8\lambda + 16\lambda^2 = 36\lambda - 72\lambda^2 \quad (\text{setting }\lambda \equiv x_2^2)\\
	&\Leftrightarrow \lambda = \frac{7 \pm 3\sqrt{3}}{44} \quad \left(\text{which satisfy }\lambda= x_2^2 > \left(\frac{3-\sqrt{5}}{4}\right)^2\right).
\end{align*}
Thus we have
\begin{equation*}
x_2 = \sqrt{\frac{7+ 3\sqrt{3}}{44}} \approx 0.52648388611,\quad  \sqrt{\frac{7 - 3\sqrt{3}}{44}} \approx 0.20247601301\in \left(\frac{3-\sqrt{5}}{4}, \frac{1}{\sqrt{2}}\right),
\end{equation*}
as well as
\begin{equation*}
x_1 = \pm \sqrt[4]{\frac{15 - 3\sqrt{3}}{22}} \approx \pm 0.81704027943,\quad \pm \sqrt[4]{\frac{15 + 3\sqrt{3}}{22}} \approx \pm 0.97883950723.
\end{equation*}

As a consequence, we obtain the following result.
\begin{lem}
\label{lem-cp-infty-KK}
Equilibria at infinity of (\ref{KK-ex}) associated with quasi-Poincar\'{e} compactification with type $(1,2)$ and order $2$ are the following four points:
\begin{equation*}
p_1^\pm = \left(\pm \sqrt[4]{\frac{15 + 3\sqrt{3}}{22}}, \sqrt{\frac{7 - 3\sqrt{3}}{44}}\right),\quad p_2^\pm = \left(\pm \sqrt[4]{\frac{15 - 3\sqrt{3}}{22}}, \sqrt{\frac{7 + 3\sqrt{3}}{44}}\right).
\end{equation*}
\end{lem}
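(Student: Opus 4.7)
My plan is to characterize equilibria at infinity as simultaneous solutions of the desingularized vector field equations $\dot x_1 = 0$, $\dot x_2 = 0$ from (\ref{KK-ex-desing}), together with the horizon constraint $x_1^4 + 2x_2^2 = 1$, and then reduce the resulting algebraic system to a single quadratic equation. I will work with the orthogonal coordinates rather than the polar ones because the location of equilibria is easier to pin down there, and the quasi-polar form (\ref{KK-ex-desing-polar}) can be reserved for the eigenvalue calculations announced just below the lemma.

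First I will dispose of the degenerate possibilities. If $x_1 = 0$, the second equilibrium equation becomes $-2x_2 \cdot 0 = 0$ but the first reduces to $-x_2 = 0$, forcing $(x_1, x_2)=(0,0)$, which violates $p(x)=1$. If $x_2 = 0$, the second equation becomes $x_1^3/3 - 2 \cdot 0 \cdot (x_1^5) = 0$, forcing $x_1 = 0$, again inconsistent with the horizon. So any equilibrium on $\mathcal{E}$ must have $x_1, x_2 \neq 0$, and moreover $|x_1|, |x_2| < 1$ since $x_1^4 + 2x_2^2 = 1$ with $x_1, x_2$ both nonzero.

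Next I will extract the core algebraic relation. Dividing the second equilibrium equation by $x_1^3 \neq 0$ produces
\begin{equation*}
\frac{1}{3} - 2x_2(x_1^2 - x_2) - \frac{2}{3}x_2^2 = 0,
\end{equation*}
which rearranges to $6x_1^2 x_2 = 1 + 4x_2^2$. This immediately forces $x_2 > 0$, and combined with $x_1^2 < 1$ it requires $x_2 \in ((3-\sqrt{5})/4, 1/\sqrt{2})$. I then want to verify that the first equilibrium equation, after using the horizon identity $x_1^4 = 1 - 2x_2^2$, collapses to the same relation $6x_1^2 x_2 - 1 - 4x_2^2 = 0$ (after dividing through by $x_2 \neq 0$); this is the one step that is not completely mechanical, but it is a short calculation of the kind the paper already sketches.

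Finally I will substitute $x_1^2 = (1+4x_2^2)/(6x_2)$ into $x_1^4 + 2x_2^2 = 1$ and set $\lambda = x_2^2$ to obtain the quadratic
\begin{equation*}
(1+4\lambda)^2 = 36\lambda(1-2\lambda),
\end{equation*}
whose roots are $\lambda = (7 \pm 3\sqrt{3})/44$. I will check both lie in the admissible interval $\lambda \in ((3-\sqrt{5})^2/16,\, 1/2)$ so that neither root is spurious, take positive square roots for $x_2$, and then extract $x_1 = \pm [(15 \mp 3\sqrt{3})/22]^{1/4}$ from the relation for $x_1^2$. This yields exactly the four points $p_1^\pm, p_2^\pm$ claimed. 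The main potential obstacle is bookkeeping — making sure that the pairing of signs between $x_2$ and $x_1^2$ is correct and that no root of the quadratic gets discarded by accident — but there is no real analytic difficulty, just careful arithmetic.
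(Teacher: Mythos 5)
Your proposal is correct and follows essentially the same route as the paper: rule out $x_1=0$ or $x_2=0$, divide the second equilibrium equation by $x_1^3$ to get $6x_1^2x_2 = 1+4x_2^2$, check that the first equation reduces to the same relation via $x_1^4 = 1-2x_2^2$, and substitute into the horizon constraint to obtain the quadratic $(1+4\lambda)^2 = 36\lambda(1-2\lambda)$ in $\lambda = x_2^2$. The only difference is cosmetic — the paper carries out explicitly the reduction of the first equation that you defer as "a short calculation," and it phrases the admissibility check via the interval $(3-\sqrt{5})/4 < x_2 < 1/\sqrt{2}$ rather than in terms of $\lambda$.
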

Next we consider the dynamics of (\ref{KK-ex-desing}) on $\mathcal{E}$, which is the dynamics on a simple closed curve (namely, no self-crossing points) with four equilibria.
Note that $\mathcal{E}$ is an invariant submanifold of $\overline{\mathcal{D}}$ for (\ref{KK-ex-desing}) from Theorem \ref{thm-dyn-infty}-1.
Dynamics on $\mathcal{E}$ behaves monotonously off four equilibria stated in Lemma \ref{lem-cp-infty-KK}.
In particular, the vector field (\ref{KK-ex-desing}) at $(x_1, x_2) = (0,\pm 1)$ is
\begin{equation*}
\dot x_1\mid_{(x_1, x_2) = (0,\pm 1)} = \mp 1,\quad \dot x_2\mid_{(x_1, x_2) = (0,\pm 1)} = 0,
\end{equation*}
Similarly, we have
\begin{equation*}
\dot x_1\mid_{(x_1, x_2) = (\pm \sqrt[4]{1/2}, 1/2)} = \frac{1}{\sqrt{2}} - \frac{1}{3} > 0,\quad \dot x_2\mid_{(x_1, x_2) = (\pm \sqrt[4]{1/2}, 1/2)} = \frac{\pm 1}{\sqrt[4]{2}}\left(\frac{\sqrt{2}}{3} - \frac{1}{2}\right) < 0.
\end{equation*}

Consequently, we have the following result, which can be also obtained for quasi-polar coordinates.
\begin{prop}
There are heteroclinic orbits for (\ref{KK-ex-desing}) in $\mathcal{E}$ from (i) : $p_1^-$ to $p_1^+$, (ii) : $p_2^+$ to $p_1^+$, (iii) $p_2^+$ to $p_2^-$, and (iv) $p_1^-$ to $p_2^-$.
The closure of these orbits fulfills $\mathcal{E}$.
\end{prop}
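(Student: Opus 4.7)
The plan is to exploit the fact that $\mathcal{E}$ is an invariant one-dimensional simple closed curve carrying exactly four equilibria, so the restricted flow is forced to decompose into four monotone heteroclinic arcs; only the direction of motion on each arc then needs to be checked, and that is a matter of evaluating the vector field at a single convenient test point per arc.

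First I would set up the topological framework. By Theorem \ref{thm-dyn-infty}-1, the horizon $\mathcal{E}=\{x_1^4+2x_2^2=1\}$ is an invariant submanifold for (\ref{KK-ex-desing}), and as a smooth compact level curve in $\mathbb{R}^2$ it is diffeomorphic to $S^1$. Lemma \ref{lem-cp-infty-KK} lists four equilibria $p_1^{\pm},p_2^{\pm}\in\mathcal{E}$, and all four lie in $\{x_2>0\}$. Thus $\mathcal{E}$ splits into four open arcs whose interiors are equilibrium-free. Since $\mathcal{E}$ is invariant, the restriction of $g$ to each arc is a nowhere vanishing $C^1$ tangent vector field on an open interval; the induced flow is therefore strictly monotone with no interior recurrence, and every trajectory inside an arc must have its two endpoints as its $\alpha$- and $\omega$-limits. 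Each arc is thus a heteroclinic connection between its two endpoints.

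Next I would orient things and determine the direction of flow. Listing the four equilibria in counterclockwise order around $\mathcal{E}$ one finds the cyclic sequence $p_2^{-},p_1^{-},p_1^{+},p_2^{+}$, so the four arcs are: a long arc $A_{\mathrm{bot}}$ from $p_1^{-}$ to $p_1^{+}$ wrapping through the lower half-plane, and three short upper arcs between adjacent $p$'s. The sign computations already carried out in the text do essentially all the work: at $(0,1/\sqrt{2})$, which sits on the top arc between $p_2^{+}$ and $p_2^{-}$, one has $\dot x_1<0$, giving the direction $p_2^{+}\to p_2^{-}$; at $(\sqrt[4]{1/2},1/2)$, on the upper-right arc between $p_1^{+}$ and $p_2^{+}$, one has $\dot x_1>0,\,\dot x_2<0$, giving $p_2^{+}\to p_1^{+}$; at $(-\sqrt[4]{1/2},1/2)$, on the upper-left arc between $p_2^{-}$ and $p_1^{-}$, one has $\dot x_1>0,\,\dot x_2>0$, giving $p_1^{-}\to p_2^{-}$; and on $A_{\mathrm{bot}}$ one checks any test point such as $(1,0)$ or $(0,-1/\sqrt{2})$, where $\dot x_2>0$ or $\dot x_1>0$ respectively confirms the counterclockwise direction $p_1^{-}\to p_1^{+}$. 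These four directions are precisely (i)--(iv), and the union of the four closed arcs is $\mathcal{E}$, proving the last statement.

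The argument contains no genuine analytic obstacle: the monotone-flow-on-a-circle principle is immediate and the sign computations are algebraically routine. The only place one must be careful is bookkeeping — identifying unambiguously which arc contains which test point, and translating the sign of $(\dot x_1,\dot x_2)$ at that point into a direction of travel along $\mathcal{E}$ — and verifying that the chosen test points do land on the intended arcs (for instance, that $(0,\pm 1/\sqrt{2})$ and $(\pm\sqrt[4]{1/2},1/2)$ genuinely lie on $\mathcal{E}$ and in the correct arc). This can be handled by a single schematic picture of the quartic oval together with the coordinates of the four equilibria.
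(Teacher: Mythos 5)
Your argument is correct and is essentially the paper's own: invariance of the horizon together with the absence of equilibria on the four open arcs forces each arc to be a single monotone heteroclinic orbit, and the four directions are read off from the sign of the vector field at the test points $(0,\pm 1/\sqrt{2})$ and $(\pm\sqrt[4]{1/2},1/2)$. You are in fact slightly more careful than the text, which evaluates at $(0,\pm 1)$ --- points not on $\mathcal{E}$ --- although the sign conclusion is unaffected there since $\dot x_1=-x_2$ along the whole $x_2$-axis.
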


\subsection{Blow-up solutions}
We calculate the Jacobian matrix of (\ref{KK-ex-desing}) as well as (\ref{KK-ex-desing-polar}) at $p_i^\pm$.
Both vector fields (\ref{KK-ex-desing}) and (\ref{KK-ex-desing-polar}) are $C^1$ on $\partial \mathcal{D}$.
According to Theorem \ref{thm-stationary-blowup}, equilibria at infinity $p_i^\pm$ induce blow-up solutions if they are hyperbolic.
We immediately have
\begin{equation}
\label{Jacobi-KK-ex}
Jg(r,\theta)|_{r=0} \equiv \frac{\partial g(r,\theta)}{\partial (r,\theta)}\left|_{r=0} \right.= \begin{pmatrix}
-{\rm Cs}^5\theta + \frac{2}{3}{\rm Cs}^3\theta {\rm Sn}\theta & 0\\
s {\rm Cs}\theta {\rm Sn}\theta & 4{\rm Cs}\theta {\rm Sn}^2\theta -2{\rm Cs}^5\theta + \frac{8}{3}{\rm Cs}^3\theta {\rm Sn}\theta
\end{pmatrix}.
\end{equation}
Then the eigenvalues $\mu_r$ and $\mu_\theta$ are
\begin{equation*}
\mu_r(\theta) = -{\rm Cs}^5\theta + \frac{2}{3}{\rm Cs}^3\theta {\rm Sn}\theta,\quad \mu_\theta(\theta) = 4{\rm Cs}\theta {\rm Sn}^2\theta -2{\rm Cs}^5\theta + \frac{8}{3}{\rm Cs}^3\theta {\rm Sn}\theta,
\end{equation*}
which describes the stability in the $r$-direction and the $\theta$-direction, respectively.
\begin{itemize}
\item At $p_1^+$,
\begin{align*}
\mu_r &= -\left(\frac{15+3\sqrt{3}}{22}\right)^{3/4} \left( \sqrt{\frac{15+3\sqrt{3}}{22}} - \frac{2}{3}\sqrt{\frac{7-3\sqrt{3}}{44}} \right)\approx -0.7719863801113,\\
\mu_\theta &= 2\left(\frac{15+3\sqrt{3}}{22}\right)^{1/4} \left(\frac{-4-3\sqrt{3}}{11} + \frac{4}{3} \sqrt{\frac{(15+3\sqrt{3})(7-3\sqrt{3})}{22\cdot 44}}\right)  \approx -1.130266505985.
\end{align*}
%
\item At $p_1^-$, $\mu_r(p_1^-) = -\mu_r(p_1^+)$ and $\mu_\theta(p_1^-) = -\mu_\theta(p_1^+)$ by symmetry.
\item At $p_2^+$, 
\begin{align*}
\mu_r &= -\left(\frac{15-3\sqrt{3}}{22}\right)^{3/4} \left( \sqrt{\frac{15-3\sqrt{3}}{22}} - \frac{2}{3}\sqrt{\frac{7+3\sqrt{3}}{44}} \right)\approx -0.1726609270826,\\
\mu_\theta &= 2\left(\frac{15-3\sqrt{3}}{22}\right)^{1/4} \left(\frac{-4+3\sqrt{3}}{11} + \frac{4}{3} \sqrt{\frac{(15-3\sqrt{3})(7+3\sqrt{3})}{22\cdot 44}}\right)  \approx +0.9434368505431.
\end{align*}
%
\item At $p_2^-$, $\mu_r(p_2^-) = -\mu_r(p_2^+)$ and $\mu_\theta(p_2^-) = -\mu_\theta(p_2^+)$ by symmetry.
\end{itemize}
That is, $p_1^+$ is a sink, $p_1^-$ is a source, and both $p_2^\pm$ are saddles.
Hyperbolicity is independent of the choice of compactifications\footnote{
Indeed, numerical computations for quasi-Poincar\'{e} compactifications indicate that these equilibria as those for (\ref{KK-ex-desing}) are hyperbolic and the same stability information as those for (\ref{KK-ex-desing-polar}).
}. 
Theorem \ref{thm-stationary-blowup} implies that trajectories of (\ref{KK-ex}) whose images are asymptotic to these equilibria in appropriate time directions, which are shown in Figure \ref{fig-KK}, are blow-up solutions.

\begin{figure}[htbp]\em
\begin{minipage}{1\hsize}
\centering
\includegraphics[width=7cm]{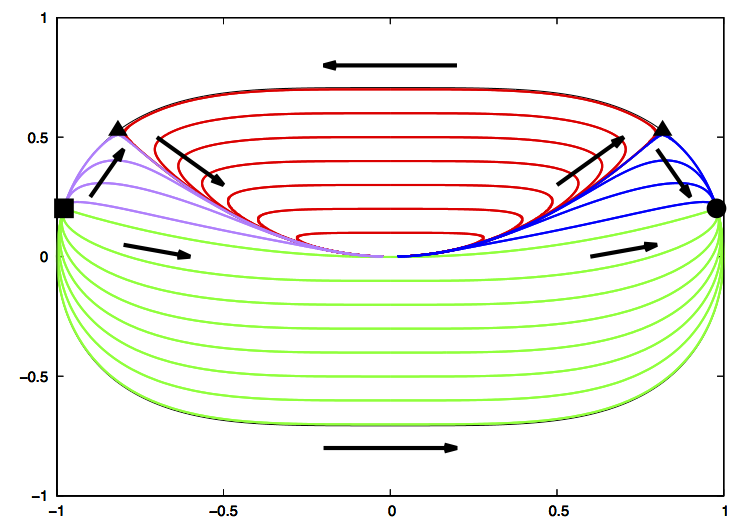}
\end{minipage}
\caption{Global trajectories of (\ref{KK-ex-desing}) on $\overline{\mathcal{D}}$}
\label{fig-KK}
(a) Red : global trajectories through $x_1= 0, x_2 > 0$.
(b) Green : global trajectories through $x_1= 0, x_2 < 0$.
(c) Blue : global trajectories connecting $p_1^-$ and the origin.
(d) Purple : global trajectories connecting the origin and $p_1^+$.
\par
All trajectories are computed by using standard Runge-Kutta explicit scheme, not advanced scheme for hyperbolic systems such as Godunov scheme for systems of conservation laws.
Trajectories (b), (c) and (d) are actually correspond to blow-up solutions, because all equilibria at infinity are hyperbolic.
\end{figure}

\section{Demonstration 3}
\label{section-two-fluid}
In this section, consider the Riemann problem of the following system of conservation laws derived from (simplified) two-phase, one-dimensional imcompressible flow \cite{KSS2003}:
\begin{equation}
\label{two-fluid-PDE}
\beta_t + (vB_1(\beta))_x = 0,\quad
v_t + (v^2 B_2(\beta))_x = 0
\end{equation}
with
\begin{equation}
\label{data-two-fluid-PDE}
(\beta(x,0), v(x,0)) = 
\begin{cases}
U_L \equiv (\beta_L, v_L) &\text{$x<0$},\\
U_R \equiv (\beta_R, v_R) &\text{$x>0$},
\end{cases}
\end{equation}
where
\begin{equation*}
B_1(\beta) = \frac{(\beta-\rho_1)(\beta-\rho_2)}{\beta},\quad B_2(\beta) = \frac{\beta^2- \rho_1\rho_2}{2\beta^2}
\end{equation*}
and $\rho_2 > \rho_1$ are positive constants.
Observe that $B_1(\beta) < 0$ for $\beta\in (\rho_1, \rho_2)$ and $B_1(\beta) > 0$ for $0< \beta < \rho_1, \beta > \rho_2$. 
Note that eigenvalues of the Jacobian matrix of $F(U) = (vB_1(\beta), v^2B_2(\beta))^T$ for $U=(\beta,v)$ is
\begin{equation*}
\lambda(U) = 2vB_2(\beta) \pm v\sqrt{B_1(\beta)B_2'(\beta)},\quad B_2'(\beta) = \frac{\rho_1 \rho_2}{\beta^3}
\end{equation*}
and have nonzero imaginary parts except when $\beta = \rho_i$ or $v=0$.
In particular, if $\beta$ is in the interior of {\em physical range} $\rho_1 \leq \beta \leq \rho_2$, then the system is not strictly hyperbolic, which leads to change several properties associated with characteristics. 
Details are stated in \cite{KSS2003}. 
\par
In contrast with (\ref{KK-PDE}), the variable $\beta$ is assumed to be bounded for solutions of (\ref{two-fluid-PDE}) from the physical viewpoint\footnote{
The constraint comes from the fact that $\beta$ is a linear combination of the volume fractions (the sum of these is always $1$) and the densities of phases ($\rho_1$ and $\rho_2$).
} and the applications of full compactifications (namely, nontrivial transformations for all variable) is not suitable for blow-up behavior.
Therefore, when we consider blow-up solutions of (\ref{two-fluid-PDE}) with the physical relevance, it is natural to consider the boundary value problem of ordinary differential equations {\em of the type containing $0$}:
\begin{equation}
\label{two-fluid}
\begin{cases}
\beta' = vB_1(\beta) - c\beta - c_1, & \\
v' =  v^2 B_2(\beta) - cv - c_2, & 
\end{cases}\quad {}'=\frac{d}{d\zeta},\quad \zeta = x-ct,
\end{equation}
\begin{equation*}
\lim_{\zeta \to -\infty}(\beta(\zeta), v(\zeta)) = (\beta_L, v_L),\quad \lim_{\zeta \to +\infty}(\beta(\zeta), v(\zeta)) = (\beta_R, v_R),
\end{equation*}
where $c$ is the speed of traveling waves 
\begin{equation*}
c = \frac{v_R B_1(\beta_R) - v_L B_1(\beta_L)}{\beta_R - \beta_L}
\end{equation*}
and $(c_1,c_2) = (c_{1L}, c_{2L})$ or $(c_{1R}, c_{2R})$ with
\begin{equation*}
\begin{cases}
c_{1L} = v_L B_1(\beta_L) - c\beta_L, & \\
c_{2L} = v_L^2 B_2(\beta_L) -cv_L, & \\
\end{cases}
\quad
\begin{cases}
c_{1R} = v_R B_1(\beta_R) - c\beta_R, & \\
c_{2R} = v_R^2 B_2(\beta_R) -cv_R. & \\
\end{cases}
\end{equation*}
The system (\ref{two-fluid}) is actually the traveling wave (viscous shock) equation of (\ref{two-fluid-PDE}) for $(\beta(t,x), v(t,x)) = (\hat \beta(\zeta), \hat v(\zeta))$.
Easy calculations yield the following property.
\begin{lem} 
\label{lem-AQH-two-fluid}
The vector field in the right-hand side of (\ref{two-fluid}) is defined in a neighborhood of $\{\rho_1\leq \beta \leq \rho_2\}\times \mathbb{R}$ and asymptotically quasi-homogeneous of type $(0,1)$ and order $2$.
\end{lem}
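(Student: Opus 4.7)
The plan is to directly verify the definition of asymptotically quasi-homogeneous vector fields at infinity for $f=(f_1,f_2)$ with $f_1(\beta,v)=vB_1(\beta)-c\beta-c_1$ and $f_2(\beta,v)=v^2B_2(\beta)-cv-c_2$, choosing $\alpha=(0,1)$ so that $I_\alpha=\{2\}$ and $k+1=2$. First I would fix the admissible domain: since $\alpha_1=0$, it must have the product form $U=\tilde U\times\mathbb{R}$, and I take $\tilde U$ to be a bounded open neighborhood of $[\rho_1,\rho_2]$ in $\mathbb{R}$ disjoint from $\beta=0$. Because $\rho_1>0$, the rational expressions $B_1(\beta)=(\beta-\rho_1)(\beta-\rho_2)/\beta$ and $B_2(\beta)=(\beta^2-\rho_1\rho_2)/(2\beta^2)$ are smooth and bounded on $\tilde U$, which gives smoothness of $f$ on $U$ and the uniform boundedness required of the coefficients of the homogeneity variable $v$.

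Next I would propose the candidate quasi-homogeneous part $f_{\alpha,1}(\beta,v):=\bigl(vB_1(\beta),\,v^2B_2(\beta)\bigr)$. A direct scaling check under $(\beta,v)\mapsto(R^0\beta,R^1v)=(\beta,Rv)$ gives
\begin{equation*}
(f_{\alpha,1})_1(\beta,Rv)=R\cdot vB_1(\beta)=R^{k+\alpha_1}(f_{\alpha,1})_1(\beta,v),\qquad (f_{\alpha,1})_2(\beta,Rv)=R^2\cdot v^2B_2(\beta)=R^{k+\alpha_2}(f_{\alpha,1})_2(\beta,v),
\end{equation*}
so $f_{\alpha,1}$ is quasi-homogeneous of type $(0,1)$ and order $k+1=2$ in the sense of Definition \ref{dfn-QHvector-fields}.

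Finally I would verify the limit in the definition of asymptotic quasi-homogeneity. The differences with the leading part are
\begin{align*}
f_1(\beta,Rv)-R\,(f_{\alpha,1})_1(\beta,v) &= -c\beta-c_1,\\
f_2(\beta,Rv)-R^2(f_{\alpha,1})_2(\beta,v) &= -cRv-c_2,
\end{align*}
so multiplying by $R^{-(k+\alpha_j)}$ yields $R^{-1}(-c\beta-c_1)$ and $-cv\,R^{-1}-c_2R^{-2}$, respectively, and both tend to $0$ as $R\to+\infty$. On the set $U_1=\{(\beta,v):\beta\in\tilde U,\ v\in\{\pm 1\}\}$ the convergence is uniform because $\tilde U$ is bounded. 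This completes the verification.

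There is no real obstacle here; the argument is a textbook scaling computation. The only point requiring a little care is the choice of the admissible domain $U$: one must restrict $\beta$ to a bounded neighborhood of $[\rho_1,\rho_2]$ that avoids the singularity of $B_1,B_2$ at $\beta=0$, so that the standing smoothness and boundedness hypotheses of the definition are met and so that $U_1$ is effectively compact in the non-homogeneity coordinate.
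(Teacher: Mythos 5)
Your verification is correct and is exactly the computation the paper has in mind: the paper offers no written proof (it says only "Easy calculations yield the following property"), and your direct check of the scaling identity for the leading part $\bigl(vB_1(\beta),\,v^2B_2(\beta)\bigr)$, the vanishing of the rescaled remainders $R^{-1}(-c\beta-c_1)$ and $-cvR^{-1}-c_2R^{-2}$, and the choice of admissible domain $\tilde U\times\mathbb{R}$ with $\tilde U$ a bounded neighborhood of $[\rho_1,\rho_2]$ away from $\beta=0$ supplies precisely the omitted details.
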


Following Lemma \ref{lem-AQH-two-fluid}, we choose the directional compactification $T$ of type $(0,1)$ : $(\beta, v)\mapsto (x_1, r) = (\beta, v^{-1})$.
Direct calculations yield the following desingularized vector field on $\{r\geq 0\}\times \{\rho_1\leq \beta \leq \rho_2\}$:
\begin{equation}
\label{two-fluid-desing}
\begin{cases}
\displaystyle{\frac{dx_1}{d\tau} = B_1(x_1) - cx_1 r - c_1r}, & \\
\displaystyle{\frac{dr}{d\tau} =  -r\left\{B_2(x_1) - cr - c_2r^2\right\}}, & 
\end{cases}
\end{equation}
where $\tau$ is the desingularized time-scale given by $d\tau = r^{-1}dt$.
Obviously, $(x_1, r) = (\rho_1,0)\equiv p_1$ and $(\rho_2, 0)\equiv p_2$ are equilibria of (\ref{two-fluid-desing}) on the horizon $\mathcal{E} = \{r=0\}$ and the vector field on $\mathcal{E}\setminus \{p_1,p_2\}$ is monotone on each component.
In the coordinate $(x_1,r)$, the desingularized vector field (\ref{two-fluid-desing}) is $C^1$ locally including $\mathcal{E}$, and hence the Jacobian matrices of (\ref{two-fluid-desing}) 
\begin{equation*}
Jg(x_1,r) = 
\begin{pmatrix}
\frac{dB_1}{dx_1} - cr & -cx_1 - c_1\\
-r\frac{dB_2}{dx_1} & -(B_2(x_1) - cr - c_2 r^2) -r\{-c -2 c_2r\}
\end{pmatrix}
\end{equation*}
at $p_1$ and $p_2$ make sense, which are 
\begin{equation*}
Jg(p_1) = 
\begin{pmatrix}
\frac{2\rho_1^2 - \rho_1(\rho_1+\rho_2)}{\rho_1^2} & -c\rho_1\\
0 & -B_2(\rho_1)
\end{pmatrix}, \quad 
Jg(p_2) = 
\begin{pmatrix}
\frac{2\rho_2^2 - \rho_2(\rho_1+\rho_2)}{\rho_2^2} & -c\rho_2\\
0 & -B_2(\rho_2)
\end{pmatrix}
\end{equation*}
and eigenvalues $\{\mu_1,\mu_2\}$ are
\begin{align*}
\mu_1(p_1) &= 2-\frac{(\rho_1 + \rho_2)}{\rho_1} < 0,\quad \mu_2(p_1) = -\frac{1}{2}\left(1-\frac{\rho_2}{\rho_1}\right) > 0,\\
\mu_1(p_2) &= 2-\frac{(\rho_1 + \rho_2)}{\rho_2} > 0,\quad \mu_2(p_2) = -\frac{1}{2}\left(1-\frac{\rho_1}{\rho_2}\right) < 0.
\end{align*}
These results indicate that both $p_1$ and $p_2$ are hyperbolic saddles.
Theorem \ref{thm-stationary-blowup} thus shows that trajectories on $W^s(p_2)$ (in forward time) and $W^u(p_1)$ (in backward time) correspond to blow-up solutions of (\ref{two-fluid}).
Phase portraits of (\ref{two-fluid-desing}) is shown in Figure \ref{fig-phase-two-fluid}.
The equation (\ref{two-fluid-desing}) contains the following sequence of heteroclinic orbits:
\begin{equation*}
W_1 : T(U_L)\to p_2,\quad W_2 : p_2\to p_1,\quad W_3 : p_1\to T(U_R).
\end{equation*}

\begin{figure}[htbp]\em
\begin{minipage}{1\hsize}
\centering
\includegraphics[width=7cm]{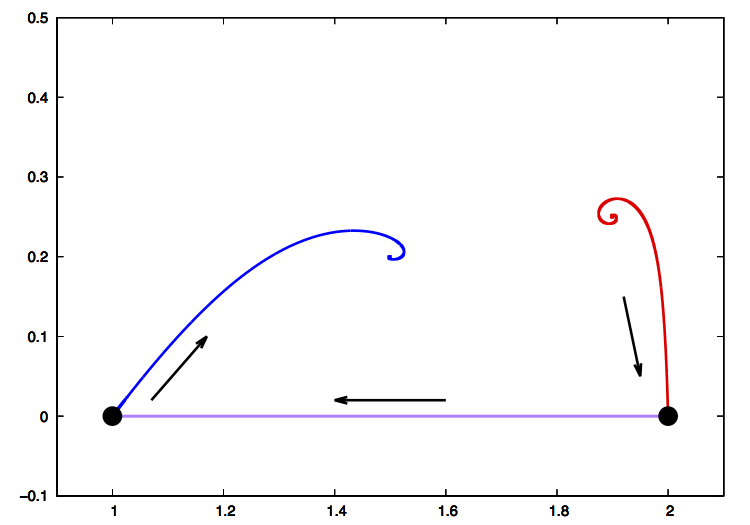}
\end{minipage}
\caption{The sequence $\{W_i\}_{i=1}^3$ for (\ref{two-fluid-desing}) in the coordinate $x = (x_1,r)$}
\label{fig-phase-two-fluid}
$W_1$ : a heteroclinic orbit connecting $x_L = T(U_L) = (1.9, 0.25)$ and $p_2$.
Note that $x_L$ is a source equilibrium of (\ref{KK-ex}) with $(c_1,c_2) = (c_{1L}, c_{2L})$.
$W_2$ : a heteroclinic orbit connecting $p_2$ and $p_1$.
$W_3$ : a heteroclinic orbit connecting $p_1$ and $x_R = T(U_R) = (1.5, 0.2)$.
Note that $x_R$ is a sink equilibrium of (\ref{two-fluid-desing}) with $(c_1,c_2) = (c_{1R}, c_{2R})$.
\end{figure}

\begin{figure}[htbp]\em
\begin{minipage}{0.5\hsize}
\centering
\includegraphics[width=7cm]{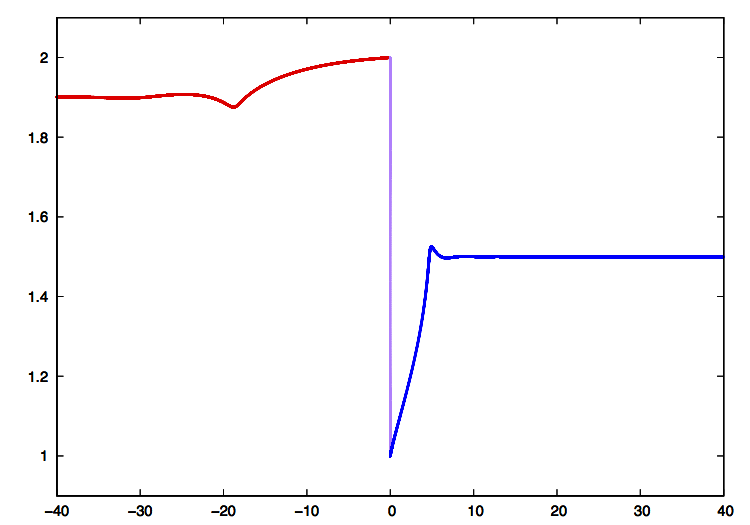}
(a)
\end{minipage}
\begin{minipage}{0.5\hsize}
\centering
\includegraphics[width=7cm]{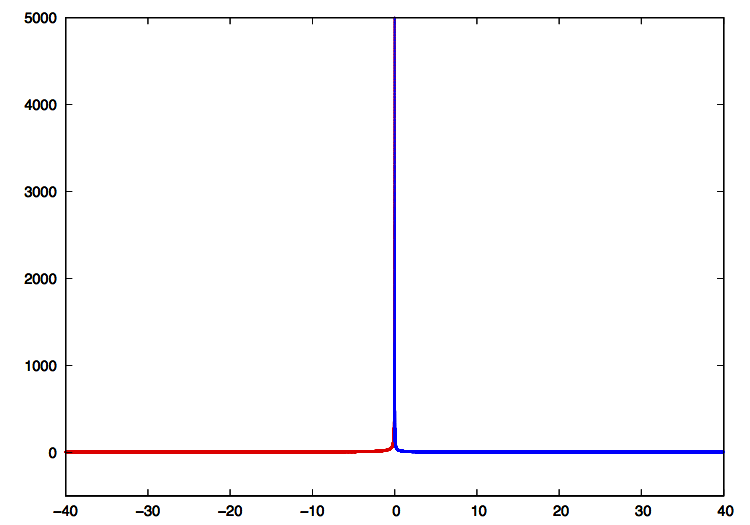}
(b)
\end{minipage}
\caption{Profiles of $(\beta(t,x),v(t,x)) = (\hat \beta\left(\frac{x-ct}{\epsilon r}\right), \hat v\left(\frac{x-ct}{\epsilon r}\right))$ for (\ref{two-fluid-PDE}) corresponding to $\{W_i\}_{i=1}^3$}
\label{fig-profile-two-fluid}
(a) : Profile of $\hat \beta(\frac{x-ct}{\epsilon r})$. (b) : Profile of $\hat v(\frac{x-ct}{\epsilon r})$.
The factor $r$ stems from the time-scale desingularization $d\tau = r^{-1} dt$.
Asymptotic behavior in the outer layer (namely, as $x-ct \to \mp \infty$) corresponds to $W_1$ and $W_3$, respectively.
Signatures of $\beta$ and $v$ in the inner layer (namely, near Dirac delta singularity) reflect transition of $W_2$.
These profiles transit in the positive $x$ direction with the speed $c \approx 1.60964912281$.
\end{figure}

\section{Conclusion}
We have discussed blow-up criteria of differential equations from the viewpoint of dynamical systems and singularities.
As a prototype, we have introduced quasi-Poincar\'{e} compactifications, which is a quasi-homogeneous generalization of (homogeneous) Poincar\'{e} compactifications.
Comparing with the other type of compactifications such as Poincar\'{e}-Lyapunov (PL-)disks, we have shown that the qualitative properties of dynamics at infinity are independent of the choice of compactifications, which indicates that we can choose compactifications suitable for our demand (e.g., stationary or periodic blow-up solutions, or their numerical simulations).
\par
We have also shown that the time-variable desingularization and hyperbolic invariant sets, such as equilibria and periodic orbits, at infinity induce blow-up solutions as well as the specific asymptotic behavior including blow-up rates.
These ideas open the door to analysis of blow-ups and related finite-time singularities from algebraic and geometric viewpoints.
\par
We end our arguments by showing several future prospects of the present discussions.

\subsection*{General quasi-homogeneous compactifications}
In the theory of (admissible) homogeneous compactifications \cite{EG2006}, the Poincar\'{e} compactification is settled into the central one in the sense that the direction at infinity is distinguished and that the degeneracy of points at infinity is removed as possible. 
These compactifications give us global coordinate systems compared with directional compactifications such as PL-disks.
On the other hand, the Poincar\'{e}-type compactifications need calculations of radicals, which may break smoothness of (desingularized) vector fields at infinity, as discussed in Section \ref{section-stationary}.
As for homogeneous compactifications, there are several admissible compactifications, such as parabolic compactifications, such that polynomial vector fields are mapped into {\em rational} ones.
They overcome all difficulties coming from radicals appeared in compactifications for studying dynamics, keeping the presence of global coordinate representations. 
\par
With this in mind, it is natural to consider the quasi-homogeneous analogue of general admissible compactifications which avoids difficulties coming from radicals.
Various properties which quasi-Poincar\'{e} compactifications possess in the present arguments will be the basis on constructing an {\em admissible} class of quasi-homogeneous compactifications.
Admissible quasi-homogeneous compactifications which generalize admissible homogeneous compactifications (e.g., \cite{EG2006}) are our next focuses for constructing general theory and applications to, say rigorous numerics of blow-up solutions mentioned below.

\subsection*{General asymptotics of blow-up solutions}
Our main theorems in Section \ref{section-blow-up} show that hyperbolic equilibria and periodic orbits at infinity admitting nontrivial stable manifolds induce blow-up solutions for original systems.
From the viewpoint of dynamical systems, it is natural to consider blow-up solutions whose asymptotics are followed by general hyperbolic (or more general) invariant sets at infinity.
The key issues for these results are topological conjugacy of dynamics to linearized systems and in-phase (or shadowing) property of invariant sets.
Various studies of invariant sets at infinity will lead to new direction of blow-up analysis for differential equations.

\subsection*{Rigorous numerics of blow-up solutions for asymptotically quasi-homogeneous vector fields}
Our present arguments are motivated in {\em rigorous numerics} of trajectories in dynamical systems with certain singularities, such as blow-up solutions, and Riemann solutions admitting singular shocks in systems of conservation laws (e.g., \cite{S2004}).
Rigorous numerics, mainly based on {\em interval arithmetic}, are ones of numerical computation techniques which encloses all numerical errors such as truncation or rounding errors with appropriate mathematical estimates.
Interval arithmetic enables us to compute enclosures where mathematically correct objects are contained in the phase space.
These enclosures give {\em explicit} error bounds of {\em rigorous} solutions. 
\par
Recently, rigorous numerics are applied to validating blow-up solutions of ODEs by the author and his collaborators \cite{TMSTMO}, which applies (homogeneous) compactifications and Lyapunov functions to enclose rigorous blow-up times of blow-up solutions.
One of directions extending arguments in \cite{TMSTMO} is the application of the present theory, which will lead to validate blow-up solutions of (asymptotically) quasi-homogeneous systems, even for periodic blow-ups and singular shock profiles.

\section*{Acknowledgements}
The author was partially supported by Program for Promoting the reform of national universities (Kyushu University), Ministry of Education, Culture, Sports, Science and Technology (MEXT), Japan, and World Premier International Research Center Initiative (WPI), MEXT, Japan.
He also would like to thank Prof. B. Sandstede for giving him information of several preceding works concerning with dynamics at infinity.

\bibliographystyle{plain}
\bibliography{qh_blow_up}

\end{document}